\documentclass[12pt]{amsart}

\usepackage[utf8]{inputenc}  

\usepackage[T1]{fontenc}
\usepackage[british]{babel}
\usepackage[style=american]{csquotes}

\usepackage{geometry}
\usepackage{amsthm}
\usepackage{amsfonts}
\usepackage{amsmath}
\usepackage{amssymb}
\usepackage{mathtools} 
\usepackage{eucal}

\usepackage[proportional]{libertine}
\usepackage[libertine]{newtxmath}
\useosf 
\usepackage[scaled=0.83]{beramono}
\usepackage[frak=euler,cal=stixplain]{mathalpha}

\makeatletter
\DeclareMathAlphabet{\mathbfit}{\tx@enc}{\rmdefaultB}{b}{it}
\makeatother

\usepackage{microtype}

\usepackage[style=alphabetic,datecirca=true,dateuncertain=true]{biblatex}
\usepackage[simplepostnote]{lumsdaine-biblatex-misc}

\DeclareCiteCommand{\concisecite}
  [\mkbibparens]
  {\usebibmacro{prenote}} 
  {\printtext{%
     \printnames[family]{labelname}%
     \setunit{\addcomma\space}}%
   \printtext{%
     \printfield{title}%
     \setunit{\addspace}%
     \printtext[parens]{%
       \printdate}}}
  {\multicitedelim}
  {\usebibmacro{postnote}} 

\usepackage{xcolor}
\definecolor{darkgreen}{rgb}{0,0.45,0}
\definecolor{darkred}{rgb}{0.75,0,0}
\definecolor{darkblue}{rgb}{0,0,0.6}
\usepackage[colorlinks,citecolor=darkblue,linkcolor=darkblue,urlcolor=darkblue]{hyperref}
\usepackage{zref-clever}
\zcsetup{cap,sort=false,nameinlink}
\newcommand{\cref}[1]{\zcref{#1}}
\newcommand{\Cref}[1]{\zcref[S]{#1}}

\newcounter{mastertheoremcounter}[section]

\newcommand{\newzctheorem}[2]{
  \newtheorem{#1}[mastertheoremcounter]{#2}
    \AddToHook{env/#1/begin}{%
      \zcsetup{countertype={mastertheoremcounter=#1}}}
}

\usepackage[shortlabels]{enumitem}

\makeatletter
\RenewDocumentCommand{\item}{o}{%
  \@inmatherr\item
  \IfNoValueTF{#1}{%
    \@noitemargtrue\@item[\@itemlabel]%
  }{%
    \def\@currentlabel{#1}\@item[#1]\MakeLinkTarget{}%
  }%
}
\makeatother

\usepackage{mathpartir}

\usepackage{graphicx}
\usepackage{tikz}
\usepackage{tikz-cd}

\usepackage{xifthen} 
\usepackage[normalem]{ulem} 
\usepackage[en-GB]{datetime2}
\usepackage{needspace}

\theoremstyle{plain}
\newzctheorem{theorem}{Theorem}
\newzctheorem{proposition}{Proposition}
\newzctheorem{lemma}{Lemma}
\newzctheorem{fact}{Fact}
\newzctheorem{corollary}{Corollary}
\newzctheorem{conjecture}{Conjecture}

\theoremstyle{definition}
\newzctheorem{definition}{Definition}
\newzctheorem{remark}{Remark}
\newzctheorem{convention}{Convention}
\newzctheorem{notation}{Notation}
\newzctheorem{example}{Example}
\newzctheorem{examples}{Examples}
\newzctheorem{question}{Question}
\newzctheorem{caveat}{Caveat}

\zcRefTypeSetup{convention}{
  Name-sg = Convention ,
  name-sg = convention ,
  Name-pl = Conventions ,
  name-pl = conventions ,
  Name-sg-ab = Conv. ,
  name-sg-ab = conv. ,
  Name-pl-ab = Convs. ,
  name-pl-ab = convs. ,
}
\zcRefTypeSetup{caveat}{
  Name-sg = Caveat ,
  name-sg = caveat ,
  Name-pl = Caveats ,
  name-pl = caveats ,
}


\newif\ifshowprivatenotes
\showprivatenotestrue
\AtBeginDocument{\ifshowprivatenotes \GenericWarning{}{Private notes visible.} \fi}
\newcommand{\hideprivate}{\showprivatenotesfalse}

\newcommand{\privatenote}[1]{\ifshowprivatenotes #1 \else \ignorespaces \fi}
\NewDocumentEnvironment{private}{+b}{\ifshowprivatenotes#1\fi}{}

\newenvironment{colored}[1]{\begingroup \color{#1}}{\endgroup}
\newcommand{\coloredprivatenote}[2]{\privatenote{\textcolor{#1}{#2}}}

\colorlet{todocolor}{darkred}
\colorlet{outdatedcolor}{darkblue!70!black!40}
\colorlet{commentcolor}{darkgreen}

\newcommand{\todo}[1]{\coloredprivatenote{todocolor}{#1}}

\newcommand{\comment}[1]{\coloredprivatenote{commentcolor}{#1}}

\NewDocumentEnvironment{longtodo}{+b}%
  {\begin{private}\begin{colored}{todocolor} #1 \end{colored} \end{private}}
  {}
\NewDocumentEnvironment{longoutdated}{+b}%
  {\begin{private}\begin{colored}{outdatedcolor} #1 \end{colored} \end{private}}
  {}
\NewDocumentEnvironment{longcomment}{+b}%
  {\begin{private}\begin{colored}{commentcolor} #1 \end{colored} \end{private}}
  {}

\newcommand{\defemph}[1]{\emph{#1}} 

\newcounter{saveenumi}

\usetikzlibrary{calc}
\usetikzlibrary{fit}
\usetikzlibrary{shapes}
\usetikzlibrary{arrows}
\usetikzlibrary{decorations.markings}
\usetikzlibrary{decorations.pathmorphing}
\usetikzlibrary{positioning}
\usetikzlibrary{intersections}

\tikzset{
  commutative diagrams/arrow style=tikz,
  commutative diagrams/diagrams={row sep=large},
}

\tikzset{cd-style/.style={commutative diagrams/every diagram}}
\tikzset{cd-arrow-style/.style={commutative diagrams/.cd, every arrow, every label}}
\newcommand{\arr}[1][]{\draw[cd-arrow-style,#1]}

\makeatletter
\tikzset{
  label/.style n args={2}{
    edge node={node [
      execute at begin node=\iftikzcd@mathmode$\fi,
      execute at end node=\iftikzcd@mathmode$\fi,
      /tikz/commutative diagrams/.cd,every label,
      #2
      ] {#1}}
  },
  fine-description/.style={
    auto=false,
    inner xsep=0.3ex,
    inner ysep=0pt,
    fill=white
  }
}
\makeatother

\tikzset{fibtip/.tip={Triangle[open,angle=60:4.5pt]}}
\tikzset{tfibtip/.tip={Bar[sep]Triangle[open,angle=45:4pt]}} 

\pgfarrowsdeclaredouble{fibbtip}{fibbtip}{fibtip}{.fibtip}

\pgfarrowsdeclarealias{c}{c}{left hook}{right hook}
\pgfarrowsdeclarealias{c'}{c'}{right hook}{left hook}

\tikzset{sup/.style={label={#1}{auto=left,pos=1}}}  
\tikzset{sub/.style={label={#1}{auto=right,pos=1}}}

\tikzset{fib/.code={\pgfsetarrowsend{fibtip}}}
\tikzset{fibb/.code={\pgfsetarrowsend{fibbtip}}}
\tikzset{tfib/.code={\pgfsetarrowsend{tfibtip}}}
\tikzset{inj/.code={\pgfsetarrowsstart{c}}}
\tikzset{inj'/.code={\pgfsetarrowsstart{c'}}}
\tikzset{cover/.style={->>}}
\tikzset{dec/.style={sub={\mathrm{dec}}}}
\tikzset{strenum/.style={}}
\tikzset{tcof/.style={tail}}
\tikzset{zigzag/.style={commutative diagrams/rightsquigarrow}}
\tikzset{lw/.style={"lw"{#1}},lw/.default={very near end}}
\tikzset{lw'/.style={"lw"'{#1}},lw'/.default={very near end}}
\tikzset{weq/.style={"\sim"}}
\tikzset{weq'/.style={"\sim"'}}

\newcommand{\generalto}[2]{ \mathrel{\mkern-1mu
  \tikz[baseline={([yshift=-0.58ex]a.south)}]{%
    \node[minimum width=1em,align=center,inner xsep=0.5ex,inner ysep=0.15ex] (a) {$\scriptstyle #2$};
    \arr[#1] (a.south west) -- (a.south east);}
 \mkern-1mu}}

\newcommand{\generalfrom}[2]{ \mathrel{\mkern-1mu
  \tikz[baseline={([yshift=-0.58ex]a.south)}]{%
    \node[minimum width=1em,align=center,inner xsep=0.5ex,inner ysep=0.15ex] (a) {$\scriptstyle #2$};
    \arr[#1] (a.south east) -- (a.south west);}
  \mkern-1mu}}

\renewcommand{\to}[1][]{ \generalto{->}{#1} }
\newcommand{\from}[1][]{ \generalfrom{->}{#1} }

\newcommand{\injto}[1][]{ \generalto{->,inj}{#1} }

\newcommand{\coverto}[1][]{ \generalto{->>}{#1\ \,} }

\makeatletter
\newcommand{\weqto}[1][]{\@ifmtarg{#1}{\generalto{->}{\sim}}{\NotYetDefined}}
\newcommand{\weqfrom}[1][]{\@ifmtarg{#1}{\generalfrom{->}{\sim}}{\NotYetDefined}}
\makeatother
\newcommand{\zigzagto}[1][]{ \generalto{zigzag}{#1} }

 \NewDocumentCommand{\parpair}{O{1.6em}mO{}O{}mO{}O{}}{%
  \mathrel{\mkern-1mu
  \tikz[baseline={([yshift=-0.58ex]a.south)}]{%
    \node[minimum width={#1},inner xsep=0.5ex,inner ysep=0.15ex] (a) {$ $};
    \arr[#4] ([yshift= 0.35ex]a.south west) \IfBlankTF{#2}{to}{to[label={#2}{fine-description,yshift=0.15ex,#3}]} ([yshift= 0.35ex]a.south east);
    \arr[#7] ([yshift=-0.35ex]a.south west) \IfBlankTF{#5}{to}{to[label={#5}{fine-description,yshift=-0.1ex,#6}]} ([yshift=-0.35ex]a.south east);
    }
 \mkern-1mu}}

\tikzset{drpb/.style={commutative diagrams/.cd, phantom, "\lrcorner", very near start}}

\ExplSyntaxOn
\NewDocumentCommand{\mathcalit}{m}
{
  \tl_map_function:nN {#1} \mathcalit_char:n
}
\cs_new_protected:Nn \mathcalit_char:n
{
  \str_if_eq:eeTF {#1} {\str_lowercase:n {#1}}
  { \mathit{#1} } 
  { \mathcal{#1} } 
}

\NewDocumentCommand{\mathbfcalit}{m}
{
  \tl_map_function:nN {#1} \mathbfcalit_char:n
}
\cs_new_protected:Nn \mathbfcalit_char:n
{
  \str_if_eq:eeTF {#1} {\str_lowercase:n {#1}}
  { \mathbfit{#1} } 
  { \mathbfcal{#1} } 
}

\ExplSyntaxOff

\newcommand{\ccK}{\mathcal{K}} 

\newcommand{\ordcatletter}[1]{\mathcal{#1}}
\newcommand{\cB}{\ordcatletter{B}}
\newcommand{\cC}{\ordcatletter{C}}
\newcommand{\cD}{\ordcatletter{D}}
\newcommand{\cE}{\ordcatletter{E}}
\newcommand{\cF}{\ordcatletter{F}}
\newcommand{\cG}{\ordcatletter{G}}
\newcommand{\cI}{\ordcatletter{I}}
\newcommand{\cM}{\ordcatletter{M}}

\newcommand{\cX}{\ordcatletter{X}}

\newcommand{\indcatletter}[1]{\mathbfcal{#1}}
\newcommand{\indC}{\indcatletter{C}}
\newcommand{\indD}{\indcatletter{D}}

\newcommand{\indU}{\indcatletter{U}}

\newcommand{\smallcatletter}[1]{\mathrm{#1}}
\newcommand{\smC}{\smallcatletter{C}}
\newcommand{\smD}{\smallcatletter{D}}
\newcommand{\smE}{\smallcatletter{E}}

\newcommand{\smU}{\smallcatletter{U}}
\newcommand{\smG}{\smallcatletter{G}}

\newcommand{\Gbar}{\overline{G}}
\newcommand{\smGbar}{\overline{\smG}}

\newcommand{\thT}{\mathsf{T}}

\newcommand{\skS}{\mathbf{S}}

\newcommand{\pow}{P}
\newcommand{\syncat}[1]{\smC_{#1}} 
\newcommand{\N}{\mathbb{N}} 
\newcommand{\Ub}{U} 
\newcommand{\Ut}{\tilde{U}} 

\newcommand{\hugetwocatfont}[1]{\mathcal{\uppercase{#1}}}
\newcommand{\twocatfont}[1]{\mathcalit{#1}}
\newcommand{\catfont}[1]{\mathrm{#1}}

\makeatletter
\newcommand{\hugeTwoCatGeneral}[2]{\hugetwocatfont{#1}\@ifnotmtarg{#2}{_{#2}}}
\newcommand{\CAT}[1][]{\hugeTwoCatGeneral{C{\mkern-1.5mu}AT\@ifnotmtarg{#1}{\mkern-3.5mu}}{#1}}
\newcommand{\LFP}[1][]{\hugeTwoCatGeneral{LFP}{#1}}
\newcommand{\REG}[1][]{\hugeTwoCatGeneral{REG}{#1}}
\newcommand{\EX}[1][]{\hugeTwoCatGeneral{E{\mkern-1mu}X}{#1}}
\newcommand{\LEX}[1][]{\hugeTwoCatGeneral{LE{\mkern-1mu}X}{#1}}
\newcommand{\LEXT}[1][]{\hugeTwoCatGeneral{LE{\mkern-1mu}XT}{#1}}
\newcommand{\REGLEXT}[1][]{\hugeTwoCatGeneral{REGLE{\mkern-1mu}XT}{#1}}
\newcommand{\PRETOP}[1][]{\hugeTwoCatGeneral{PRET{\mkern-3mu}OP}{#1}}
\newcommand{\LOCOS}[1][]{\hugeTwoCatGeneral{LOC}{#1}}
\newcommand{\REGLOC}[1][]{\hugeTwoCatGeneral{REGLOC}{#1}}
\newcommand{\AU}[1][]{\hugeTwoCatGeneral{AU}{#1}}

\NewDocumentCommand{\LOG}{O{}O{}}
  {\@ifnotmtarg{#1}{#1\text{-}}\hugeTwoCatGeneral{LOG}{#2}}

\newcommand{\twoCatGeneral}[2]{\twocatfont{#1}\@ifnotmtarg{#2}{_{#2}}}
\newcommand{\Cat}[1][]{\twoCatGeneral{C{\mkern-2mu}at}{#1}}
\newcommand{\Lex}[1][]{\twoCatGeneral{L{\mkern-2mu}ex}{#1}}
\newcommand{\Reg}[1][]{\twoCatGeneral{R{\mkern-2mu}eg}{#1}}
\newcommand{\Log}[1][]{\@ifnotmtarg{#1}{#1\text{-}}\twocatfont{L{\mkern-2mu}og}}

\newcommand{\intCatGeneral}[2]{\catfont{#1}\@ifnotmtarg{#2}{(#2)}}
\newcommand{\intCat}[1][]{\intCatGeneral{Cat}{#1}} 
\newcommand{\intReg}[1][]{\intCatGeneral{Reg}{#1}}
\NewDocumentCommand{\intLog}{O{}O{}}
  {\@ifnotmtarg{#1}{#1\textrm{-}}\intCatGeneral{Log}{#2}}

\newcommand{\strCatGeneral}[2]{\catfont{#1}_\str\@ifnotmtarg{#2}{(#2)}}
\newcommand{\strCat}[1][]{\strCatGeneral{Cat}{#1}}

\newcommand{\strReg}[1][]{\strCatGeneral{Reg}{#1}}
\NewDocumentCommand{\strLog}{O{}O{}}
  {\@ifnotmtarg{#1}{#1\textrm{-}}\strCatGeneral{Log}{#2}}
\newcommand{\Set}{\catfont{Set}}

\newcommand{\Mod}[2][]{\catfont{Mod}\@ifnotmtarg{#1}{_{#1}}(#2)} 

\newcommand{\@F}{\mathcal{F}} 
\NewDocumentCommand{\freeTop}{ O{} O{}}
  {\@ifmtarg{#1}
    {\@ifmtarg{#2}
      {\@F}
      {\@F^{#2}}
    }
    {\@ifmtarg{#2}
      {\@F_{#1}}
      {\@F^{#2}_{#1}}
    }
  }
\makeatother

\newcommand{\colim}{\varinjlim}
\renewcommand{\lim}{\varprojlim}

\newcommand{\comma}[2]{({#1} \mathbin{\downarrow} {#2})} 
\newcommand{\isocomma}[2]{({#1} \mathbin{\downarrow_{\iso}} {#2})} 

\newcommand{\extcat}[1]{\left[ #1 \right]} 

\newcommand{\toposcore}[1]{{#1}^\mathrm{rk}} 

\newcommand{\restrictindexed}[2]{#1{\restriction_{#2}}} 

\newcommand{\indhomwithcatsubscript}[5]{#2_{#3}(#4,#5)_{#1}}
\newcommand{\indhomwithunderline}[5]{\underline{#2_{#3}}(#4,#5)}
\newcommand{\indhomwithHom}[5]{\underline{\operatorname{Hom}}_{#3}(#4,#5)}
\newcommand{\indhomwithboldparens}[5]{#2_{#3} \boldsymbol{(}#4,#5\boldsymbol{)}}
\newcommand{\indhom}[5][]{\indhomwithcatsubscript{#1}{#2}{#3}{#4}{#5}}

\newcommand{\indyon}[1]{\mathbf{y}_{#1}} 

\newcommand{\NNO}{N} 
\newcommand{\List}{\mathop{\mathit{List}}} 

\newcommand{\Pred}{\operatorname{Pred}} 

\newcommand{\op}{\mathrm{op}}
\DeclareMathOperator{\ob}{ob}
\DeclareMathOperator{\mor}{mor}

\newcommand{\id}{\mathrm{id}}

\newcommand{\full}{\mathrm{full}}
\newcommand{\exlex}[1]{{#1}_{\mathrm{ex/lex}}}
\newcommand{\exreg}[1]{{#1}_{\mathrm{ex/reg}}}
\newcommand{\reglex}[1]{{#1}_{\mathrm{reg/lex}}}
\DeclareMathOperator{\Numeral}{num}

\newcommand{\str}{\mathit{str}}

\newcommand{\defeq}{\coloneqq}

\newcommand{\Iff}{\Leftrightarrow}
\newcommand{\Imp}{\Rightarrow}

\newcommand{\iso}{\cong}
\renewcommand{\equiv}{\simeq}

\newcommand{\functorin}[1]{\mathrel{\raisebox{0.065ex}{\textsc{\lowercase{#1}}}\mkern-0.5mu{\in}}}

\newcommand{\bigland}{\bigwedge}

\newcommand{\proves}{\vdash}

\newcommand{\of}[1][2]{\mspace{#1 mu plus 1mu minus 1mu}\mathord{:}\mspace{#1 mu plus 1mu minus 1mu}}

\newcommand{\lforall}[2][\ ]{\forall\mkern1mu #2 #1}
\newcommand{\lexists}[2][\ ]{\exists\mkern2mu #2 #1}

\newcommand{\theoryacronymfont}[1]{\mathrm{#1}}
\newcommand{\internalsyntaxfont}[1]{\mathsf{#1}}
\makeatletter
\newcommand{\IHOL}[1][]{\ensuremath{\theoryacronymfont{IHOL}_{\NNO}\@ifnotmtarg{#1}{^{#1}}}}
\newcommand{\IHOLint}[1][]{\ensuremath{\internalsyntaxfont{IHOL}_{\internalsyntaxfont{N}}\@ifnotmtarg{#1}{^{#1}}}}
\newcommand{\HAS}{\ensuremath{\theoryacronymfont{HAS}}}
\newcommand{\HAH}{\ensuremath{\theoryacronymfont{HAH}}}
\newcommand{\HAomega}{\ensuremath{\theoryacronymfont{HA}^\omega}}
\makeatother

\newcommand{\rulefont}[1]{\mathrm{#1}}
\newcommand{\RCC}{\rulefont{RC}_{\omega}}
\newcommand{\RDC}{\rulefont{RC}_{\rulefont{dep}}}
\newcommand{\RCFT}{\rulefont{RC}_{\rulefont{FT}}}
\newcommand{\EP}{\rulefont{EP}}
\newcommand{\DP}{\rulefont{DP}}

\newcommand{\suchthat}{\mathrel{\mid}}

\newcommand{\lscott}{\mathopen{[\mkern-4mu[}}
\newcommand{\rscott}{\mathclose{]\mkern-4mu]}}

\makeatletter
\NewDocumentCommand{\HOLinterp}{ O{-} O{}}
  {\lscott\, #1 \,\rscott\@ifnotmtarg{#2}{^{#2}}}
\NewDocumentCommand{\AUinterp}{ O{-} O{}}
  {\lscott\, #1 \,\rscott\@ifnotmtarg{#2}{^{#2}}}
\NewDocumentCommand{\EATinterp}{ O{-} O{}}
  {\left[\, #1 \,\right]\@ifnotmtarg{#2}{^{#2}}}
\NewDocumentCommand{\FormalContext}{ O{-} O{}}
  {\left\{\, #1 \,\right\}\@ifnotmtarg{#2}{^{#2}}}
\makeatother

\DeclareFontFamily{OMX}{MnSymbolE}{}
\DeclareSymbolFont{MnLargeSymbols}{OMX}{MnSymbolE}{m}{n}
\SetSymbolFont{MnLargeSymbols}{bold}{OMX}{MnSymbolE}{b}{n}
\DeclareFontShape{OMX}{MnSymbolE}{m}{n}{
    <-6>  MnSymbolE5
   <6-7>  MnSymbolE6
   <7-8>  MnSymbolE7
   <8-9>  MnSymbolE8
   <9-10> MnSymbolE9
  <10-12> MnSymbolE10
  <12->   MnSymbolE12
}{}
\DeclareFontShape{OMX}{MnSymbolE}{b}{n}{
    <-6>  MnSymbolE-Bold5
   <6-7>  MnSymbolE-Bold6
   <7-8>  MnSymbolE-Bold7
   <8-9>  MnSymbolE-Bold8
   <9-10> MnSymbolE-Bold9
  <10-12> MnSymbolE-Bold10
  <12->   MnSymbolE-Bold12
}{}
\DeclareMathDelimiter{\ulcorner}
    {\mathopen}{MnLargeSymbols}{'036}{MnLargeSymbols}{'036}
\DeclareMathDelimiter{\urcorner}
    {\mathclose}{MnLargeSymbols}{'043}{MnLargeSymbols}{'043}

\newcommand{\mathquote}[1]{\left\ulcorner \kern-0.1em #1 \kern-0.1em \right\urcorner}

\title[Projectivity of $\NNO$ in the free topos]{Makkai's lost proof of projectivity of $\NNO$ \\ in the free topos}

\author[H.~Forssell]{Henrik Forssell}
\address{Institutt for informatikk\\ Universitetet i Oslo\\ Oslo, Norway}

\author[P.~LeF.~Lumsdaine]{Peter LeFanu Lumsdaine}
\address{Matematiska institutionen\\ Stockholms universitet\\Stockholm, Sweden}

\author[A.~W.~Swan]{Andrew W Swan}
\address{Fakulteta za matematiko in fiziko\\Univerza v Ljubljani\\Ljubljana, Slovenia}

\IfFileExists{freeze-date.tex}%
  {\date{\DTMdisplaydate{2026}{05}{01}{5}%
}}
  {\date{\textcolor{todocolor}{\today}}\GenericWarning{}{Date not frozen, using today}}

\hideprivate 

\begin{document}

\begin{abstract}
  We give a categorical proof of the projectivity of $\NNO$ in the free topos
  --- in proof-theoretic terms, the rule of countable choice for intuitionistic higher-order logic ---
  based on the unpublished proof of Michael Makkai \cite{makkai:n-is-projective}.
  
  The presentation aims to be self-contained and accessible to any reader acquainted with elementary toposes and their logic.
  
  \comment{Key technical tools include \emph{ranked toposes}, corresponding to the finite-order fragments of higher-order logic; \emph{arithmetic universes}, for the internalisation of syntax/free structured categories; and \emph{indexed categories}, for handling the interplay of externalisation/internalisation.} \comment{Cut this para?}
\end{abstract}

\maketitle

\setcounter{secnumdepth}{2}
\setcounter{tocdepth}{1}
\begin{private}
\setcounter{tocdepth}{2}
\end{private}

\tableofcontents

\section*{Introduction}

In 1986, Lambek and Scott wrote \cite[Hist.\ comments on \textsection II.20]{lambek-scott:intro}:

\begin{quote}
  The question in the text concerning the projectivity of N in the free topos, equivalently, the countable rule of choice in pure type theory, has of course been solved by proof-theoretical techniques (see Friedman and Scedrov 1983).
  Makkai in unpublished notes (1980) cast the logician's proof into a categorical (actually $2$-categorical) mould.
\end{quote}
Unfortunately, neither of the cited proofs ever saw the light of day, and the proofs have generally been considered lost \cite{van-oosten:crosilla-schuster-review}.

In December 2015, Michael Makkai visited Stockholm, where the first two authors were then postdocs.
We pressed him on this topic; once back home, he unearthed his original (marvellously lucid) notes and kindly gave us his blessing to write up the proof for eventual publication.
After barely another decade, here it is.

In the end, our treatment diverges substantially from Makkai’s original ---
primarily due to the subsequent development of the field, partly of course just from personal taste, and finally from the desire to give as self-contained and elementary an exposition as possible.
In particular, we hope this should be accessible for essentially anyone familiar with elementary toposes and a little of their basic development.

\subsection*{Proof sketch}

Like many constructive systems, \defemph{intuitionistic higher-order logic} ($\IHOL$), the logic of elementary toposes, enjoys various meta-theorems saying that existence proofs can be distilled into constructions.
Most fundamental is the \defemph{existence property} ($\EP$): if $\IHOL$ proves \enquote{$\lexists{x \of A}{\varphi(x)}$}, then there is some $\IHOL$-definable $a \of A$ for which $\IHOL$ proves \enquote{$\varphi(a)$}.

Categorically, this says that in the free topos, every cover of the terminal object $\HOLinterp[ x \of A \suchthat \varphi(x)] \coverto 1$ splits; that is, $1$ is projective.
Freyd’s famous gluing argument gives a very clean, purely categorical proof of this fact.

Our main goal is to strengthen this, to show that the \defemph{rule of countable choice} ($\RCC$) is admissible for $\IHOL$.
Concretely, if $\IHOL$ proves \enquote{$\lforall{n \of N} \lexists{x \of X} \varphi(n,x)$}, then there is some $\IHOL$-definable function $f : N \to X$ for which $\IHOL$ proves \enquote{$\lforall{n \of N}\varphi(n,f(n))$}.
Categorically again, this says that in the free topos, every cover of the natural numbers object $\NNO$ splits; in other words, $\NNO$ is projective.

Certainly, if $\IHOL$ proves \enquote{$\lforall{n} \lexists{x} \varphi(n,x)$}, the existence property tells us that for each (actual) natural number $n$, there is some IHOL-definable $a$ such that $\IHOL$ proves \enquote{$\varphi(\bar{n},a)$}, where $\bar{n}$ is the $n$th numeral.

It is not hard to leverage this a bit further, to an operation instead of an existence statement:
syntax and proofs can be Gödel-numbered, hence well-ordered, so we can define for each $n$ some specific $a_n$ (Gödel-number-minimal, say) such that $\IHOL$ proves \enquote{$\varphi(\bar{n},a_n)$}.
However, this operation is still defined \emph{externally}, not in $\IHOL$ as desired.

So one might next hope to run the entire above argument not in our ambient meta-theory, but \emph{internally} to $\IHOL$, as an argument about a internal formalisation of $\IHOL$ inside itself — denote this $\IHOLint$ to distinguish it.
Then, if $\IHOL$ proves \enquote{$\lforall{n} \lexists{x} \varphi(n,x)$}, we argue as follows.
First, we internalise that proof to show that $\IHOL$ proves \enquote{$\IHOLint$ proves ‘$\lforall{n} \lexists{x} \varphi(n,x)$’}.
Then, we would like to run the gluing argument and the Gödel-numbering trick inside $\IHOL$, to define an operation $n \mapsto a_n$ such that $\IHOL$ proves \enquote{$\lforall[]{n}$, $\IHOLint$ proves ‘$\varphi(\bar{n},a_n)$’}.
Finally, we externalise these latter proofs to get that in fact $\IHOL$ proves \enquote{$\lforall{n} \varphi(n,a_n)$}.

Unfortunately, that cannot quite go through as written.
The alarm bells ring most loudly in the last step: it attempts to use the inference \enquote{if $\IHOLint$ proves ‘$\varphi(\bar{n},a_n)$’, then $\varphi(n,a_n)$ holds}.
But this is the sort of thing which Tarski’s undefinability of truth tells us cannot be provable or even expressible within $\IHOL$ (presuming it is consistent).
The same problem arises in internalising Freyd’s gluing argument, since it requires interpreting $\IHOLint$ internally into a category involving the ambient sets of $\IHOL$.

However, the attempt can be salvaged.
Instead of attempting to internalise the whole of $\IHOL$, we internalise just its $r$th-order fragment $\IHOL[r]$, for some $r \in \N$, and interpret that.
This takes care, but can be done: $\IHOL[r]$ is restricted enough that one can build, in $\IHOL$, a large enough set-indexed family of sets to interpret it.
This is the \emph{reflection theorem}: $\IHOL$ can construct the standard interpretation of each of its fragments $\IHOL[r]$.

Now we can push through the full argument for $\RCC$.
If $\IHOL$ proves \enquote{$\lforall{n} \lexists{x} \varphi(n,x)$}, then by compactness some finite-rank fragment $\IHOL[r]$ suffices for the proof.
Internalising this, $\IHOL$ proves \enquote{$\IHOLint[r]$ proves ‘$\lforall{n} \lexists{x} \varphi(n,x)$’}.
So we can run the gluing/Gödel-numbering argument for $\IHOLint[r]$ inside $\IHOL$, to define an operation $n \mapsto a_n$ such that $\IHOL$ proves \enquote{$\lforall[]{n}$, $\IHOLint[r]$ proves $\varphi(\bar{n},a_n)$}.
Finally, we hit these with the internal interpretation of $\IHOLint[r]$, and get that $\IHOL$ proves \enquote{$\lforall{n} \varphi(n,\HOLinterp[a_n])$} — a definable witness function for the original universal-existential statement, as desired.

We promised a category-theoretic proof, though — and the above sketch is couched mostly in proof-theoretic language!
Where are the categories?

This is where we admit to a bait-and-switch: 
the proof-theoretic presentation is clearer and more elementary to sketch, but we will not actually carry it through any further.
The categorical version requires a little work to set up, but provides a powerful organising framework that makes the details much easier to carry through rigorously and with good generality.

Recasting the above sketch categorically, we introduce a notion of \emph{$r$-ranked topos} — roughly, a category of sets admitting $r$ iterations of the power-set operation — stratifying the theory of elementary toposes analogously to the finite-rank fragments $\IHOL[r] \subseteq \IHOL$.
We then argue as follows:

Any cover $e : A \coverto \NNO$ in the free topos $\freeTop$ lifts, by compactness, to the free $r$-topos $\freeTop[r]$ for some $r \in \N$.
This then internalises to the \emph{internal} free $r$-topos $\freeTop[r][\freeTop]$ in $\freeTop$.
Topos logic is strong enough to build an internal universe closed under $r$ iterations of power-sets, and so to construct the standard interpretation for the internal free $r$-topos and carry out Freyd’s gluing argument.
It follows that $\freeTop$ has a map from $\NNO$ giving, for each $n$, a global section of the fibre $A_{\bar{n}} \subseteq A$ in $\freeTop[r][\freeTop]$.
Applying to this the internal standard interpretation $\freeTop[r][\freeTop] \to \freeTop$ yields the desired section of $e : A \coverto \NNO$ in $\freeTop$.

\subsection*{Outline}

We start in \zcref{sec:prelims} by collecting a grab-bag of background material as required for later chapters, and fixing notation and terminology for the main notions we work with:
first some general categorical logic,
then some basics of essentially algebraic (aka cartesian) theories,
and lastly the construction of the free topos itself.

In \zcref{sec:ranked-toposes}, we define \emph{ranked toposes}, categorical analogues of the bounded-rank fragments of higher-order logic, and develop their essential properties.

In \zcref{sec:indexed-ranked-toposes}, we then develop the results on \emph{internal} ranked toposes required for the main proof;
we use the framework of \emph{indexed categories} to handle the interaction between external and internal ranked toposes,
and of \emph{locoses/arithmetic universes} to construct and analyse internal free ranked toposes.
The results there on exact completion and free models in arithmetic universes may be of independent interest (\zcref{thm:ex-of-locos-is-au}, \zcref{cor:au-godel-numbering}).

With this machinery in place, \zcref{sec:reflection-results} gives the endgame, putting together the main reflection results for projectivity of $\NNO$ and admissibility of $\RCC$ as sketched above (\zcref{thm:n-is-projective-in-f}, \zcref{cor:ihol-rcc}), along with analogues for \emph{dependent} choice (\zcref{thm:total-graph-branch-in-f}, \zcref{cor:ihol-rdc}).
Finally, we look back over the proof, and discuss possible alternative approaches and related literature.

\subsection*{Acknowledgements}

This article has benefited greatly from discussions with — among others — Phil Scott, Mike Shulman, Steve Vickers, Milly Maietti, Paul Taylor, Andrej Bauer, Davide Perinti, and the regular members of the Stockholm Logic Seminar, particularly Erik Palmgren, Johan Lindberg, Håkon Gylterud, Christian Espíndola, and Sina Hazratpour. \todo{Anyone more? Or cut down a little? Hard to draw the line.}
Above all, we are grateful to Michael Makkai for his original notes and his blessing for preparing this exposition.

During its (long) preparation, the authors have been supported at various points by the Swedish Research Council project grant 2015-03835 (PI Erik Palmgren), Research Council of Norway grant 230525, the Knut and Alice Wallenberg Foundation project “Type
Theory for Mathematics and Computer Science” (PI Thierry Coquand), the Air Force Office of Scientific Research grant numbers FA9550-21-1-0009 and FA9550-21-1-0024\footnote{Any opinions, findings and conclusions are those of the authors and do not necessarily reflect the views of the United States Air Force.} \todo{other long-term grants?} and  the COST Action CA20111 EuroProofNet, funded by European Cooperation in Science and Technology, \href{http://www.cost.eu}{\nolinkurl{www.cost.eu}}

\section{Preliminaries} \label{sec:prelims}

We begin by recalling some background material, tailored to the forms we will require.
\Cref{sec:cat-thy-prelims,sec:eats-prelims} cover general category theory and essentially algebraic theories, respectively;
these serve mostly to fix notation and note a few theorems we require, so the impatient reader may safely skip them.
\Cref{sec:free-topos-prelims} introduces the free topos and Freyd’s gluing argument; this serves additionally as a warm-up for the ranked versions of these in \cref{sec:ranked-toposes,sec:indexed-ranked-toposes}.
Two further topics, indexed categories and arithmetic universes, we leave until they appear later in the story.

\subsection{General category theory} \label{sec:cat-thy-prelims}

We assume some background in general category theory — essentially, familiarity with the notions involved in \emph{elementary toposes} and their basic theory.
The early chapters of any text in topos theory should certainly suffice --- \cite[\textsection A1]{johnstone:elephant}, say.
For everything beyond this, we recall at least in outline the necessary definitions and results.

In most choices of definitions and terminology, we follow the Elephant \cite{johnstone:elephant}.
One peculiarity in particular deserves setting out from the start:

\begin{convention}[{\cite[\textsection A1.2]{johnstone:elephant}}] \label{convention:chosen-structure}
  Whenever we speak of structures satisfying existence conditions (e.g.~categories with limits, essentially surjective functors), we mean these to include a choice of witnesses for the existence (chosen limits, chosen lifts-up-to-isomorphism, and so on).
  We do not however ask morphisms to preserve these choices, unless explicitly specified.
\end{convention}

Assuming the Axiom of Choice, this is of course inconsequential.
Our main motivation is for uniformity with \emph{internal} category theory, where the chosen-structure version is almost always advantageous since it presents structured categories as essentially algebraic notions.

With this in mind, the main classes of logically-structured categories we will use throughout are:
\begin{definition}\label{def:basiccategories}
  A category is: 
  \begin{enumerate}
  \item \defemph{cartesian} (aka \defemph{left exact}, \defemph{lex}, \defemph{finite-limit}, \defemph{finitely complete}) if it has all finite limits;
  \item \defemph{lextensive} if it has finite limits, and finite coproducts, disjoint and stable under pullback;
  \item \defemph{regular} if it has finite limits and image factorisations, and covers are pullback-stable (in the sense of \cref{def:covers} below);
  \item \defemph{exact} if it has finite limits, and pullback-stable effective quotients of equivalence relations;
  \item a \defemph{pretopos} if it is lextensive and exact;
  \item an \defemph{elementary topos with NNO}, or just \defemph{topos}, if it is regular and additionally has power-objects and a natural numbers object\footnote{In most literature, \emph{topos} is defined without NNO by default, but the \emph{free topos} means the free topos-with-NNO. In light of the latter, we take the version with NNO as fundamental.}.
\end{enumerate}
\end{definition}

(A few more notions — locally finitely presentable categories, locoses, arithmetic universes — we will recall as and when we need them.)

Reading these according to \Cref{convention:chosen-structure}, each kind of category comes equipped with operations providing the assumed categorical structure.
So there are always two levels at which a functor $F : \cC \to \cD$ between such categories may preserve the logical structure:
\defemph{weakly} (the default sense), in that limits (colimits, power-objects, etc) retain their universal property under $F$, or equivalently that the canonical comparison maps $F(\lim_i X_i) \to \lim_i FX_i$ are isomorphisms;
or \defemph{strictly}, in that the functor commutes on the nose with the structure operations: $F(\lim_i X_i) = \lim_i FX_i$.

\begin{definition}
  A functor $F : \cC \to \cD$ between cartesian categories (resp.~regular categories, toposes, etc.) is \defemph{cartesian} (resp.~regular, logical, etc.)\ if it (weakly) preserves the assumed structure, and \defemph{strictly cartesian} (resp.~strictly regular, etc.)\ if it preserves the chosen structure on the nose.
\end{definition}

We will often have to deal with the interplay of weakly and strictly structure-preseving functors, along with $2$-categorical issues and sometimes size issues; so we systematically distinguish several ($2$-)categories of these logically-structured categories.

\begin{definition} \label{def:cats-of-cats} \needspace{\baselineskip} 
  We write:
  \begin{enumerate}
  \item $\strReg$, $\Reg$, and $\REG$ respectively for the ($1$-)category of small regular categories and \emph{strictly} regular functors; the $2$-category of small regular categories, regular functors, and natural transformations; the $2$-category of locally small regular categories, regular functors, and natural transformations;
  \item $\strCat$, $\Cat$ and $\CAT$ for the corresponding $1$- and $2$-categories of categories (in this case, there is no difference between strict and non-strict functors, but $\strCat$ still differs from $\Cat$ in omitting $2$-cells);
  \item $\LEX$, $\LEXT$, $\EX$, and so on for the corresponding $2$-categories of cartesian, lextensive, exact, etc.~categories, the corresponding structure-preserving functors, and natural transformations (for these, we never need the strict versions);
  \item $\strLog$, $\Log$ and $\LOG$ for the corresponding $1$- and $2$-categories of toposes, logical functors (strict for $\strLog$), and natural \emph{isomorphisms} (omitted in $\strLog$).
  \end{enumerate}
\end{definition}

Viewing a small regular category (or topos, etc.)\ as an object of $\Reg$ (resp.~$\Log$, etc.), its specific choice of logical structure is inessential, since even isomorphisms in $\Reg$ need not preserve this.
Viewing it as an object of $\strReg$, however, the choice matters: different choices of regular structure on a category may yield non-isomorphic objects of $\strReg$.
We will therefore speak of \emph{strict} regular categories (toposes, etc.)~to emphasise when we view them as objects of the strict category.

\begin{caveat} \label{caveat:strict-cats}
  The strict notions are rather sensitively presentation-depend\-ent:
  for instance, common alternative definitions of toposes such as \enquote{regular + power-objects} and \enquote{cartesian closed + subobject classifier} yield equivalent $2$-categories $\LOG$, but inequivalent $1$-categories $\strLog$.

  For this reason, among others, the strict categories $\strLog$, $\strReg$, and so on are generally thought of as auxiliary technical devices, with the $2$-categories $\LOG$, $\REG$, etc.~as the true objects of study. 
\end{caveat}

\begin{definition}\label{def:covers}
  Following \cite[A1.3.2]{johnstone:elephant}, we call a map a \defemph{cover} if it does not factor through any proper subobject of its target.
  We denote covers diagrammatically by $A \coverto B$.
  Say a cover is \emph{stable} if every pullback of it is a cover.

  A \emph{(stable) image factorisation} of a map is a factorisation as a (stable) cover followed by a monomorphism.
\end{definition}

\begin{proposition}[{\cite[A1.3.2–4]{johnstone:elephant}}]
  In a cartesian category, covers are precisely the extremal epimorphisms.
  In a regular category, covers are precisely the regular epimorphisms.
  \qed
\end{proposition}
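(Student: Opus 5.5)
We prove the two claims separately, using throughout the definition of a cover from \cref{def:covers}: a map that factors through no proper subobject of its target.

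\emph{Cartesian case.} Recall that an epimorphism $e : A \to B$ is \emph{extremal} if whenever $e = m \circ g$ with $m$ monic, $m$ is an isomorphism. The one non-trivial point is that, in a category with pullbacks, every cover is already an epimorphism. To see this, let $e : A \to B$ be a cover and let $f, g : B \to C$ satisfy $fe = ge$. Form the equaliser $m : E \to B$ of $f$ and $g$; it is monic, and $e$ factors through it. Since $e$ is a cover, $m$ is not a proper subobject, so it is an isomorphism, and hence $f = g$. Given this, the characterisations match directly. A map factors through no proper subobject exactly when every monomorphism it factors through is an isomorphism. So covers are precisely the epimorphisms with this property, i.e.\ the extremal epimorphisms. (Conversely, an extremal epimorphism factors through no proper mono by definition.)

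\emph{Regular case.} Every regular epimorphism is extremal, and this holds in any category. Suppose $e$ is the coequaliser of $u, v$ and $e = mg$ with $m$ monic. From $meu = mev$ we get $gu = gv$, so $g$ factors through $e$ as $g = he$. Then $mhe = e$, and since $e$ is epi, $mh = \id$. Thus $m$ is a monic split epimorphism, hence an isomorphism.

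For the converse, let $e : A \to B$ be a cover in a regular category. Take its kernel pair $p_1, p_2 : K \rightrightarrows A$ and its image factorisation $e = m q$, with $q$ a cover and $m$ monic. Since $e$ is a cover, $m$ is an isomorphism, so it remains to show that $e$ coequalises its kernel pair. This is the step that needs care. It is the standard fact that in a regular category the cover part of an image factorisation is the coequaliser of its kernel pair; this is \cite[A1.3.4]{johnstone:elephant}.

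To prove it directly, let $f : A \to C$ satisfy $fp_1 = fp_2$. Take the image of $\langle e, f\rangle : A \to B \times C$, written $A \coverto I \injto B\times C$. We show that the composite $I \to B$ is both monic and a cover, hence an isomorphism; its inverse then gives the required factorisation of $f$ through $e$.
\begin{enumerate}
\item \emph{It is a cover.} Its composite with $A \coverto I$ equals $e$, which is a cover.
\item \emph{It is monic.} Covers are stable under pullback, so $A\times A \coverto I\times_B I$ is a cover. Elements of $A \times_B A = K$ have equal $f$-values, so the two projections agree on $I\times_B I$, which gives monicity.
\end{enumerate}
Uniqueness of the factorisation follows because $e$ is epi.
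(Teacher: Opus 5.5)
Your proof is correct and is essentially the standard argument in the cited \cite[A1.3.2--4]{johnstone:elephant}; the paper itself gives no proof beyond that citation. The slips are minor: in the regular-epi step you mean $mgu = eu = ev = mgv$ (and covers-are-epi uses equalisers, not merely pullbacks), and in the monicity step the relevant cover is $A\times_B A \to I\times_B I$, not $A \times A$ — it is a cover as a composite of two pullbacks of $A \coverto I$, since composites of covers are covers — after which you use that covers are epi to conclude that the two maps $I\times_B I \to C$ agree.
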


\begin{definition}
  An object $P \in \cC$ is \emph{cover-projective} (resp.~\emph{regular-}) if maps from $P$ lift along covers (resp.~regular epimorphisms):
  \[ \begin{tikzcd}
    & Y \ar[d,cover,"e"] \\
    P \ar[ur,dashed,"\exists\, g"] \ar[r,"f"] & X
  \end{tikzcd} \]
\end{definition}

By \emph{projective} we always mean cover-projective, or equivalently regular\nobreakdash-, since we consider projectivity only in regular categories.
In a topos, all epis are regular, so ordinary projectivity (i.e.~with respect to epimorphisms) also coincides.
Lastly, we note the easy characterisation:
\begin{proposition} \label{prop:projective-equivalent-split}
  In a regular category, an object is projective just if every cover of it has a splitting. \qed
\end{proposition}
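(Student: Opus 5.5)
Both directions are short. For the forward direction, suppose $P$ is projective and let $e : Y \coverto P$ be a cover. Apply the lifting property to the identity $\id_P : P \to P$: this gives $g : P \to Y$ with $e g = \id_P$, which is exactly a splitting of $e$.

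For the converse, assume every cover of $P$ splits, and let $f : P \to X$ and a cover $e : Y \coverto X$ be given. Form the pullback
\[ \begin{tikzcd}
  P \times_X Y \ar[r,"f'"] \ar[d,cover,"e'"'] & Y \ar[d,cover,"e"] \\
  P \ar[r,"f"] & X
\end{tikzcd} \]
In a regular category covers are stable under pullback (\cref{def:basiccategories}), so $e'$ is a cover of $P$. By hypothesis it has a splitting $s : P \to P \times_X Y$ with $e' s = \id_P$. Set $g \defeq f' s$. Then
\[ e g = e f' s = f e' s = f, \]
so $g$ is the required lift.

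The only point needing care is that pullbacks of covers are covers. This is exactly where regularity is used: in a general cartesian category covers need not be stable, and the converse could fail there. Everything else is immediate, so no step is a real obstacle.
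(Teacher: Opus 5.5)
Your proof is correct, and it is the standard argument the paper has in mind; the paper omits the proof as routine. You lift $\mathrm{id}_P$ for one direction, and for the other you pull the cover back along $f$, using pullback-stability of covers in a regular category, split it, and compose with the pullback projection.
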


We will make frequent essential use of \emph{comma} and \emph{iso-comma} categories:

\begin{definition}
   Given categories and functors $\cC_0 \to[F_0] \cD \from[F_1] \cC_1$, the \defemph{comma category} $\comma{F_1}{F_0}$ has as objects triples $(c_1, c_0, \varphi)$, where $c_i \in \cC_i$ and $\varphi : F_1 c_1 \to F_0 c_0$, and maps $(c_1, c_0, \varphi) \to (d_1,d_0,\psi)$ are maps $f_i : c_i \to d_i$ such that $F_0 f_0 \varphi = \psi F_1 f_1$.
  The \defemph{iso-comma} $\isocomma{F_1}{F_0}$ is similar, but restricting to objects $(c_1, c_0, \varphi)$ in which $\varphi$ is an isomorphism.
  In both cases, here shown for the comma category, we have projections $P_i: \comma{F_1}{F_0} \to \cC_i $ and a natural transformation (isomorphism in the iso-comma case) $\alpha:F_1P_1\Rightarrow F_0P_0$ whose component at $(c_1, c_0, \varphi)$ is $\varphi$:
  \[ \begin{tikzcd}
     \comma{F_1}{F_0} \ar[d,"P_0"] \ar[r,"P_1"] & \cC_1  \ar[d,"F_1"] \ar[dl,"\alpha"',Rightarrow, shorten=6mm] \\
     \cC_0  \ar[r,"F_0"] & \cD.
  \end{tikzcd} \]

  When either functor is an identity, we will write e.g.~$\comma{\cD}{F}$ for $\comma{\id_\cD}{F}$.
\end{definition}

\begin{proposition} \label{prop:commas}
  Fix $\cC_0 \to[F_0] \cD \from[F_1] \cC_1$ as above.
  \begin{enumerate}[(1)]
  \item \label{item:comma-regular}
    If $\cC_0$, $\cC_1$, $\cD$, and $F_1$ are all regular, and $F_0$ cartesian, then $\comma{F_1}{F_0}$ is regular, and its projection functors $P_i : \comma{F_1}{F_0} \to \cC_i$ are strictly regular. 
    Moreover, a map $(f,f')$ in $\comma{F_1}{F_0}$ is a cover  if both of its components $f$, $f'$ are.
  \item \label{item:isocomma-topos}
    If $\cC_0$, $\cC_1$, and $\cD$ are regular (resp.~toposes), and both $F_i$ are regular (resp.~logical), then $\isocomma{F_1}{F_0}$ is regular (a topos), and its projection functors are strictly regular (strictly logical).
  \end{enumerate}
\end{proposition}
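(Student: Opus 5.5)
Both parts are proved by constructing the structure componentwise, with the projections preserving it on the nose by construction.

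For (1), finite limits in $\comma{F_1}{F_0}$ are computed componentwise. Given a diagram $(c_1^i, c_0^i, \varphi_i)$, take the chosen limits $\lim c_1^i$ and $\lim c_0^i$. The structure map is the composite
\[ F_1(\lim c_1^i) \to \lim F_1 c_1^i \to \lim F_0 c_0^i \iso F_0(\lim c_0^i), \]
where the middle map is induced by the $\varphi_i$, and the final isomorphism uses that $F_0$ is cartesian. The universal property is checked directly, and the projections preserve these chosen limits strictly.

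Image factorisations are more delicate, because they need $F_1$ to be regular but only $F_0$ cartesian. Given a map $(f_1, f_0) : (c_1,c_0,\varphi) \to (d_1,d_0,\psi)$, factor each component as $c_i \coverto m_i \injto d_i$. Since $F_1$ is regular, $F_1 c_1 \to F_1 m_1$ is a cover. The composite $F_1 c_1 \to F_0 c_0 \to F_0 m_0 \to F_0 d_0$ agrees with $F_1 c_1 \to F_1 m_1 \to F_1 d_1 \to F_0 d_0$. Since $F_0 m_0 \to F_0 d_0$ is monic (cartesian functors preserve monos), orthogonality of covers against monos in $\cD$ gives a unique diagonal $F_1 m_1 \to F_0 m_0$. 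This makes $(m_1, m_0)$ an object with the required maps.

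Next I will show that componentwise covers are covers in the comma category, which gives the ``moreover'' clause. A subobject of $(d_1,d_0,\psi)$ through which $(f_1,f_0)$ factors has monic components, since monos are computed componentwise via the kernel pair, which is a limit. Each component must then be an iso because $f_1, f_0$ are covers, and a map of the comma category with iso components is an iso. Pullback stability follows, since pullbacks are componentwise and covers in each $\cC_i$ are stable. Finally, the factorisation is unique up to iso, so it is an image factorisation, and the projections preserve it strictly if we choose it componentwise. The key point, and the one I expect to be the main obstacle, is that this argument only uses that $F_1$ preserves covers and $F_0$ preserves monos.

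For (2), the iso-comma is the full subcategory of $\comma{F_1}{F_0}$ on objects with $\varphi$ invertible. Componentwise limits stay in it since both functors preserve limits. With both $F_i$ regular, both $F_i c_i \to F_i m_i$ are covers and both $F_i m_i \to F_i d_i$ are monos, so the induced diagonal is an iso and images stay in the iso-comma too. So the iso-comma is regular, with strictly regular projections.

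For the topos case, I will construct the power object of $(c_1,c_0,\varphi)$ componentwise as $(Pc_1, Pc_0, \psi)$. Here $\psi$ is the composite of the logical comparison isomorphisms $F_1 Pc_1 \iso P F_1 c_1$ and $PF_0c_0 \iso F_0 Pc_0$ with $P(\varphi^{-1})$; note $P$ is contravariant on maps, so $P(\varphi^{-1}) : PF_1 c_1 \to PF_0 c_0$. The membership relation is $(\ni_{c_1}, \ni_{c_0})$, which is a subobject in the iso-comma because logical functors send $\ni$ to $\ni$ compatibly with the comparison maps.

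The universal property then follows componentwise. Relations $R \injto X \times (c_1,c_0,\varphi)$ in the iso-comma are pairs of relations compatible under the isos $F_i$. Their classifying maps are compatible because the $F_i$ preserve classifying maps up to the comparison isos. The natural numbers object is handled the same way, as $(N_1, N_0, \text{canonical iso})$, since logical functors preserve NNOs. The projections are strictly logical because we chose the structure componentwise.
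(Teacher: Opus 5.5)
Your proposal is correct and follows the paper's proof. As there, the structure is chosen componentwise so that the projections preserve it strictly, and the map components come from functoriality (for the iso-comma topos, functoriality of $\pow$ on isomorphisms); when $F_0$ is only cartesian, the structure map of the image comes, exactly as you do it, from orthogonality of the cover $F_1 c_1 \coverto F_1 m_1$ against the mono $F_0 m_0 \injto F_0 d_0$.

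One step should be made explicit: \emph{every} cover in $\comma{F_1}{F_0}$ is componentwise a cover, because the mono part of its componentwise image factorisation must be invertible. You need this for pullback-stability of all covers, not just componentwise ones, and for the $P_i$ to preserve covers.
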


\comment{Note: We really do need $F_1$ regular not just lex for $\comma{F_1}{F_0}$ to be regular: take $\cC_0$ free reg cat on an object $O$, $\cC_1$ free on a \emph{supported/inhabited} object $U$, and both their global sections functors to $\Set$. Then the map $(\Gamma U \to \Gamma O) \to (\Gamma 1 \to \Gamma 1)$ is a cover, but not reg epi.}

\begin{proof}
  All essentially straightforward.
  On the object components, logical structure is given componentwise (as it must be, for the projections to preserve it strictly).
  On the map component, it amounts mostly to the fact that all the regular structure is automatically (covariantly) functorial, and power-objects are functorial in isomorphisms;
  in case $F_0$ is just cartesian, the map component of the image factorisation does not quite fit this pattern, but is determined uniquely by orthogonality between covers and monos.
\end{proof}

\begin{proposition} \label{prop:comma-is-comma}
  For $\cC_0 \to[F_0] \cD \from[F_1] \cC_1$ in $\REG$,
  \begin{enumerate}[(1),beginpenalty=10000] 
  \item \label{item:functo-to-comma-regular} a functor $G : \cX \to \comma{F_1}{F_0}$ is regular, or strictly so, just if its object components $P_i G : \cX \to \cC_i$ both are;
  \item for regular $G_i : \cX \to \cC_i$, natural transformations $\gamma : F_1 G_1 \to F_0 G_0$ correspond bijectively to regular functors $G : \cX \to \comma{F_1}{F_0}$ lifting $(G_1,G_0) : \cX \to \cC_1 \times \cC_0$ (related via $\alpha G = \gamma$).
  \end{enumerate}
  \[ \begin{tikzcd}[sep=small]
    & \cC_1 \ar[dr,"F_1"] \ar[dd,"\gamma",Rightarrow,dashed,shorten=1.5ex] &
    &[-1em] &[1.5em] &[-1em] 
    &[1.4em] { } \ar[dr,phantom,"="{sloped,near end}] &[-2.3em] &[-0.5em] \cC_1 \ar[dr,"F_1"] \ar[dd,"\alpha",Rightarrow,shorten=1.5ex] &
    \\
    \cX \ar[dr,"G_0"'] \ar[ur,"G_1"] & & \cD
    & { } \ar[r,squiggly,leftrightarrow] & { }
    & \cX \ar[drrr,"G_0"',bend right=10] \ar[urrr,"G_1",bend left=10] \ar[rr,"G",dashed] & & \comma{F_1}{F_0}\mkern-15mu \ar[dr,"P_0"] \ar[ur,"P_1"'] & & \cD
    \\
    & \cC_0 \ar[ur,"F_0"'] & 
    & & &
    & { } \ar[ur,phantom,"="{sloped,near end}] & & \cC_0 \ar[ur,"F_0"']
  \end{tikzcd} \]

  Similarly, $\isocomma{F_1}{F_0}$ enjoys analogous properties with respect to natural \emph{isomorphisms}, in $\REG$ and also $\LOG$.
\end{proposition}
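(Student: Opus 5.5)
The plan is to separate the purely $1$-/$2$-categorical universal property of the comma category from the logical-structure claims. The latter reduce to the componentwise description in \cref{prop:commas}.

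First, the universal property at the level of bare categories, which is standard. A functor $G : \cX \to \comma{F_1}{F_0}$ is exactly the data of $G_1 = P_1 G$, $G_0 = P_0 G$, and, for each $x$, a map $\gamma_x : F_1 G_1 x \to F_0 G_0 x$. The condition that $G$ sends a map $f$ to a morphism of the comma category is precisely the naturality square of $\gamma$ at $f$. So functors $G$ lifting $(G_1,G_0)$ correspond bijectively to natural transformations $\gamma : F_1 G_1 \to F_0 G_0$, via $\gamma = \alpha G$. For the iso-comma, the same argument goes through with $\gamma$ required to be invertible.

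Next, the strict half of (1). By \cref{prop:commas}, the chosen finite limits, images, and (for iso-commas) power-objects in $\comma{F_1}{F_0}$ have object components given componentwise by the chosen structure in $\cC_1$ and $\cC_0$. The map component is then forced, either by universal properties or by orthogonality of covers against monos. Thus $G$ preserves the chosen structure on the nose exactly when $P_1 G$ and $P_0 G$ do. The object components agree by construction. The map components then agree automatically, since they are uniquely determined by the object data together with the relevant universal property.

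For the weak half of (1), one direction is immediate: the $P_i$ are strictly regular, so if $G$ is regular then so are both $P_i G$. For the converse, the step needing care is that a cone, or a cover-mono factorisation, in $\comma{F_1}{F_0}$ is limiting, or a cover, as soon as both of its components are. For limits, I would argue that $(P_1,P_0)$ jointly creates finite limits: the limit in the comma category has components the componentwise limits, and the canonical comparison map is an isomorphism iff both of its components are. The hypothesis that $F_0$ is cartesian is what makes the map component of a cone compatible. For covers, I would use the final clause of \cref{prop:commas}\ref{item:comma-regular}: a map whose components are both covers is a cover. Monos are detected componentwise because limits are. Hence image factorisations are preserved. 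With these in hand, the comparison maps for $G$ are isomorphisms iff their $P_i$-images are, which gives (1).

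For (2), combine the bijection from the first step with (1). Regularity of the lift $G$ depends only on $G_1$ and $G_0$, which are assumed regular, so every such $\gamma$ yields a regular $G$. In the iso-comma case in $\LOG$, the same reasoning goes through, using that power-objects in $\isocomma{F_1}{F_0}$ are componentwise and that an isomorphism in the comma category is detected componentwise.

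I expect the main obstacle to be the cover-detection step in the non-iso comma with $F_0$ merely cartesian. There, covers in $\comma{F_1}{F_0}$ are subtler, as the comment after \cref{prop:commas} indicates, so I would lean precisely on the stated sufficiency of componentwise covers rather than attempt a characterisation of all covers.
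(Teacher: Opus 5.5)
Your proposal is correct and takes essentially the paper's route: the bijection in (2) comes from the bare comma universal property, and (1) reduces to the componentwise description of the structure in \cref{prop:commas}, with the map components forced by universal properties and by cover--mono orthogonality. The one addition is that you spell out the weak-preservation half of (1), which the paper leaves implicit: limit cones, monos and isomorphisms are detected componentwise, and componentwise covers are covers by the last clause of \cref{prop:commas}(1).
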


In $2$-categorical terms, these form \emph{(iso-)comma objects} in $\REG$, $\LOG$.

\begin{proof}
  Part \zcref[noname]{item:functo-to-comma-regular} amounts to checking that for each object piece of logical structure on $\comma{F_1}{F_0}$ or $\isocomma{F_1}{F_0}$, the map component is uniquely determined given the object parts and commutativity with the corresponding structure maps.
  The latter two parts follow directly.
\end{proof}

Our final comma construction has something of a different character, and will turn out to have very important consequences:

\begin{proposition}[Artin gluing, {\cite[A2.1.12]{johnstone:elephant}}] \label{prop:artin-gluing-topos}
  Let $F : \cC \to \cD$ be a \emph{cartesian} functor between toposes.
  Then $\comma{\cD}{F}$ is a topos, $P_0 : \comma{\cD}{F} \to \cC$ is strictly logical, and $P_1 : \comma{\cD}{F} \to \cD$ is strictly regular.

  Moreover, this is appropriately functorial: pseudo-commuting squares $\varphi : JF' \iso FI$ with $I$, $J$ logical induce logical functors $\comma{J}{\varphi} : \comma{\cD'}{F'} \to \comma{\cD}{F}$, and this respects identities and composition:
  \[
  \begin{tikzcd}
\cC' \ar[r,"F'"] \ar[d,"I"] & \cD' \ar[d,"J"] \ar[dl,phantom,"\iso"{sloped}]
    \\
    \cC \ar[r,"F"] & \cD
  \end{tikzcd}
  \qquad \zigzagto[\qquad] \quad
  \begin{tikzcd}[row sep=small]
    \comma{\cD'}{F'} \ar[dd,"\comma{J}{\varphi}"] \ar[dr] \ar[drr,bend left=15] \\[-2ex]
    & \cC' \ar[r] \ar[dd] & \cD' \ar[dd]
    \\
    \comma{\cD}{F} \ar[dr] \ar[drr,bend left=15] \\[-2ex]
    & \cC \ar[r] & \cD.
  \end{tikzcd}
  \]
\end{proposition}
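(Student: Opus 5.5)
We construct the topos structure on $\comma{\cD}{F}$ directly, componentwise where possible. Objects are triples $(D, C, \varphi : D \to FC)$.

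Since $F$ is cartesian, \cref{prop:commas}\zcref[noname]{item:comma-regular}, applied with $F_1 = \id_\cD$ (regular) and $F_0 = F$ (cartesian), already shows that $\comma{\cD}{F}$ is regular. It also shows that both projections are strictly regular, with finite limits and images computed componentwise. What remains is:
\begin{itemize}
\item[(a)] an NNO;
\item[(b)] power-objects;
\item[(c)] that $P_0$ strictly preserves them;
\item[(d)] functoriality.
\end{itemize}

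For (a), take $(N_\cD, N_\cC, \nu)$, where $\nu : N_\cD \to FN_\cC$ is the unique map of iteration data induced by $F0$ and $Fs$. The universal property in the comma follows from recursion in each component: the $\cC$-component is forced by the NNO of $\cC$, the $\cD$-component by that of $\cD$, and compatibility with $\nu$ follows from uniqueness of recursion in $\cD$.

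For (b), which is the main step, I would follow the standard Artin-gluing recipe. Given $X = (D, C, \varphi)$, first compute $P_\cC C$. Then form in $\cD$ the pullback $E$ of $\pow_\cD(D) \to \pow_\cD(FC)$ (direct image, or better ``pullback of relations'' along $\varphi$) against
\[ F(\pow_\cC C) \to \pow_\cD(FC), \]
the map classifying $F(\ni_C) \subseteq F(\pow C) \times FC$.

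The cleanest way is to define the subobject lattice explicitly. A subobject of $X$ is a pair $(D' \le D,\ C' \le C)$ with $D' \le \varphi^*FC'$. So the classifying object should consist of pairs
\[ (S \in \pow_\cD D,\ U \in F\pow_\cC C) \quad\text{with}\quad S \subseteq \varphi^{-1}(\text{the subobject named by } U). \]
This is a subobject $E \rightarrowtail \pow_\cD D \times F\pow_\cC C$ carved out in the internal logic of $\cD$, and it comes with its evident map $E \to F\pow_\cC C$. Hence $\pow X := (E, \pow_\cC C, E \to F\pow_\cC C)$.

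The membership relation is the subobject of $\pow X \times X$ given by $\ni$ in each component. Checking the universal property reduces to checking, for relations $R \rightarrowtail Y \times X$ in the comma, that their componentwise characteristic maps assemble to a map in the comma. This uses only that $F$ preserves pullbacks (to transport $\ni_C$ and the classifying maps), and uniqueness follows componentwise.

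By choosing $\pow_\cC$ and the NNO of $\cC$ as the $\cC$-components, $P_0$ strictly preserves power-objects, the NNO, and limits, which gives (c). $P_1$ strictly preserves the regular structure, by \cref{prop:commas}.

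For (d), given a pseudo-commuting square with $\varphi : JF' \iso FI$, define
\[ \comma{J}{\varphi}(D, C, \psi) = (JD,\ IC,\ \varphi_C \circ J\psi). \]
This is cartesian and regular componentwise. It preserves the NNO and power-objects because $I$ and $J$ are logical and $\varphi$ is an isomorphism: the defining pullback for $E$ is expressed in terms of $\pow$, $F$, and finite limits, and all of these are preserved up to the canonical isomorphisms. Preservation of identities and composition is immediate from the formula, strictly on objects, given the pasting of the isos $\varphi$.

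The main obstacle is verifying the power-object universal property in (b) cleanly, specifically making sure the $\cD$-component correctly accounts for the constraint $D' \le \varphi^* F C'$. I would handle this in the internal language of $\cD$ rather than by diagram chasing.
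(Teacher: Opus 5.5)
Your proposal is correct and follows essentially the same approach as the paper's proof. Like the paper, you take the regular structure and the NNO componentwise via \cref{prop:commas}, give the power-object of $(D,C,\varphi)$ the $\cC$-component $\pow C$ and the $\cD$-component $\{(b,a) \in \pow D \times F\pow C \mid b \subseteq \varphi^*\xi_C(a)\}$, and treat functoriality as routine. The one blemish is your first phrasing of this as a pullback over $\pow_\cD(FC)$, which would impose an equality rather than the inclusion; the paper instead pulls back ${\subseteq}_D \rightarrowtail \pow D \times \pow D$, projected to its second factor, along $\varphi^*\xi_C : F\pow C \to \pow D$, and this is exactly your corrected internal-language description.
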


\begin{proof}
  The regular structure is component-wise as in \cref{prop:commas}\zcref[noname]{item:comma-regular};
  the NNO is also componentwise, with $\NNO_\cD \to F\NNO_\cC$ induced by $F0_\cC$, $FS_\cC$;
  and power-objects are given by the pullback
  \[ 
  \pow \left( \begin{tikzcd}[
        baseline={([yshift=-axis_height]\tikzcdmatrixname)}]
    D \ar[d,"p"] \\ FC 
  \end{tikzcd} \right)
  = \left( \begin{tikzcd}[sep=small,
        baseline={([yshift=-axis_height]\tikzcdmatrixname)}]
    \bullet \ar[dd] \ar[rr] \ar[ddr,drpb] &[0.6em] &[-0.4em] {\subseteq}_{D} \ar[d,inj]
    \\ & & \pow D \times \pow D \ar[d,"\pi_1"]
    \\ F \pow C \ar[r,"\xi_C"] & \pow F C \ar[r,"p^*"] & \pow D
  \end{tikzcd} \hspace{-10em} \right) \hspace{10em}
  \]
  where $\xi_C$ is the canonical comparison map $FPC \to PFC$, corresponding to $F({\in_C}) \injto FC \times F \pow C$.
  In the internal language, the pullback can be written as $\HOLinterp[ a \of[2.5] FPC,\, b \of[2.5] PD \suchthat \allowbreak b \subseteq \{ y \of D \suchthat p(b) \functorin{\textit{F}} a \} ]$.

  Verification of the functoriality is gruesome but routine.
  %
\end{proof}

\subsection{Essentially algebraic theories} \label{sec:eats-prelims}
By \emph{essentially algebraic theories} we mean, initially and broadly,  that class of theories which correspond to cartesian categories, and whose categories of models in $\Set$ are, accordingly, locally finitely presentable. 
There are several different syntactical characterisations of these, including the \emph{essentially algebraic theories} of \cite{freyd:aspects-of-topoi},  the \emph{cartesian theories} of \cite{johnstone:elephant}, and the \emph{partial Horn theories} of  \cite{palmgren-vickers}. 
Of these, we use  cartesian theories, as defined in \cite[D1.3.4]{johnstone:elephant}, when we need a syntactic perspective. 


%
%

\begin{theorem}[{\cite[\textsection 1.C]{adamek-rosicky}, \cite[D]{johnstone:elephant}}]
  \label{thm:eats-characterisations}
  The following are equivalent, for a category $\cM$:
  \begin{enumerate}[(a)]
    \item \label{item:eat-equivs:lfp}
      $\cM$ is locally finitely presentable, i.e.\ it is cocomplete and has a small, full subcategory  $\cM_{\mathrm{fp}}$ of finitely presented objects such that every object is a directed colimit of objects from  $\cM_{\mathrm{fp}}$ \cite[\textsection 1.A]{adamek-rosicky}.
    \item \label{item:eat-equivs:lex}
      $\cM \equiv \LEX(\smC,\Set)$ for some small cartesian category $\smC$.
    \item \label{item:eat-equivs:cart}
      $\cM \equiv \Mod[\Set]{\thT}$ for some cartesian theory $\thT$ of predicate logic.
    \item \label{item:eat-equivs:sketch}
      $\cM \equiv \Mod[\Set]{\skS}$ for some finite limit sketch $\skS$, in the sense of \cite[D2.1.2(b)]{johnstone:elephant}.
  \end{enumerate}

  Furthermore:
  \begin{enumerate}[(1)]
    \item \label{item:eat-facts-duality}
      In condition \zcref[noname]{item:eat-equivs:lex}, the cartesian category $\smC$ is equivalent to $\cM_{\mathrm{fp}}^\op$; indeed, this underlies a biequivalence $\LFP \equiv_2 \Lex^\op$ (\enquote{Gabriel–Ulmer duality}, \cite[Thm.~6]{adamek-porst:alg-theories-of-quasivarieties}).
      \comment{Is the biequivalence already somewhere in \cite{adamek-rosicky}? Couldn’t find it on a skim; but if so, no need to cite \cite{adamek-porst:alg-theories-of-quasivarieties}.}
    
    \item \label{item:eat-facts-finiteness}
      If $\cM \equiv \Mod[\Set]{\thT}$ for some \emph{finitely axiomatised} cartesian theory as in \zcref[noname]{item:eat-equivs:cart}, then it is equivalent to $\Mod[\Set]{\skS}$ for some \emph{finite} finite limit sketch $\skS$ as in \zcref[noname]{item:eat-equivs:sketch}, and vice versa.
    
    \item \label{item:eat-facts-syncat}
      Given a cartesian theory $\thT$ as in \zcref[noname]{item:eat-equivs:cart}, a cartesian category as in \zcref[noname]{item:eat-equivs:lex} is given by the \emph{syntactic category} $\syncat{\thT}$ of $\thT$, with objects cartesian formulas-in-context $[ \vec x \suchthat \varphi(\vec x) ]$ of $\thT$. \qed
  \end{enumerate}
\end{theorem}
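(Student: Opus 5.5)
This is a standard collection of results from the literature, so my plan is to assemble the equivalences from known constructions, citing \cite{adamek-rosicky} and \cite[D1–D2]{johnstone:elephant} for each step. I will prove the cycle \zcref[noname]{item:eat-equivs:cart} $\Rightarrow$ \zcref[noname]{item:eat-equivs:lex} $\Rightarrow$ \zcref[noname]{item:eat-equivs:sketch} $\Rightarrow$ \zcref[noname]{item:eat-equivs:lfp} $\Rightarrow$ \zcref[noname]{item:eat-equivs:lex} $\Rightarrow$ \zcref[noname]{item:eat-equivs:cart}, collecting facts \zcref[noname]{item:eat-facts-duality}–\zcref[noname]{item:eat-facts-syncat} along the way.

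\emph{From a theory to a category, and back.} Given a cartesian theory $\thT$, I build the syntactic category $\syncat{\thT}$ whose objects are cartesian formulas-in-context, modulo $\thT$-provable equivalence. Its morphisms are $\thT$-provably functional cartesian formulas. This category is cartesian, and the universal property of syntactic categories gives $\Mod[\Set]{\thT} \equiv \LEX(\syncat{\thT},\Set)$. This proves \zcref[noname]{item:eat-facts-syncat} and \zcref[noname]{item:eat-equivs:cart} $\Rightarrow$ \zcref[noname]{item:eat-equivs:lex}. For the converse, a small cartesian $\smC$ has an internal cartesian theory: one sort per object, one function symbol per arrow, and axioms saying that the composites and limit cones are as given. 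Its models are exactly the cartesian functors $\smC \to \Set$.

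\emph{Sketches and local presentability.} Any small cartesian $\smC$ is itself a finite limit sketch, by taking all of its finite limit cones, and $\Set$-models of that sketch are exactly the lex functors. This gives \zcref[noname]{item:eat-equivs:lex} $\Rightarrow$ \zcref[noname]{item:eat-equivs:sketch}. Next, the category of models of a finite limit sketch is a full reflective subcategory of a presheaf category $[\cG,\Set]$ that is closed under filtered colimits, because finite limits commute with filtered colimits in $\Set$. Such a subcategory is locally finitely presentable by \cite[1.C]{adamek-rosicky}, giving \zcref[noname]{item:eat-equivs:sketch} $\Rightarrow$ \zcref[noname]{item:eat-equivs:lfp}. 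Finally, if $\cM$ is locally finitely presentable, then $\cM_{\mathrm{fp}}$ is closed under finite colimits, so $\cM_{\mathrm{fp}}^\op$ is cartesian. The restricted Yoneda functor $\cM \to \LEX(\cM_{\mathrm{fp}}^\op,\Set)$ is an equivalence, since every object is a canonical directed colimit of finitely presentables. This gives \zcref[noname]{item:eat-equivs:lfp} $\Rightarrow$ \zcref[noname]{item:eat-equivs:lex}, together with the identification $\smC \equiv \cM_{\mathrm{fp}}^\op$ in \zcref[noname]{item:eat-facts-duality}.

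\emph{Main obstacle.} The hardest parts are the remaining duality and finiteness statements. For \zcref[noname]{item:eat-facts-duality}, I need the recovery $\smC \equiv \LEX(\smC,\Set)_{\mathrm{fp}}^\op$: the finitely presentable objects of $\LEX(\smC,\Set)$ are exactly the retracts of representables, and representables are closed under retracts since $\smC$ is cartesian. I also need the $2$-functoriality and biequivalence itself, which I would simply cite from \cite[Thm.~6]{adamek-porst:alg-theories-of-quasivarieties} rather than reprove. For \zcref[noname]{item:eat-facts-finiteness}, I translate syntactically in both directions. A finite axiomatisation has finitely many symbols, and each axiom contributes finitely many sketch cones: one cone for each partial operation's domain and one for each equation, obtained by unwinding terms into finitely many limit diagrams. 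Conversely, a finite sketch yields finitely many sorts, symbols and axioms by the construction above.
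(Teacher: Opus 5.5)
Your proposal is correct and takes essentially the paper's approach: the paper proves nothing here itself, citing \cite[\textsection 1.C]{adamek-rosicky}, \cite[D]{johnstone:elephant} and \cite[Thm.~6]{adamek-porst:alg-theories-of-quasivarieties}, and your cycle (c)$\Rightarrow$(b)$\Rightarrow$(d)$\Rightarrow$(a)$\Rightarrow$(b)$\Rightarrow$(c) unpacks the standard arguments in those sources (syntactic categories, sketch models closed under filtered colimits in presheaves, restricted Yoneda on finitely presentables). Two points you leave implicit are also covered by those citations: reflectivity of the sketch models, and reading $\cM_{\mathrm{fp}}$ as the essentially small category of all finitely presentable objects so that it is closed under finite colimits.
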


\begin{definition}
  By an \defemph{essentially algebraic theory} $\thT$, we mean agnostically a locally finitely presentable category $\cM_\thT$, or a small cartesian category $\syncat{\thT}$, justified by \zcref[noname]{item:eat-facts-duality} above.
  We say $\thT$ is \defemph{finitely presented} if it admits a presentation by a finite cartesian theory, or equivalently a finite finite limit sketch (sic), in light of \zcref[noname]{item:eat-facts-finiteness} above.
  
  A \defemph{model} of $\thT$ in a cartesian category $\cE$ is just a cartesian functor $M : \syncat{\thT} \to \cE$ (equivalently, a model of in $\cE$ the theory or sketch presenting $\thT$); we denote the category of these by $\Mod[\cE]{\thT}$.

  We call objects of $\syncat{\thT}$ \defemph{types} of $\thT$,
  and their interpretations $\EATinterp[X][M] \in \cE$ under a model $M \in \Mod[\cE]{\thT}$ the types of $M$.
  \comment{“Types” not quite ideal, but no obvious better alternatives: context, formula, sort, …?  Consider this; if changing, make sure to change everywhere.}
\end{definition}

\comment{Convention to keep consistent with: We generally say \emph{initial} object/model/etc, not \emph{free}; but just for ($r$-)toposes, we call them \emph{the free topos}, etc.}

We will typically present essentially algebraic theories by describing their category of models, in such a way that they are evidently models of a suitable cartesian theory.
\comment{Expository issue: We never precisely define Cartesian theories, but we assume readers are happy to recognise them when we see them. I think this probably can work, but wants to be said more clearly somewhere above?}
In particular, the $1$-categories of strict logically-structured categories defined above are all essentially-algebraic:

\begin{proposition} \label{prop:logical-cats-as-eats}
  $\strCat$, $\strReg$, $\strLog$ are each the category of models of some essentially algebraic theory.
\end{proposition}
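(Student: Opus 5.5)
We exhibit, for each of $\strCat$, $\strReg$, $\strLog$, an explicit cartesian theory (in the sense of \cite[D1.3.4]{johnstone:elephant}) whose $\Set$-models are exactly the strict structured categories and whose model homomorphisms are exactly the strict functors. By \cref{thm:eats-characterisations}, this suffices. The guiding principle is \cref{convention:chosen-structure}: every existence condition comes with chosen witnesses, so each piece of structure becomes a partial operation. Each universal property then becomes a Horn axiom, with the uniqueness parts stated as equations. Homomorphisms of such theories preserve all operations on the nose, which is exactly strictness.

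For $\strCat$ we use the usual two-sorted theory. It has sorts $O$ and $M$, total operations $\mathrm{dom},\mathrm{cod} : M \to O$ and $\mathrm{id} : O \to M$, and composition defined on the pullback $\{(g,f) \suchthat \mathrm{dom}\, g = \mathrm{cod}\, f\}$. The axioms are the unit and associativity equations.

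For $\strReg$ we add the following to this theory.
\begin{itemize}
\item Chosen terminal object and chosen pullbacks: an operation on cospans returning an apex and two projections, together with a partial pairing operation on commuting cones. Its axioms are the projection equations and the uniqueness equation $\langle \pi_1 h, \pi_2 h\rangle = h$.
\item Chosen image factorisation $f = m_f \circ e_f$, with two conditions. Monicity of $m_f$ is the Horn axiom $m_f u = m_f v \Imp u = v$. Orthogonality of $e_f$ against monos is a partial diagonal-filler operation, defined on commuting squares whose right leg is monic. Its defining condition is again a Horn condition, namely the monicity hypothesis, and its axioms are the commutation equations.
\item Pullback-stability of covers. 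Since covers are exactly the maps orthogonal to all monos, it suffices to say that the pullback of each chosen $e_f$ along any map again admits the diagonal-filler operation against monos. This is one more partial operation with Horn defining conditions.
\end{itemize}
One checks that these conditions force the image factorisation to be a stable cover-mono factorisation. Conversely, every regular category with chosen structure yields a model, because fillers are unique.

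For $\strLog$ we add the following to the theory for $\strReg$.
\begin{itemize}
\item Power objects: an operation $X \mapsto (\pow X, \in_X \injto X \times \pow X)$ with $\in_X$ monic. There is also a partial operation taking a mono $R \injto Y \times X$ to its classifying map $\chi_R : Y \to \pow X$. The axioms state that the chosen pullback of $\in_X$ along $\id \times \chi_R$ is isomorphic to $R$ over $Y \times X$, and that $\chi$ is unique, via the equation $\chi_{(\id\times g)^*\in_X} = g$. Here $\chi_R$ is defined only when $R$ is monic, and monicity is Horn.
\item An NNO: chosen $\NNO$, $0$, $S$, and a recursion operation $\mathrm{rec}(x,h)$, with the recursion equations and the uniqueness equation.
\end{itemize}

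The main obstacle is making sure every defining condition really is cartesian, i.e.\ a conjunction of equations, with no existential quantifiers, disjunctions or negations. The delicate points are \enquote{$m$ is mono} used as a domain condition, and the isomorphism $R \iso (\id\times\chi_R)^*\in_X$. We handle the first by stating monicity as the universally quantified Horn implication above. Since the domain conditions of partial operations must themselves be cartesian formulas, we instead record monicity as a property of chosen data, namely that the kernel-pair projections of the chosen pullback coincide, which is an equation. We handle the second by adding explicit comparison morphisms as operations, together with inverse equations. Throughout, this approach is to be preferred to a proof via lfp-ness: directly verifying cocompleteness and a generating set of finitely presentable objects would be far heavier.
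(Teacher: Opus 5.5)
Your strategy is the paper's: present each kind of structure by partial operations with Horn axioms, and encode monicity by the equation saying the kernel-pair projections of the chosen pullback coincide, since monicity cannot serve as the domain condition of a partial operation. Your body text first calls the monicity hypothesis on the filler's domain (and on $\chi$'s domain) a Horn condition. As you say at the end, in both places this has to be the kernel-pair equation.

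One axiom does not do its job. The equation $\chi_{(\id\times g)^*\in_X} = g$ does not make classifying maps unique. Uniqueness requires that whenever $(\id\times g)^*\in_X$ and $(\id\times g')^*\in_X$ are the same subobject of $Y\times X$, then $g = g'$. Your equation gives only $g = \chi_{(\id\times g)^*\in_X}$ and $g' = \chi_{(\id\times g')^*\in_X}$. These are $\chi$ applied to two isomorphic but generally distinct monos, and nothing forces $\chi$ to be invariant under isomorphism over $Y\times X$. Pairing for pullbacks avoids this problem because its arguments are the components themselves, which are literally equal.

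Here is a countermodel. Take a small topos equivalent to a category of sets but containing many isomorphic copies of each object. Choose the pullbacks so that the chosen monos $(\id\times g)^*\in_X$ are pairwise distinct for distinct $g$, for instance by tagging each pullback object with its cospan. Set $\pow X \defeq \Omega^X \times 2$, with $\in_X$ pulled back from the true membership relation. Define $\chi_R \defeq g$ if $R$ is literally the chosen pullback along $\id\times g$, and $\chi_R \defeq (\text{true classifier of } R, 0)$ otherwise. All of your axioms then hold, including the comparison isomorphisms. But $\pow X$ is not a power object, since $(S,0)$ and $(S,1)$ classify the same subobject. So the category of models of your theory is strictly larger than $\strLog$.

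The fix is to replace your equation by a Horn sequent quantifying over witnesses. Suppose $R \injto Y\times X$ is mono, $g : Y \to \pow X$, and $u : R \to (\id\times g)^*\in_X$ and $v$ in the other direction are maps over $Y\times X$. Then conclude $\chi_R = g$. Genuine uniqueness of this kind is also what your claim that strict logical functors $F$ satisfy $F(\chi_R) = \chi_{FR}$ actually rests on.
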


\begin{proof}
  The theory of categories is standard, and easy to write down.
  It can then be extended by operations for the further assumed structure, and operations/axioms enforcing their defining properties.

  The one subtlety is that the definitions of covers and power-objects quantify over monomorphisms, which are not in general an essentially-algebraic notion.
  Given finite limits, though, they can be defined essentially-algebraically as maps whose kernel pair projections are isomorphisms.

  It is then clear that the models of these theories are categories, regular categories, and toposes respectively, and their homomorphisms precisely the strictly structure-preserving functors.
\end{proof}

We exploit several consequences of essential-algebraicity.
Most ubiquitously, it allows us to uniformly internalise all these notions.

\begin{definition} \label{def:internal-cats}
  Let $\cE$ be a cartesian category.
  An \emph{internal} category (regular category, topos, etc.)\ in $\cE$ means a model of the cartesian theory of categories (regular categories, toposes, etc.)\ in $\cE$.
  Maps of these can be viewed as functors (strictly regular, logical, etc.); we denote the resulting $1$-categories by $\strCat[\cE]$, $\strReg[\cE]$, $\strLog[][\cE]$, etc.
  
  With the evident notion of natural transformation, we denote the $2$-category of internal categories by $\intCat[\cE]$.
  (One may also define $\intReg[\cE]$, $\intLog[][\cE]$, with suitable notions of (weakly) regular/logical internal functors; but the definitions are a little involved and we never need them.)
\end{definition}

Our other main uses of essential-algebraicity are to obtain initial logically-structured categories (and later internalise them, in \cref{sec:internal-free-models}), and to justify appeals to \emph{compactness}.
Categorically, this latter amounts to understanding how initial models interact with filtered colimits of theories --- in our application, $\IHOL$ as the colimit of its finite-rank fragments $\IHOL[r]$ (all in their categorical forms, toposes and $r$-toposes).

In fact it turns out more convenient to give the general compactness result dually, in terms of categories of models:

\begin{lemma} \label{lem:initial-object-of-limit-lfpcat}
  Let $\cI$ be a filtered category, $(\cM_i)_{i \in \cI^\op}$ a diagram (strict or pseudo) of lfp categories and finitary functors, and $M_\infty$ some (bi-)limit for it in both $\CAT$ and $\LFP$.%
  \footnote{In fact the limits in $\CAT$ and $\LFP$ should always coincide; but we have not been able to find a reference for this, and checking that both conditions hold when we apply the lemma is easier than proving the general case.
  \comment{Search further for reference??}}
  Write the functors of the diagram and cone as $U_i : \cM_j \to \cM_i$, for each $i \to j$ in $\cI$, and also for $j = \infty$; and write $A_i$ for the initial object of $\cM_i$.
  
  Then for each $i$, $U_i A_\infty \iso \colim_{j \in i/\cI} U_i A_j$.
\end{lemma}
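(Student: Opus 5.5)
Fix $i \in \cI$. The plan is to build the candidate colimit inside $\cM_i$, lift it to an object of $\cM_\infty$ using the limit property in $\CAT$, and then show this lift is initial in $\cM_\infty$.

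For each $j \in i/\cI$ (with structure map $i \to j$), initiality of $A_j$ gives a unique map $A_j \to U_j A_\infty$. For each $j \to k$ it also gives a unique map $A_j \to U_{jk} A_k$, where $U_{jk} : \cM_k \to \cM_j$ is the transition functor. Applying $U_{ij}$ turns these into a diagram $j \mapsto U_{ij} A_j$ over $i/\cI$ in $\cM_i$, which is filtered since $\cI$ is, together with a cocone to $U_i A_\infty$ (using $U_{ij}U_j \iso U_i$). Uniqueness of maps out of initial objects makes all the required coherences automatic. Write $B_i \defeq \colim_{j \in i/\cI} U_{ij}A_j$ for the colimit in $\cM_i$, which exists since $\cM_i$ is cocomplete. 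The goal is to show that the comparison $B_i \to U_i A_\infty$ is an isomorphism.

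Step 1 is to assemble the $B_i$ into an object of the limit. For $i \to i'$, cofinality of $i'/\cI \to i/\cI$ (filteredness) gives $B_i \iso \colim_{j \in i'/\cI} U_{ij}A_j$. Since the transition functor $U_{ii'}$ is finitary, it preserves this filtered colimit, so $U_{ii'}B_{i'} \iso B_i$, coherently. By the universal property of $\cM_\infty$ as a (bi)limit in $\CAT$, the family $(B_i)$ with these isomorphisms determines an object $B \in \cM_\infty$ with $U_i B \iso B_i$. The cocone maps $B_i \to U_iA_\infty$ are compatible with these isomorphisms, so they likewise determine a map $B \to A_\infty$ in $\cM_\infty$.

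Step 2 is to show that $B$ is initial. Given $X \in \cM_\infty$, a map $B \to X$ in the limit category is a compatible family of maps $B_i \to U_iX$. Each such map is in turn a compatible family of maps $U_{ij}A_j \to U_iX \iso U_{ij}U_jX$, and the canonical choice is $U_{ij}$ applied to the unique map $A_j \to U_jX$. Existence follows from this choice. For uniqueness, note that maps out of a colimit are determined on the colimit injections. Compatibility across the family then forces the component at stage $j$, evaluated on $U_{ij}A_j$, to be the restriction of the $B_j$-component along the injection $A_j \to B_j$. That restriction is a map $A_j \to U_jX$, hence the unique one. So $B$ is initial, $B \to A_\infty$ is an isomorphism, and applying $U_i$ gives the claimed $U_iA_\infty \iso B_i$.

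The main obstacle is handling the bilimit when the diagram is only pseudo: the description of objects and morphisms of $\cM_\infty$ as compatible families, together with all coherence isomorphisms, must be justified carefully. Uniqueness in Step 2 also relies on $i/\cI$ being filtered, so that the components at the $j$-indexed stages jointly determine the map. The hypothesis that $\cM_\infty$ is also the limit in $\LFP$ appears not to be needed for this argument, beyond guaranteeing that the $U_i$ are finitary. I expect it instead serves to justify that the comparison morphism matches the canonical one, or to give a cleaner alternative route via left adjoints: each finitary right adjoint $U_i$ has a left adjoint, and initial objects are preserved by left adjoints.
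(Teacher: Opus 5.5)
Your proof is correct and follows the same route as the paper. In both, you form $B_i$ as the filtered colimit over $i/\cI$, use finality of $i'/\cI \to i/\cI$ together with finitariness of the transition functors to get a coherent family, lift it to $\cM_\infty$, and check initiality by assembling the unique maps $A_j \to U_j X$; your uniqueness argument via the injections $A_j \to B_j$ spells out what the paper dismisses as \enquote{similar}.

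One small correction to your closing remark: uniqueness uses only the fact that maps out of a colimit are determined on the colimit injections, not filteredness. Filteredness is needed in Step~1, so that the finitary transition functors preserve the colimits $B_{i'}$.
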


\begin{proof}
  For each $i$, set $A_{\infty,i} \coloneq \colim_{j \in i/\cI} U_i A_j$.
  Functors $j/\cI \to i/\cI$ between coslices of a filtered category are final; so we have isomorphisms $U_i A_{\infty,j} \iso A_{\infty,i}$ for each $i \to j$ making the objects $A_{\infty,i}$ into a coherent family in the diagram  $(\cM_i)_{i \in \cI^\op}$.
  They thus lift to some object $A_{\infty,\infty} \in \cM_\infty$, with coherent isomorphisms $U_i A_{\infty,\infty} \iso A_{\infty,i}$ for each $i$.
  
  Now it is direct to check that $A_{\infty,\infty}$ is initial in $\cM_{\infty}$. Given any $X \in \cM_\infty$, the maps $A_j \to U_j X$ assemble into maps $A_{\infty,i} \to U_i X$ naturally in $i$, and hence into a map $A_{\infty,\infty} \to X$ in $\cM_\infty$. Uniqueness is similar.
\end{proof}

Gabriel–Ulmer duality tells us we can equivalently view this situation as a colimit of theories, $\thT_\infty = \colim_{i \in \cI} \thT_i$.
In applications, however, the limit condition on categories of models is easier to verify.

\begin{proposition}{\cite[D2.4.9]{johnstone:elephant}}
  Let $\thT$ be an essentially algebraic theory, and $X \in \syncat{\thT}$ some type of $\thT$.
  Then for any filtered colimit of models $M = \colim_{i} M_i$, we have
  $\EATinterp[X][M] \iso \colim_i \EATinterp[X][M_i]$.
\end{proposition}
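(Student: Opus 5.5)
The plan is to reduce the claim to the fact that finite limits commute with filtered colimits in $\Set$. By \cref{thm:eats-characterisations}, we may present $\thT$ as a cartesian category $\syncat{\thT}$ and identify models with cartesian functors $\syncat{\thT} \to \Set$. Under this identification, $\EATinterp[X][M]$ is evaluation at $X$, that is, $M(X)$. So the claim amounts to saying that filtered colimits in $\Mod[\Set]{\thT} \equiv \LEX(\syncat{\thT},\Set)$ are computed pointwise.

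First I will form the pointwise colimit $L \defeq \colim_i M_i$ in the functor category $[\syncat{\thT},\Set]$, so that $L(X) = \colim_i M_i(X)$ for each $X$. The key step is to check that $L$ is again cartesian. Take a finite limit $Y = \lim_k Y_k$ in $\syncat{\thT}$. Since each $M_i$ is cartesian, we get
\[ L(Y) = \colim_i M_i(\lim_k Y_k) \iso \colim_i \lim_k M_i(Y_k) \iso \lim_k \colim_i M_i(Y_k) = \lim_k L(Y_k). \]
The middle isomorphism is exactly the standard commutation of filtered colimits with finite limits in $\Set$. I also need that this composite is the canonical comparison map, which follows by naturality. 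So $L \in \LEX(\syncat{\thT},\Set)$.

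Next I will show that $L$, with the colimit injections $M_i \to L$, is a colimit in the full subcategory $\LEX(\syncat{\thT},\Set)$. This holds because it is a colimit in the ambient functor category and already lies in the full subcategory. Hence $L \iso \colim_i M_i = M$ in $\Mod[\Set]{\thT}$. Evaluating at $X$ gives $\EATinterp[X][M] \iso L(X) = \colim_i \EATinterp[X][M_i]$. Any two choices of the colimit $M$ agree up to unique isomorphism, so the result does not depend on which colimit we pick.

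I expect the only real content to be the interchange of filtered colimits and finite limits in $\Set$. For this I will either cite the standard result or give the direct argument. The elements of $\colim_i$ are equivalence classes of pairs $(i,x)$. Filteredness then lets us find a common stage for finitely many elements, and it lets us equalise finitely many identifications at a single stage. That is precisely what is needed for the canonical map $\colim\lim \to \lim\colim$ to be both surjective and injective. Everything else is bookkeeping with the equivalence $\Mod[\Set]{\thT} \equiv \LEX(\syncat{\thT},\Set)$.
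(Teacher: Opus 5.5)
Your proposal is correct and follows essentially the same route as the paper. The paper's proof says exactly this: $\LEX(\syncat{\thT},\Set)$ is closed under filtered colimits in the functor category because filtered colimits commute with finite limits in $\Set$, so filtered colimits of models are computed pointwise; you spell out that verification in more detail.
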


\begin{proof}
  This is the standard fact that filtered colimits in $\LEX(\syncat{\thT},\Set)$ are computed pointwise: $\LEX(\syncat{\thT},\Set)$ is closed under filtered colimits in $\CAT(\syncat{\thT},\Set)$ since filtered colimits commute with finite limits.
\end{proof}

\begin{corollary} \label{cor:covers-in-filtered-colim} \pushQED{\qed}
  Suppose $\smC_\infty = \colim_{i \in \cI} \smC_i$ is a filtered colimit in $\strReg$, and $(X_i \in \smC_i)_{i \in I \cup \{\infty\}}$ a matching family of objects in the diagram and colimit.
  Then we have a filtered colimit of sets
  \[ \textstyle \{ Y \in \smC_\infty,\, e : Y \coverto X_\infty \} = \colim_{i \in \cI} \{ Y \in \smC_i,\, e : Y \coverto X_i \}. \qedhere \] \popQED
\end{corollary}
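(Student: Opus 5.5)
The plan is to present the set of covers of $X$ as the interpretation of a single type in the essentially algebraic theory of regular categories, and then apply the preceding proposition \cite[D2.4.9]{johnstone:elephant}: filtered colimits of models are computed pointwise on types.

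First I would replace the theory of regular categories by the theory $\thT_X$ of \emph{pointed} strict regular categories, i.e.\ a strict regular category together with a chosen object. This is evidently still essentially algebraic: add one constant of sort \enquote{objects} to the theory of \cref{prop:logical-cats-as-eats}. The family $(\smC_i, X_i)$ is then a filtered diagram in $\Mod[\Set]{\thT_X}$, with $(\smC_\infty, X_\infty)$ a cocone. Filtered colimits in $\Mod[\Set]{\thT_X}$ are created by the forgetful functor to $\strReg$, because the sorts are computed pointwise and the extra constant is a global element. So $(\smC_\infty, X_\infty)$ is the colimit in $\Mod[\Set]{\thT_X}$, given that $\smC_\infty$ is the colimit in $\strReg$.

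Next I would exhibit the set in question as a type of $\thT_X$. Take the formula-in-context in variables $Y$ (an object) and $e$ (a morphism) expressing
\[ \operatorname{dom}(e) = Y, \qquad \operatorname{cod}(e) = X, \qquad e \text{ is a cover.} \]
The one point needing care is that \enquote{$e$ is a cover} must be cartesian, i.e.\ expressible by equations between partial operations. Following the trick in the proof of \cref{prop:logical-cats-as-eats}, I would write it as \enquote{the monic part of the chosen image factorisation of $e$ is an isomorphism}. Concretely, this asserts that the chosen image-inclusion $\operatorname{im}(e) \to X$ has a two-sided inverse. That is a $\exists!$-type (or equationally definable) condition, since inverses are unique, so it is cartesian. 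It is correct because in a regular category $e$ is a cover iff its image mono is an isomorphism.

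With this in hand, the corollary follows: the set $\{ Y \in \smC,\, e : Y \coverto X \}$ is $\EATinterp[\Phi][(\smC,X)]$ for this type $\Phi$, and the cited proposition gives $\EATinterp[\Phi][(\smC_\infty,X_\infty)] \iso \colim_i \EATinterp[\Phi][(\smC_i,X_i)]$. The bijection is induced by the canonical maps, because strictly regular functors preserve chosen images and hence covers.

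The only mildly delicate step is expressing \enquote{cover} cartesianly. The inverse to the image inclusion is unique, so using $\exists!$ (permitted in cartesian theories) avoids any issue.
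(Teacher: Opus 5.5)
Your proof is correct and takes the route the paper intends: the corollary is stated with no separate proof, as an immediate consequence of the preceding proposition \cite[D2.4.9]{johnstone:elephant}, since the set of covers of the given object is the interpretation of a cartesian type of the theory of strict regular categories and so is computed pointwise under filtered colimits. Adding a constant for the chosen object, and expressing \enquote{cover} as invertibility of the chosen image mono, simply spells out details the paper leaves implicit.
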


\subsection{The free topos} \label{sec:free-topos-prelims}

Various constructions of the free topos exist in the literature.
All agree up to equivalence, as they are bi-initial in the $2$-category $\LOG$ of toposes and logical functors; this is the defining property of the free topos \cite[D4.3.14(a)]{johnstone:elephant}.
(An object $\cC$ of a $2$-category $\ccK$ is \defemph{bi-initial} if each hom-category $\ccK(\cC,\cD)$ is contractible: there is an essentially unique map from $\cC$ to each object of $\ccK$.)

Most developments characterise it moreover up to isomorphism, as a strictly initial object in some $1$-category
of toposes and strictly logical functors, or of higher-order theories and interpretations (e.g.\ \cite[13.1]{lambek-scott:intro}).
As with \cref{caveat:strict-cats}, however, it is worth noting that this is somewhat fragile:
different \enquote{equivalent} definitions of toposes yield equivalent but not isomorphic constructions of the free topos.

We present here one possible construction.
This is not essential --- we could work throughout with an off-the-shelf version, using just the $2$-categorical universal property --- but it forms a useful warmup for the development of free ranked and internal toposes below.

\begin{definition}
   We denote by $\freeTop$ the \defemph{free strict topos}: that is, the initial object of $\strLog$ (which exists since strict toposes are an essentially algebraic notion, \cref{prop:logical-cats-as-eats}).
\end{definition}

The desired $2$-categorical universal property follows purely formally with the aid of the iso-comma construction, \cref{prop:commas}\zcref[noname]{item:isocomma-topos}.

\begin{proposition} \label{prop:free-topos-is-bi-initial}
  The free strict topos $\freeTop$ is moreover bi-initial in $\LOG$.
\end{proposition}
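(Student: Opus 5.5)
We must show that for every topos $\cE$, the hom-category $\LOG(\freeTop,\cE)$ is contractible. That means two things: it is non-empty, and between any two logical functors $F,G : \freeTop \to \cE$ there is exactly one natural isomorphism. Recall that $2$-cells in $\LOG$ are natural isomorphisms. The plan is to derive both facts from strict initiality in $\strLog$, using the iso-comma construction of \cref{prop:commas}\zcref[noname]{item:isocomma-topos} and its universal property from \cref{prop:comma-is-comma}.

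\emph{Existence.} Every topos $\cE$ carries chosen structure by \cref{convention:chosen-structure}, so it is an object of $\strLog$. Initiality of $\freeTop$ then gives a strictly logical functor $\freeTop \to \cE$, which is in particular logical. This takes care of non-emptiness.

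\emph{Existence of a $2$-cell.} Given logical $F,G : \freeTop \to \cE$, form the iso-comma $\isocomma{F}{G}$. By \cref{prop:commas}\zcref[noname]{item:isocomma-topos} it is a topos whose projections $P_1, P_0$ to $\freeTop$ are strictly logical. Initiality gives a strictly logical $H : \freeTop \to \isocomma{F}{G}$. The composites $P_1H$ and $P_0H$ are strictly logical endofunctors of $\freeTop$, so by initiality both equal $\id_{\freeTop}$. Then $\alpha H : F = FP_1H \iso GP_0H = G$ is the required natural isomorphism.

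\emph{Uniqueness of the $2$-cell.} This is the main point, since strict initiality only controls strict functors, whereas a given $\gamma : F \iso G$ corresponds only to a (weakly) logical functor. Given $\gamma,\delta : F \iso G$, apply \cref{prop:comma-is-comma} (iso version, in $\LOG$). It yields logical functors $K_\gamma, K_\delta : \freeTop \to \isocomma{F}{G}$ with $P_iK = \id$ on the nose and $\alpha K_\gamma = \gamma$, $\alpha K_\delta = \delta$. Part \zcref[noname]{item:functo-to-comma-regular} of that proposition, in its strict form, says a functor into the iso-comma is strict exactly when its projections are. Since $P_1K_\gamma = P_0K_\gamma = \id$ are strictly logical, $K_\gamma$ is strictly logical, and likewise $K_\delta$. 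Initiality then forces $K_\gamma = K_\delta = H$, and hence $\gamma = \alpha H = \delta$.

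The step most in need of care is this strictness transfer. We must check that \cref{prop:comma-is-comma}\zcref[noname]{item:functo-to-comma-regular} extends from regular to logical structure in the iso-comma, i.e.\ that the power-object and NNO structure of $\isocomma{F}{G}$ are defined componentwise on objects, with map components uniquely determined. This is exactly what the proof of \cref{prop:commas} describes, since power-objects are functorial in isomorphisms.
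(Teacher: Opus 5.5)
Your proof is correct and follows the paper's argument: inhabitedness comes from initiality in $\strLog$, and the unique $2$-cell comes from strict initiality applied to $\isocomma{F}{G}$. The paper uses \cref{prop:comma-is-comma} to identify natural isomorphisms with \emph{strictly} logical lifts of $(\id,\id)$, and you simply make explicit the strictness transfer that the paper leaves implicit.

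One small correction: $\LOG$ contains locally small, possibly large toposes, so $\cE$ is not literally an object of $\strLog$. The paper instead maps $\freeTop$ into the small (indeed countable) subtopos of $\cE$ generated by the chosen operations. The same remark applies to your use of initiality for the possibly large $\isocomma{F}{G}$.
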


\begin{proof}
  We need to show that each hom-category $\LOG(\freeTop, \cE)$ is contractible.
  
  For inhabitedness, take some choice of topos structure on $\cE$; then the substructure generated by its operations is a small (indeed, countable) subtopos $\smE \subseteq \cE$, into which $\freeTop$ maps by its initiality in $\strLog$.
  
  It remains to show that for any logical $F_0, F_1 : \freeTop \to \cE$, there is a unique natural isomorphism $F_0 \iso F_1$.
  But by \cref{prop:comma-is-comma}, these correspond precisely to strictly logical functors $\freeTop \to \isocomma{F_1}{F_0}$ over $(F_1,F_0) : \freeTop \to \cE \times \cE$,
  unique existence of which is immediate by strict initiality of $\freeTop[r]$.
  \[ \begin{tikzcd}[row sep=normal,column sep=large,
                    baseline=(\tikzcdmatrixname-2-1.base)]
      & \isocomma{F_1}{F_0} \ar[d,"{(P_1,P_0)}"] \ar[r] & \cE^{\iso} \ar[d,"{(s,t)}"] \\
      \freeTop \ar[ur,dashed,bend left=10] \ar[r,"\Delta_{\freeTop}"] & \freeTop \times \freeTop \ar[r,"F_1 \times F_0"] & \cE \times \cE
    \end{tikzcd} \qedhere \]
\end{proof}

\begin{remark}
  The rôle of the iso-comma topos here, and specifically the fact that topos structure is closed only under \emph{iso}-comma categories, illustrates why the $2$-cells of $\LOG$ are taken as just natural isomorphisms:
  with non-invertible transformations included, $\freeTop$ would not be bi-initial.
  
  For regular or cartesian categories (say), since comma categories are available, the analogous argument gives a bi-initial object with respect to all natural transformations. 
  
  At base this comes down to the fact that cartesian/regular structure is all covariantly functorial, whereas toposes include both co- and contra-variant structure.
  \comment{Is there some precise statement along these lines in the literature, or useful discussion? It’s a heuristic I (PLL) have become gradualy aware of over the years but don’t remember any specific sources for.}
\end{remark}

\comment{What is the closest to this approach found in the literature?  All treatments I’ve checked (which did this mean?) use internal-language methods to construct $\freeTop$ and show it’s (bi-)initial.  The construction+initiality here is only superficially different from that approach, but the presentation of bi-initiality is possibly novel? I (PLL) taught it with Mike Shulman in a summer school in 2016; I’m not sure whether one of us had learned it from somewhere, or if we worked it out together there.  How to concisely and appropriately say this and acknowledge Mike?}

We can now give Freyd’s classic categorical proof of the existence property for higher-order logic (\cite[\S 6]{lambek-scott:itt-and-the-free-topos}, \cite[1.(10)3]{freyd-scedrov:categories-allegories}).

\comment{What’s a canonical citation for Freyd gluing, and application to projectivity?  Can’t find an original Freyd paper. It’s in \cite[1.(10)3]{freyd-scedrov:categories-allegories}.  Another option is Lambek–Scott 1980 paper, “Intuitionist type theory and the free topos”, or else Lambek–Scott book.  Where is it in the Elephant?  Look in Moerdijk \cite{moerdijk:glueing-topoi}, it has good early citations.
-- Couldn't find anything better. Elephant promises full treatment in part F}

\begin{proposition}[{Freyd gluing}] \label{prop:freyd-gluing-topos}
  The terminal object of $\freeTop$ is projective.
\end{proposition}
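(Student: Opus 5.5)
We will run Freyd's classical gluing argument, using the Artin gluing construction of \cref{prop:artin-gluing-topos}. Take the global sections functor $\Gamma \defeq \freeTop(1,-) : \freeTop \to \Set$. It preserves all limits, since it is representable, so in particular it is cartesian. By \cref{prop:artin-gluing-topos}, the comma category $\cG \defeq \comma{\Set}{\Gamma}$ is therefore a topos. Its objects are triples $(S, X, \varphi : S \to \Gamma X)$, its projection $P_0 : \cG \to \freeTop$ is strictly logical, and its projection $P_1 : \cG \to \Set$ is strictly regular. Size is not a problem here: $\freeTop$ is small, so we may replace $\Set$ by a small enough topos containing the relevant sets, or simply apply the argument in $\LOG$.

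The next step is to produce a section of $P_0$. Choose the topos structure on $\cG$ so that $P_0$ preserves it strictly; this is possible because, by \cref{prop:artin-gluing-topos}, the structure on $\cG$ is given componentwise in the $\freeTop$-coordinate. Initiality of $\freeTop$ in $\strLog$ then gives a strictly logical functor $G : \freeTop \to \cG$. The composite $P_0 G : \freeTop \to \freeTop$ is also strictly logical, so initiality again forces $P_0 G = \id$. Hence $G$ sends each $X$ to some triple $(S_X, X, \varphi_X : S_X \to \Gamma X)$. If one prefers to work only up to isomorphism, \cref{prop:free-topos-is-bi-initial} gives $P_0 G \iso \id$, which is equally good for the argument below.

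Now let $e : Y \coverto 1$ be a cover in $\freeTop$. Since $G$ is logical, it is regular, so $G e : (S_Y, Y, \varphi_Y) \to G 1$ is a cover in $\cG$. The functor $G$ preserves the terminal object, and the terminal object of $\cG$ is $(1, 1, \id)$ componentwise. The regular projection $P_1$ preserves covers, so $S_Y \to 1$ is a cover, that is, a surjection in $\Set$. Pick any $s \in S_Y$. Then $\varphi_Y(s) \in \Gamma Y = \freeTop(1,Y)$ is a global section of $Y$. Naturality of $\varphi$ along $e$ shows that $e \circ \varphi_Y(s) = \varphi_1(!)$, which is the unique map $1 \to 1$. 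So $\varphi_Y(s)$ splits $e$, and by \cref{prop:projective-equivalent-split} the terminal object is projective.

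The main point to check carefully is the claim that $G$ carries $1 \in \freeTop$ to a terminal object of $\cG$ whose set component is a singleton, and that covers in $\cG$ have surjective $\Set$-components. Both facts follow from the componentwise description of the regular structure in \cref{prop:artin-gluing-topos}, in particular from the strict regularity of $P_1$. With that established, the argument is essentially formal.
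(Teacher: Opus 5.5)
Your proposal is correct and takes essentially the same route as the paper: glue $\Set$ along $\Gamma$, use initiality to obtain a section of the strictly logical projection $P_0$, then take an element of the $\Set$-component of the interpreted cover $A \coverto 1$, which is surjective because $P_1$ is regular, and push it through $\rho_A$ (your $\varphi_Y$) to get a global section. The paper applies initiality in $\strLog$ to the large topos $\comma{\Set}{\Gamma}$ without comment; your remark handling this size issue, via a small sub-topos or bi-initiality in $\LOG$, addresses a point the paper passes over.
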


\begin{proof}
  Consider the Artin gluing category $\comma{\Set}{\Gamma}$ of the global sections functor $\Gamma : \freeTop \to \Set$, with projection functors $P_0 : \comma{\Set}{\Gamma} \to \freeTop$, $P_1 : \comma{\Set}{\Gamma} \to \Set$. 
  This is a topos, by \cref{prop:artin-gluing-topos};
  so initiality gives a strictly logical \defemph{interpretation} functor $\HOLinterp : \freeTop \to \comma{\Set}{\Gamma}$, and since $P_0$ is strictly logical, $P_0 \HOLinterp = \id_{\freeTop}$.
  Write $\HOLinterp_1$ for the composite $P_1 \HOLinterp : \freeTop \to \Set$, and $\rho$ for the natural transformation $\HOLinterp_1 \to\Gamma P_0 \HOLinterp = \Gamma$.
  \[ \begin{tikzcd}
    & \comma{\Set}{\Gamma} \ar[d,"P_0"] \ar[r,"P_1"]
    & \Set \ar[d,"\id"] \ar[dl,Rightarrow,"\rho"',shorten=6mm]
    \\ \freeTop[r] \ar[ur,dashed,"\HOLinterp"] \ar[r,"\id"]
     & \freeTop[r] \ar[r,"\Gamma"]
     & \Set 
    \end{tikzcd}
  \]

  Now for any cover $e : A \coverto 1$ in $\freeTop$, its interpretation $\HOLinterp[e]$ is a cover $(\HOLinterp[e]_1,e):(\HOLinterp[A]_1,A,\rho_A)\to (1,1,\rho_1)$.
  \[ \begin{tikzcd}
    A \ar[d,->>,"e"'] & & \HOLinterp[A]_1 \ar[r,"\rho_A"] \ar[d,->>,"{\HOLinterp[e]_1}"'] & \Gamma(A) \ar[d,"\Gamma(e)"]
  \\ 1 & & \mathllap{1 = {}}\HOLinterp[1]_1 \ar[r,"\rho_1"] & \Gamma(1) \mathrlap{{}\cong 1}
    \end{tikzcd}
  \]
Then $\HOLinterp[e]_1$ is a cover, by \cref{prop:commas}\zcref[noname]{item:comma-regular};
  that is, $\HOLinterp[A]_1$ inhabited.
  So taking some $x \in \HOLinterp[A]_1$, $\rho_A(x)$ gives a global section of $A$ as required.
\end{proof}

\comment{Look back over treatments in \cite{lambek-scott:intro} and \cite[1.(10)]{freyd-scedrov:categories-allegories}.  They each have abstractions of this, making it slightly less elementary, but allowing them (at least Freyd–Scedrov) to extract slightly more general conclusions.  Do we want to stick with the elementary version, or bump it up a little? Or note any of those conclusions here, or consider if we might be able to generalise them along with $\RCC$ somehow?}

\section{Ranked toposes} \label{sec:ranked-toposes}

\begin{definition}
  For $r \in \N \cup \{ \infty \}$, an \defemph{$r$-ranking} on a category $\cE$ is a sequence of distinguished classes of objects $\cE_0 \subseteq \cE_1 \subseteq \cdots \subseteq \cE_i \cdots \subseteq \ob \cE$, for $0 \leq i < r$.
  %

  A functor $f : \cE \to \cF$ of $r$-ranked categories \defemph{strictly (resp.~weakly) preserves rank} if for each $x \in \cE_i$, $fx$ lies in $\cF_i$ (resp.~isomorphic to some object of $\cF_i$).
  As usual we take \defemph{preserves rank}, unqualified, to mean the weak version.
\end{definition}

We will view the classes $\cE_i$ as full subcategories of $\cE$,
and say objects have rank $ \leq i$ if they lie in $\cE_i$.
Note rank here is always cumulative — we never distinguish the precise minimal rank of objects — 
and is not assumed invariant under isomorphism.

For the remainder of the paper, $r$ will by default range over $\N \cup \{\infty\}$.

\begin{definition}
  An \defemph{$r$-ranked topos} is a regular category $\cE$ with NNO, $r$-ranked such that: 
  \begin{enumerate}
  \item each $\cE_i$ is a regular subcategory of $\cE$ and contains the NNO;
  \item for each $0 \leq i < r-1$, each object of $\cE_i$ has a power-object in $\cE$, lying in $\cE_{i+1}$.
  \end{enumerate}
  We will often write just \defemph{$r$-topos}; of course this should not be confused with the more established higher-categorical senses of \emph{$n$-topos}. 

  A functor of $r$-toposes is \defemph{(strictly) $r$-logical} if it is (strictly) regular and rank-preserving, and (strictly) preserves the NNO and the assumed power-objects.
  More rarely, we will call a functor from an $r$-topos just \defemph{logical} if it preserves the regular structure, NNO, and assumed power-objects, but is not necessarily rank-preserving; we will note clearly when we mean this!

  Following the convention of \cref{def:cats-of-cats}, we write $\strLog[r]$, $\Log[r]$, $\LOG[r]$ for the $1$- and $2$-categories of small/locally small $r$-toposes, $r$-logical functors (strict for $\strLog[r]$), and natural isomorphisms (omitted in $\strLog[r]$).
\end{definition}

As usual, we say \defemph{strict $r$-topos} to emphasise considering a small $r$-topos as an object of $\strLog[r]$.
Note however that even then, the chosen regular structure and power objects on the rank subcategories $\cE_i$ are not assumed strictly preserved by the inclusions between ranks or into $\cE$.

\begin{remark} \label{rmk:what-if-ranks-aus}
  One could strengthen the definition of $r$-toposes to assume each rank is not just regular but in fact an \emph{arithmetic universe} (\cref{def:locos-and-au}).
  For further development of ranked toposes as a logical setting in their own right, that would probably be more natural.
  In the present article, however, there would be little benefit, and it would substantially complicate the construction of internal concrete $r$-toposes via ranked universes (\cref{sec:universes}),
  We return to this point in \cref{rmk:universes-for-aus} below.
\end{remark}

\subsection{Strict $1$-categorical aspects}

It is easy to check that $r$-toposes are essentially algebraic, along similar lines to \cref{prop:logical-cats-as-eats}:

\begin{proposition} \leavevmode \label{prop:r-toposes-as-eats}
  \begin{enumerate} \setcounter{enumi}{-1}
  \item For each $r$, $r$-ranked categories and strictly rank-preserving functors are the models and homomorphisms of an essentially algebraic theory.
  \item For each $r$, strict $r$-toposes and strict $r$-logical functors are the models and homomorphisms of an essentially algebraic theory $\thT_r$.
  \item For $r \in \N$, this theory is finitely presented. 
  \item The evident forgetful functors $\strLog[0] \from \strLog[1] \from \cdots \from \strLog[\infty]$ correspond to theory extensions $\thT_r \injto \thT_s$, and so are finitary functors of locally finitely presentable categories.
  \item \label{item:inf-topos-eat-as-colimit}
  This exhibits $\strLog[\infty]$ as a (strict $2$-)limit of the tower of categories $\strLog[r]$, in both $\CAT$ and $\LFP$ (equivalently, exhibits the theory of $\infty$-ranked toposes as the colimit of the theories of $r$-toposes).
    \qed
  \end{enumerate}
\end{proposition}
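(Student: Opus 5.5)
We write the theories down explicitly, following the pattern of \cref{prop:logical-cats-as-eats}.
For part (0), start from the cartesian theory of categories and add, for each $0 \leq i < r$, a unary predicate $E_i$ on objects together with axioms $E_i(x) \vdash E_{i+1}(x)$. Predicates are allowed in cartesian theories; alternatively, $E_i$ can be encoded as a sort with an injective map into objects. Models are $r$-ranked categories. Homomorphisms must preserve the predicates, and this is exactly strict rank-preservation.

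For part (1), add to the theory of strict regular categories the following:
\begin{itemize}
\item constants for the NNO $N$, zero and successor, with the universal property expressed essentially algebraically as a total operation producing the recursor map and an axiom for its uniqueness;
\item axioms $E_i(N)$, and closure of each $E_i$ under the chosen finite limits and images, so that each $\cE_i$ is a regular subcategory;
\item for each $i < r-1$, partial operations $P_i$ and $\in_i$, defined on objects satisfying $E_i$, with $E_{i+1}(P_i x)$.
\end{itemize}
The universal property of power objects quantifies over monomorphisms $m : R \to X \times Y$. As in \cref{prop:logical-cats-as-eats}, we handle this by a partial operation defined on those $m$ whose kernel-pair projections are equal. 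It returns the classifying map, subject to a pullback axiom and a uniqueness axiom. All of these are Horn or cartesian sequents.

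The models are then $r$-toposes, and the homomorphisms strictly preserve every chosen operation together with the rank predicates. So homomorphisms are precisely strict $r$-logical functors. A non-strict preservation of the chosen regular structure of the $\cE_i$ never arises, because that structure is inherited from $\cE$.

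For part (2), take $r \in \N$. Then only finitely many predicates $E_i$ and finitely many operation families $P_i$ occur, and the underlying regular-category-with-NNO theory is finitely presented. So $\thT_r$ is finitely axiomatised, and \cref{thm:eats-characterisations}\zcref[noname]{item:eat-facts-finiteness} applies.

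For part (3), note that $\thT_r$ is literally a subset of the signature and axioms of $\thT_s$ whenever $r \leq s$. Forgetting the extra symbols gives the forgetful functor. This reduct is a right adjoint that preserves filtered colimits, because in $\Set$-models these are computed sortwise; hence it is finitary.

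Part (4) is the step needing some care. Since $\thT_\infty$ is the union of the increasing chain $\thT_r$, a $\thT_\infty$-model is exactly a compatible family of $\thT_r$-models with the same underlying structure, and likewise for homomorphisms. So $\strLog[\infty]$ is the strict limit of the tower in $\CAT$.

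For the limit in $\LFP$, we must also check that a cone of finitary right adjoints out of an lfp category $\cM$ factors through $\strLog[\infty]$ by a finitary right adjoint. Finitarity and limit preservation both hold because they are detected by the underlying sorts. Then, by Gabriel–Ulmer duality, \cref{thm:eats-characterisations}\zcref[noname]{item:eat-facts-duality}, this is dual to the statement that the syntactic category of the union theory is the filtered colimit of the syntactic categories $\syncat{\thT_r}$ in $\Lex$. That holds because every formula and every derivation in $\thT_\infty$ uses only finitely many symbols and axioms.

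The main obstacle I expect is in part (4): verifying the $\LFP$ limit cleanly. The most direct route is through this compactness-of-derivations argument on syntactic categories, rather than an abstract argument about limits.
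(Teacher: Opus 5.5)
Your route is the one the paper intends. The paper gives no proof beyond ``along similar lines to \cref{prop:logical-cats-as-eats}''. Your treatment of (0) and (2)--(4) is fine, including checking the $\LFP$ limit directly rather than through a general theorem. The gap is in (1): you axiomatise a stricter notion than the paper's.

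You require each $E_i$ to be closed under the chosen finite limits and images of $\cE$, and you assert that the regular structure of the ranks is inherited from $\cE$. In the paper's definition, however, each $\cE_i$ is a regular subcategory. So, by \cref{convention:chosen-structure}, it carries its \emph{own} chosen regular structure. Right after introducing $\strLog[r]$, the paper says explicitly that the chosen regular structure and power-objects of the ranks are not assumed strictly preserved by the inclusions between ranks or into $\cE$.

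Consider a strict $r$-topos in which the chosen $\cE$-product of two rank-$i$ objects is only isomorphic to an object of $\cE_i$. This is allowed, since rank is not isomorphism-invariant, but it is not a model of your $\thT_r$. Your theory therefore captures only those strict $r$-toposes whose rank-wise structure is the restriction of that of $\cE$. Your identification of homomorphisms with strict $r$-logical functors is then about a different category. Your separate $P_i$ for each $i$ already handles the power-object half of this correctly.

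The paper relies on this looseness later. For the internal sub-$r$-topos obtained from an $r$-universe in \cref{sec:universes}, via \cref{prop:internal-r-topos-from-indexed}, each $\Ub_i$ carries its own product and subobject operations. The product operation of $\Ub$ need not send $\Ub_i \times \Ub_i$ into $\Ub_i$. Nor is there an evident way to re-choose it so that it does, since $\Ub_i \injto \Ub$ need not be complemented, so you cannot define it by cases on rank. Under your axioms, such internal sub-$r$-toposes would not be models.

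The repair is routine. Replace your closure axioms with the following, for each $i < r$:
\begin{enumerate}
\item separate partial operations (terminal object, pullbacks, image factorisations), defined on $E_i$-data and landing in $E_i$;
\item axioms saying these are limits and images in $\cE$, for instance operations providing inverses to the comparison maps into $\cE$'s chosen structure.
\end{enumerate}
Each rank still contributes finitely many symbols and axioms, and $\thT_r \subseteq \thT_s$ remains a literal inclusion. Homomorphisms now also preserve the rank-wise structure, as strictness requires. So your arguments for (2)--(4) go through unchanged.
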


\begin{definition}
  Write $\freeTop[r]$ for the \defemph{free (strict) $r$-topos}, i.e.~the (strictly) initial object of $\strLog[r]$, for each $r \in \N \cup \{\infty \}$.
\end{definition}

We will omit \enquote{strict} and call $\freeTop[r]$ just the \defemph{free $r$-topos}, justified by \cref{prop:free-r-topos-is-bi-initial} below.

\begin{proposition} \label{prop:free-inf-top-filtered-colim}
  $\freeTop[\infty] \iso \colim_{r \in \N} \freeTop[r]$, as a strict filtered colimit of categories. 
\end{proposition}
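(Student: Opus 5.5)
The plan is to apply \cref{lem:initial-object-of-limit-lfpcat} to the tower of categories of models from \cref{prop:r-toposes-as-eats}. Take $\cI = \N$, which is filtered, and the diagram $r \mapsto \strLog[r]$ with the forgetful functors $U_r : \strLog[s] \to \strLog[r]$ for $r \leq s$. By \cref{prop:r-toposes-as-eats}, each $\strLog[r]$ is lfp, each $U_r$ is finitary, and by part \zcref[noname]{item:inf-topos-eat-as-colimit} the category $\strLog[\infty]$, with its forgetful functors, is a strict limit of this tower in both $\CAT$ and $\LFP$. This is exactly the hypothesis of the lemma. Taking $A_r = \freeTop[r]$, the lemma then gives, for each $r$,
\[ U_r \freeTop[\infty] \iso \colim_{s \geq r} U_r \freeTop[s] \quad \text{in } \strLog[r]. \]

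Next I pass from this family of statements to one colimit of categories. The underlying-category functor $\strLog[r] \to \strCat$ is induced by the theory inclusion of categories into $r$-toposes. Since that theory is finitary, this functor preserves filtered colimits: filtered colimits in any $\Mod[\Set]{\thT}$ are computed sortwise, by the cited fact that filtered colimits commute with finite limits. Taking $r = 0$, the underlying category of $\freeTop[\infty]$ is therefore the filtered colimit in $\strCat$ of the underlying categories of the $\freeTop[s]$. The transition maps are the unique strict $s$-logical functors $\freeTop[s] \to U_s \freeTop[s']$ given by initiality. It remains to check that the connecting maps in the lemma are these canonical ones. This holds because the lemma builds the colimit from the unique maps $A_j \to U_j X$, so everything is forced by initiality.

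To strengthen this as far as possible, I would also note that applying the same reasoning at each $r$ shows the colimit computes the rank-$r$ structure too: it identifies $\freeTop[\infty]$ with $\colim_s \freeTop[s]$ as a ranked category with all its operations. Only the statement about categories is required, however.

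I expect the main obstacle to be bookkeeping rather than mathematics. The lemma produces an isomorphism in $\strLog[r]$ for each finite $r$, whereas the statement is about $\freeTop[\infty]$ itself. The bridge is that an object of the strict limit $\strLog[\infty]$ is determined by its compatible family of images $U_r$. Concretely, the $\infty$-topos operations on the colimit are assembled from the rank-$r$ operations, which are defined on the finite stages. So the isomorphism $U_0\freeTop[\infty] \iso \colim_s U_0 \freeTop[s]$ is an isomorphism of plain categories, and it is compatible with the colimit injections.
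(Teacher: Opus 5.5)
Your proposal is correct and follows the paper's proof, which is just the direct application of \cref{lem:initial-object-of-limit-lfpcat} to the tower $\strLog[r]$, with the hypotheses supplied by \cref{prop:r-toposes-as-eats}. Your extra step of taking $r = 0$ and passing to underlying categories (since filtered colimits of models are computed sortwise) makes explicit what the paper leaves implicit.
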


\begin{proof}
  Direct by \cref{lem:initial-object-of-limit-lfpcat}, since as noted in \cref{prop:r-toposes-as-eats}(\ref{item:inf-topos-eat-as-colimit}), $\strLog[\infty] = \lim_{r \in \N} \strLog[r]$ both in $\CAT$ and $\LFP$.
\end{proof}

\subsection{$2$-categorical aspects}

Bi-initiality of the free strict $r$-topos in $\LOG[r]$ will follow purely formally from its $1$-initiality together with a careful construction of iso-comma $r$-toposes, just as we showed for ordinary toposes in \cref{sec:free-topos-prelims}.

\begin{proposition} \label{prop:iso-comma-r-topos}
  Suppose $\cC_0 \to[F_0] \cD \from[F_1] \cC_1$ are $r$-toposes and logical functors (not necessarily rank-preserving).
  Then:
  \begin{enumerate}[(1)]
  \item $\isocomma{F_1}{F_0}$ is an $r$-topos, where $(c_1,c_0,\varphi)$ is taken to have rank $\leq i$ just if both $c_0$ and $c_1$ do;
  \item the projection functors $P_i : \isocomma{F_1}{F_0} \to \cC_i$ are strictly $r$-logical;
  \item a functor $G : \cX \to \isocomma{F_1}{F_0}$ is $r$-logical (resp.~strictly $r$-logical, rank-preserving) if its projections $P_i G : \cX \to \cC_i$ both are;
  \item for $r$-logical $G_i : \cX \to \cC_i$, natural transformations $\gamma : F_1 G_1 \to F_0 G_0$ correspond bijectively to $r$-logical functors $G : \cX \to \comma{F_1}{F_0}$ lifting $(G_1,G_0) : \cX \to \cC_1 \times \cC_0$.
\end{enumerate}
\end{proposition}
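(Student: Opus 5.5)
The plan is to build everything componentwise, reducing to \cref{prop:commas}\zcref[noname]{item:isocomma-topos} and \cref{prop:comma-is-comma} and checking only what the ranking adds. First I will show that $\isocomma{F_1}{F_0}$ is regular with NNO, and that the projections are strictly regular and preserve the NNO. Since $F_0$ and $F_1$ are regular, \cref{prop:commas}\zcref[noname]{item:isocomma-topos} gives the regular structure componentwise. The NNO is $(\NNO_1,\NNO_0,\varphi)$, where $\varphi : F_1\NNO_1 \iso F_0\NNO_0$ is the canonical comparison; this exists because both $F_i$ preserve NNOs. I will declare rank $\leq i$ to mean that both components lie in the respective $\cC_{\cdot,i}$. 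Each rank class is then a regular subcategory and contains the NNO, because the componentwise structure maps land in the component ranks and each $\cC_{\cdot,i}$ is closed under them. I will take care here because ranks are not iso-invariant: the chosen limits and images of rank-$i$ objects must lie in rank $i$, and they do, since they are computed componentwise in $\cC_{\cdot,i}$.

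The next step is power-objects. Take $X = (c_1,c_0,\varphi)$ of rank $i < r-1$. Each $c_k$ has its assumed power-object $\pow c_k \in \cC_{k,i+1}$, and $F_k$ preserves it, so we get the composite isomorphism $F_1\pow c_1 \iso \pow F_1 c_1 \iso \pow F_0 c_0 \iso F_0 \pow c_0$. Here the middle map is $(\varphi^{-1})^*$, i.e.\ the covariant-iso action of $\pow$. The membership relation is the pair of memberships $\in_{c_k}$, glued by the evident isomorphism; it is a subobject of $X \times \pow X$ because $F_k$ preserves $\in$. I will verify the universal property componentwise. Given $R \injto X \times Y$, the components classify maps $Y_k \to \pow c_k$, and these commute with the gluing isomorphisms by naturality of the comparison maps $\xi$. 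This is exactly the power-object argument behind \cref{prop:commas}\zcref[noname]{item:isocomma-topos}, now restricted to the objects where it is needed. The resulting power object lies in rank $i+1$ by construction, and the projections preserve it on the nose, which gives (1) and (2).

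For (3) I will combine part \zcref[noname]{item:functo-to-comma-regular} of \cref{prop:comma-is-comma} for the regular part with componentwise checks. The NNO and power-objects are handled that way because their structure maps are determined by the components; this is the map-component uniqueness argument from the earlier proof. Rank preservation holds because the rank of $GX$ is defined componentwise. There is one subtlety in the weak case. If $P_kGx \iso c'_k \in \cC_{k,i}$ for each $k$, then transporting the gluing isomorphism along these isomorphisms gives an object of rank $i$ isomorphic to $Gx$. So weak rank preservation also reduces to the components.

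For (4), \cref{prop:comma-is-comma} already provides the bijection between natural isomorphisms $\gamma : F_1G_1 \iso F_0G_0$ and regular lifts $G$ (read with $\isocomma{F_1}{F_0}$, as in the iso-comma clause there). By (3), such a $G$ is $r$-logical exactly when the $G_k$ are. The bijection therefore restricts as claimed, with $\gamma$ invertible.

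The main obstacle I expect is the power-object step in the second paragraph. I need it to be coherent with the chosen structure, and I need the resulting object to satisfy the universal property against all objects of the iso-comma, including those of unbounded rank.
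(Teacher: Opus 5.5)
Your proposal is correct and follows the same route as the paper, whose proof just says the result is a straightforward modification of \cref{prop:commas,prop:comma-is-comma}. You carry out exactly those componentwise checks: the rank levels, the power objects of rank-$i$ objects glued along the unique isomorphism $F_1\pow c_1 \iso F_0\pow c_0$, and the transport argument for weak rank-preservation. The gluing never uses ranks in $\cD$, which is why the $F_k$ need not be rank-preserving. Your reading of (4) with $\isocomma{F_1}{F_0}$ and natural \emph{isomorphisms} is the intended one, and the plain comma category in the statement is a slip: as the paper remarks after \cref{prop:free-topos-is-bi-initial}, topos structure is closed only under iso-commas.
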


\begin{proof}
  Straightforward modification of the proofs for ordinary toposes in \cref{prop:commas,prop:comma-is-comma}.
\end{proof}

\begin{proposition}\label{prop:free-r-topos-is-bi-initial}
  For each $r$, the free strict $r$-topos $\freeTop[r]$ is moreover bi-initial in $\LOG[r]$.
\end{proposition}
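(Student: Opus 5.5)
We follow the proof of \cref{prop:free-topos-is-bi-initial} essentially verbatim, replacing toposes by $r$-toposes and appealing to \cref{prop:iso-comma-r-topos} in place of \cref{prop:commas,prop:comma-is-comma}. We must show that for each locally small $r$-topos $\cE$, the hom-category $\LOG[r](\freeTop[r],\cE)$ is contractible: it is inhabited, and any two objects of it are related by a unique natural isomorphism.

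For inhabitedness, fix the chosen $r$-topos structure on $\cE$ (regular structure, NNO, chosen power-objects of objects in each rank, and the rank classes $\cE_i$). Close the NNO under the structure operations to obtain the smallest sub-collection $\smE$ of objects and maps of $\cE$ that is closed under all these operations. Give it the ranking $\smE_i \defeq \smE \cap \cE_i$. Since all the operations are finitary and there are only countably many of them, $\smE$ is small (indeed countable). We then check that $\smE$ is a strict $r$-topos, with the inclusion $\smE \injto \cE$ strictly $r$-logical. This needs a little care: the image factorisations and power-objects chosen for objects of $\smE_i$ land in $\smE_i$ and $\smE_{i+1}$ respectively, because $\smE$ is closed under the operations of each $\cE_i$. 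Moreover, universal properties computed in $\cE$ restrict to $\smE$, since $\smE$ is a full-enough substructure containing the relevant witnesses. Strict initiality of $\freeTop[r]$ in $\strLog[r]$ then gives a strictly $r$-logical functor $\freeTop[r] \to \smE \injto \cE$.

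For uniqueness, let $F_0, F_1 : \freeTop[r] \to \cE$ be $r$-logical. By \cref{prop:iso-comma-r-topos}, $\isocomma{F_1}{F_0}$ is an $r$-topos whose projections are strictly $r$-logical. Natural isomorphisms $F_1 \iso F_0$ correspond bijectively to $r$-logical functors $G : \freeTop[r] \to \isocomma{F_1}{F_0}$ with $P_1 G = F_1$ and $P_0 G = F_0$ (the iso-comma analogue of part~(4)). Such functors correspond in turn, by part~(3) together with strictness of the projections, to strictly $r$-logical functors $\freeTop[r] \to \isocomma{F_1}{F_0}$ over $(\id,\id) : \freeTop[r] \to \freeTop[r] \times \freeTop[r]$.

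The main obstacle is a size issue: $\isocomma{F_1}{F_0}$ need not be small, so we cannot apply $1$-initiality directly. We handle this by restricting to the iso-comma of $F_1$ and $F_0$ corestricted to a small strict sub-$r$-topos of $\cE$ through which both factor; one exists by the generation argument above, applied to the images of $F_0$ and $F_1$. Alternatively, we take $\isocomma{\id}{\id}$ of $\freeTop[r]$ pulled back along $\Delta$, as in the diagram of \cref{prop:free-topos-is-bi-initial}. Strict initiality in $\strLog[r]$ then gives a unique strictly $r$-logical map $\freeTop[r] \to \isocomma{F_1}{F_0}$. Composing with the strict projections yields strictly $r$-logical endofunctors of $\freeTop[r]$, which by initiality are the identity. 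So this unique map lies over $(\id,\id)$, which gives existence and uniqueness of the natural isomorphism $F_1 \iso F_0$.

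A secondary subtlety is that $F_i$ preserve rank only weakly. This is exactly why \cref{prop:iso-comma-r-topos} was stated for logical functors that are not necessarily rank-preserving, with ranks in the iso-comma defined componentwise, so that the projections are strictly rank-preserving.
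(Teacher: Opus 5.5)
Your argument follows the paper's: inhabitedness comes from the small sub-$r$-topos of $\cE$ generated by its chosen operations. Uniqueness comes from identifying natural isomorphisms $F_1 \iso F_0$ with strictly $r$-logical functors $\freeTop[r] \to \isocomma{F_1}{F_0}$ over $(\id,\id)$, and then invoking strict initiality. One small slip: your intermediate condition \enquote{$P_1 G = F_1$, $P_0 G = F_0$} is ill-typed. The projections of $\isocomma{F_1}{F_0}$ land in $\freeTop[r]$, not $\cE$, so the condition is $P_1 G = P_0 G = \id$, which is what you then use.

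However, the paragraph you call the main obstacle is mistaken and should be deleted, because there is no size issue. An object of $\isocomma{F_1}{F_0}$ is a pair of objects of the small category $\freeTop[r]$, together with an isomorphism between their images in the locally small category $\cE$. So $\isocomma{F_1}{F_0}$ is small, and with the structure of \cref{prop:iso-comma-r-topos} it is an object of $\strLog[r]$. Strict initiality therefore applies to it directly, which is all the paper's one-line proof uses, and is in fact what your own final step does.

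The workarounds you suggest would not help:
\begin{enumerate}
\item The pullback in the diagram of \cref{prop:free-topos-is-bi-initial} is of $\cE^{\iso} \to \cE \times \cE$ along $F_1 \times F_0$. That is just $\isocomma{F_1}{F_0}$ again, not anything built from $\isocomma{\id}{\id}$ on $\freeTop[r]$.
\item An iso-comma of corestrictions to a generated sub-$r$-topos $\smE' \subseteq \cE$ only sees isomorphisms lying in $\smE'$. Initiality there would therefore not give uniqueness among all natural isomorphisms $F_1 \iso F_0$ in $\cE$. It would also require checking that the corestrictions are still $r$-logical, e.g.\ that inverses of the comparison maps lie in $\smE'$, since a substructure closed under the operations need not contain them.
\end{enumerate}
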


\begin{proof}
  Purely formal with the aid of the iso-comma construction, exactly as in the ordinary topos case, \cref{prop:free-topos-is-bi-initial}.
\end{proof}

\begin{proposition} \label{prop:logical-functor-preserves-rank}
  Any logical functor out of the free $r$-topos $\freeTop[r]$ preserves rank (in the default, weak sense).
\end{proposition}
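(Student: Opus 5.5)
The plan is to use strict initiality of $\freeTop[r]$ in $\strLog[r]$ to show that the ranking on $\freeTop[r]$ is the smallest one compatible with its structure, and hence is contained in any ranking that is pulled back along a logical functor.

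Fix an $r$-topos $\cE$ and a logical functor $F : \freeTop[r] \to \cE$, not assumed rank-preserving. Define a new $r$-ranked category $\freeTop[r]'$ with the same underlying category and the same chosen regular structure, NNO and power-objects as $\freeTop[r]$. Its ranks are
\[ (\freeTop[r]')_i \defeq \{ X \in (\freeTop[r])_i \suchthat FX \iso Y \text{ for some } Y \in \cE_i \}, \]
and we equip each $(\freeTop[r]')_i$ with the chosen regular structure it inherits from $(\freeTop[r])_i$. The claim is then just that $(\freeTop[r]')_i = (\freeTop[r])_i$ for every $i$.

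The first step is to check that $\freeTop[r]'$ is a strict $r$-topos.
\begin{enumerate}
\item \emph{Closure under regular structure.} Take objects of $(\freeTop[r]')_i$ and form the chosen finite limit or image in $(\freeTop[r])_i$. The inclusion $(\freeTop[r])_i \injto \freeTop[r]$ is regular, and $F$ is regular, so $F$ of the result is isomorphic to the corresponding limit or image in $\cE$ of objects isomorphic to objects of $\cE_i$. Since $\cE_i$ is a regular subcategory of $\cE$, that limit or image is in turn isomorphic to one computed in $\cE_i$. So each $(\freeTop[r]')_i$ is a regular subcategory.
\item \emph{NNO.} $F\NNO \iso \NNO_\cE$, and $\NNO_\cE$ is isomorphic to an object of $\cE_i$ by assumption, so the NNO lies in every $(\freeTop[r]')_i$.
\item \emph{Power-objects.} Let $X \in (\freeTop[r]')_i$ with $i < r-1$. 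Then $\pow X \in (\freeTop[r])_{i+1}$, and $F(\pow X) \iso \pow(FX) \iso \pow Y$ for some $Y \in \cE_i$, because $F$ preserves the assumed power-objects and power-objects are invariant under isomorphism. By the axioms on $\cE$, $\pow Y$ lies in $\cE_{i+1}$.
\end{enumerate}

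Next, the identity functor $J : \freeTop[r]' \to \freeTop[r]$ is strictly $r$-logical, since the ranks only shrink and all the chosen structure is literally the same. Initiality of $\freeTop[r]$ in $\strLog[r]$ gives a strictly $r$-logical $K : \freeTop[r] \to \freeTop[r]'$. The composite $JK$ is then a strictly $r$-logical endomorphism of $\freeTop[r]$, so $JK = \id$ by initiality. Hence $K$ is the identity on objects, and strict rank preservation of $K$ gives $(\freeTop[r])_i \subseteq (\freeTop[r]')_i$. By definition of $(\freeTop[r]')_i$ this says exactly that $F$ preserves rank in the weak sense.

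The main obstacle I expect is the first step, specifically the regularity clause. One has to be careful that the chosen regular structure of each rank $(\freeTop[r])_i$ need not agree on the nose with that of $\freeTop[r]$, and likewise for $\cE_i$ and $\cE$. So closure must be argued only up to isomorphism, using that the inclusions of ranks are regular functors. This is precisely why the statement is about weak rank preservation and not strict.
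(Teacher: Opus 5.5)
Your argument is correct, and it takes a different route from the paper's. The paper does not shrink the ranking of $\freeTop[r]$. Instead it forms the iso-comma $\isocomma{F}{\cE}$ of $F$ and $\id_\cE$ with the levelwise ranking, which is an $r$-topos even when $F$ is not rank-preserving. Initiality gives a strictly $r$-logical section $S$ of the projection to $\freeTop[r]$, and the paper reads off that $P_0 S$ is strictly rank-preserving and isomorphic to $F$ via the iso component of $S$.

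Your $\freeTop[r]'$ is the proof-irrelevant shadow of this construction. $(\freeTop[r]')_i$ is exactly the set of $X$ over which the iso-comma has some rank-$\leq i$ object $(X,E,\varphi : FX \iso E)$. Your three closure checks are the unary analogue of verifying that the iso-comma is an $r$-topos.

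Your version buys the following:
\begin{enumerate}
\item It needs no auxiliary construction.
\item It stays inside the small category $\freeTop[r]$, so strict initiality in $\strLog[r]$ applies with no size caveat; $\isocomma{F}{\cE}$, by contrast, is large whenever $\cE$ is.
\item It isolates a reusable induction principle: the ranking of $\freeTop[r]$ is the least one closed under the $r$-topos structure.
\end{enumerate}

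The paper's version gives slightly more than the bare property. It produces a strictly rank-preserving functor $P_0 S$ together with a natural isomorphism $F \iso P_0 S$. That is, the witnesses $FX \iso E_X$, with $E_X$ in the appropriate rank of $\cE$, are chosen uniformly, compatibly with all the structure, and without any appeal to choice. This suits the paper's convention that existence conditions come with chosen witnesses. It also reuses the iso-comma machinery already set up for bi-initiality.
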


\begin{proof}
  This too follows formally from the iso-comma construction.
  Given $F : \freeTop[r] \to \cE$ logical, consider the iso-comma $r$-topos $\isocomma{F}{\cE}$.
  Initiality gives a section $S$ of the projection $P_1 : \isocomma{F}{\cE} \to \freeTop[r]$; the composite $P_0 S : \freeTop[r] \to \cE$ is then a strictly rank-preserving functor naturally isomorphic to $F$, so witnesses that $F$ preserves rank.
\[ \begin{tikzcd}[/tikz/baseline=(\tikzcdmatrixname-2-1.base)]
    & \isocomma{F}{\cE} \ar[d,"P_1"] \ar[r,"P_0"]
    & \cE \ar[d,"\id_\cE"] \ar[dl,phantom,"\iso"{sloped}]
    \\ \freeTop[r] \ar[r,"\id"] \ar[ur,"S",dashed] 
    & \freeTop[r] \ar[r,"F"] & \cE.
\end{tikzcd} \qedhere \]
\end{proof}

\subsection{Comparison with ordinary toposes}

Our next goal is showing that the free $\infty$-ranked topos $\freeTop[\infty]$ is equivalent to the free ordinary topos $\freeTop$.
The guiding idea is that the categorical constructors are the same: the ranks just come along for the ride.
The ranked version will contain \enquote{doppelgangers} of logical constructions taken at different ranks; but since all the assumed operations are determined by universal properties, such doppelgangers will be canonically isomorphic, so will not change the result, up to equivalence.

\begin{definition}
  Given a topos $\cE$, we can view $\cE$ as an $\infty$-ranked topos $\cE^\full$ (the \defemph{full ranking} of $\cE$), by setting $\cE^\full_i \defeq \ob \cE$ for all $i$.
  This forms the object part of an evident (strict) $2$-functor $\LOG \to \LOG[\infty]$.
\end{definition}

\begin{definition}
  Given an $\infty$-ranked topos $\cE$, its \defemph{ranked core} is the category $\toposcore{\cE}$ with objects $\ob \toposcore{\cE} \defeq \coprod_{i \in \N} \cE_i$, and maps induced by the evident projection $\ob \toposcore{\cE} \to \ob \cE$ to give a full and faithful functor $\toposcore{\cE} \to \cE$.

  It is clear that $\toposcore{\cE}$ is a topos, with a natural $\infty$-ranking given by $(\toposcore{\cE})_i \defeq \coprod_{j \leq i} \cE_j$, making $\toposcore{\cE} \to \cE$ $r$-logical.
  \comment{(In fact \emph{strictly} $r$-logical, but we never need that.)}
  Moreover, this forms the object part of an evident (strict) $2$-functor $\LOG[\infty] \to \LOG$.
\end{definition}

\begin{proposition} \label{prop:inftopos-is-topos}
  For any $\infty$-ranked topos $\cE$, if $\toposcore{\cE} \to \cE$ is essentially surjective, then $\cE$ is a topos. \qed
\end{proposition}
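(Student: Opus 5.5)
The plan is to check directly the clauses of the definition of topos (\cref{def:basiccategories}): regular structure and an NNO, which $\cE$ already has as an $\infty$-ranked topos, together with power-objects for \emph{every} object. So the only real content is producing power-objects for arbitrary $X \in \cE$.

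First step: by essential surjectivity of $\toposcore{\cE} \to \cE$, choose for $X$ some $i \in \N$ and some $Y \in \cE_i$ with an isomorphism $\theta : X \iso Y$. (Recall that an object of $\toposcore{\cE}$ is a pair $(i, Y)$ with $Y \in \cE_i$.) Since $r = \infty$, the condition $i < r - 1$ holds for every finite $i$. Hence the definition of $\infty$-ranked topos supplies a power-object $\pow Y \in \cE_{i+1}$ in $\cE$, with membership relation ${\in_Y} \injto Y \times \pow Y$.

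Second step: transport along $\theta$. The power-object universal property is invariant under isomorphism: take $\pow X \defeq \pow Y$, and let ${\in_X}$ be the pullback of ${\in_Y}$ along $\theta \times \id$. A relation $R \injto X \times Z$ then corresponds to $(\theta \times \id)$-image relations in $Y \times Z$, and these correspond to maps $Z \to \pow Y$. Uniqueness transfers for the same reason. By \cref{convention:chosen-structure} we must also choose this structure; but the choices of $i$, $Y$ and $\theta$ are exactly the chosen witnesses of essential surjectivity, so everything is determined.

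I expect no real obstacle. The only point needing care is the bookkeeping of chosen structure, together with noting that ``power-object in $\cE$'' in the definition of ranked topos is meant with respect to all monos $R \injto Y \times Z$ in $\cE$, not just those in some rank. Since the definition says this explicitly, the transported structure satisfies the universal property in $\cE$ itself, and $\cE$ is a topos.
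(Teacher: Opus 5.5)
Your proposal is correct and is essentially the argument the paper intends; the paper states the proposition without proof, as evident. Every object is isomorphic to one of some finite rank $i$, which has a power-object in $\cE$ by the ranking axioms, and power-objects transport along isomorphisms. An equivalent packaging is to note that $\toposcore{\cE} \to \cE$ is fully faithful, so essential surjectivity makes it an equivalence along which the topos structure of $\toposcore{\cE}$ transfers; this has the same content.
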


\begin{proposition} \label{prop:free-inftopos-is-topos}
  The free $\infty$-ranked topos $\freeTop[\infty]$ is a topos.
\end{proposition}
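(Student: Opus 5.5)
By \cref{prop:inftopos-is-topos}, it suffices to show that the ranked-core inclusion $J : \toposcore{(\freeTop[\infty])} \to \freeTop[\infty]$ is essentially surjective. The plan is to produce a splitting of $J$ up to isomorphism using initiality of $\freeTop[\infty]$.

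First, recall the structure of $\toposcore{(\freeTop[\infty])}$. Its objects are pairs $(i,X)$ with $X \in (\freeTop[\infty])_i$, and its maps are those of $\freeTop[\infty]$. It carries the $\infty$-ranking $(\toposcore{\cE})_i = \coprod_{j \le i} \cE_j$, and $J$ is $\infty$-logical. Indeed, the topos structure on the core is inherited from that of the ranks: power-objects of rank-$i$ objects land in rank $i+1$, and the regular structure and NNO come from the regular subcategories $\cE_i$. So $\toposcore{(\freeTop[\infty])}$ is an object of $\strLog[\infty]$, after fixing a choice of chosen structure; if the chosen structure is not literally strictly preserved, we work in $\LOG[\infty]$ instead.

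Second, apply initiality. Bi-initiality of $\freeTop[\infty]$ in $\LOG[\infty]$ (\cref{prop:free-r-topos-is-bi-initial}) gives an $\infty$-logical functor $S : \freeTop[\infty] \to \toposcore{(\freeTop[\infty])}$. The composite $J S : \freeTop[\infty] \to \freeTop[\infty]$ is then $\infty$-logical. By bi-initiality again, the hom-category $\LOG[\infty](\freeTop[\infty],\freeTop[\infty])$ is contractible, so $J S \iso \id$. Hence every object $X$ of $\freeTop[\infty]$ satisfies $X \iso J(SX)$, so $J$ is essentially surjective, and \cref{prop:inftopos-is-topos} concludes.

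The main obstacle is the first step: checking that $\toposcore{(\freeTop[\infty])}$ really is an $\infty$-ranked topos and that $J$ is $\infty$-logical. The paper asserts both as clear, but the details need care. Finite limits of objects of mixed ranks $(i,X)$, $(j,Y)$ must be taken in rank $\max(i,j)$, using that the inclusions $\cE_i \subseteq \cE_j$ are regular. Power-objects of $(i,X)$ are taken as $(i+1, \pow X)$. Rank preservation is then immediate from the definition of the core's ranking. If strictness turns out to be a problem, the $2$-categorical bi-initiality suffices, since it is exactly what the argument above uses.
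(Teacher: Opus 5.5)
Your proposal is correct and follows the paper's own proof: the paper also reduces to \cref{prop:inftopos-is-topos} and gets essential surjectivity of $\toposcore{(\freeTop[\infty])} \to \freeTop[\infty]$ from a section supplied by initiality. Your use of bi-initiality in $\LOG[\infty]$ to obtain $JS \iso \id$, rather than a strict section, is the safer reading, since it avoids having to check that the core inclusion strictly preserves the chosen structure.
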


\begin{proof}
  By \cref{prop:inftopos-is-topos}, as initiality gives a section of $\toposcore{(\freeTop[\infty])} \to \freeTop[\infty]$.
\end{proof}

\begin{proposition} \label{prop:free-ranked-topos-equiv-free-topos}
  The free $\infty$-ranked topos is equivalent to the free topos.
\end{proposition}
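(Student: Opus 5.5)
The plan is to show that $\freeTop[\infty]$, regarded as an ordinary topos via \cref{prop:free-inftopos-is-topos}, is bi-initial in $\LOG$; since $\freeTop$ is bi-initial there by \cref{prop:free-topos-is-bi-initial}, the two are then equivalent. Concretely, for each topos $\cE$ we must show that the hom-category $\LOG(\freeTop[\infty],\cE)$ is contractible. We do this by comparing it with $\LOG[\infty](\freeTop[\infty],\cE^\full)$, which is contractible by \cref{prop:free-r-topos-is-bi-initial}.

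First we check that these two hom-categories coincide. The regular structure, NNO, and power-objects of $\freeTop[\infty]$ as an ordinary topos are characterised by universal properties. So "preserving the topos structure" and "preserving the regular structure, NNO and the assumed power-objects of the $\infty$-ranking" are the same condition on a functor: every object of $\freeTop[\infty]$ is isomorphic to one of finite rank, by the section of $\toposcore{(\freeTop[\infty])}\to\freeTop[\infty]$, so its power-object is isomorphic to an assumed one. Rank preservation into $\cE^\full$ is automatic, since every object of $\cE$ has every rank. In both $2$-categories the $2$-cells are natural isomorphisms. Hence $\LOG(\freeTop[\infty],\cE) = \LOG[\infty](\freeTop[\infty],\cE^\full)$, and the latter is contractible.

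Alternatively, one can build mutually inverse logical functors directly. One direction is the essentially unique logical functor $\freeTop \to \freeTop[\infty]$. The other is obtained from initiality of $\freeTop[\infty]$ in $\strLog[\infty]$ applied to $(\freeTop)^\full$. Both composites are endofunctors of bi-initial objects, so they are isomorphic to the identities. This version needs, for the composite on $\freeTop[\infty]$, that its $\infty$-logical endofunctors are isomorphic to the identity, which is again \cref{prop:free-r-topos-is-bi-initial}. It also needs the composite to be rank-preserving, which holds weakly by \cref{prop:logical-functor-preserves-rank}.

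The main obstacle is the identification of structure in the first step. We must make sure that an ordinary logical functor out of $\freeTop[\infty]$ preserves the chosen power-objects of the ranking and not merely some power-objects. This is immediate from the universal property, since power-objects are unique up to unique isomorphism and weak preservation is all that is required. The remaining care is bookkeeping: checking that the essential-surjectivity witness from \cref{prop:inftopos-is-topos} indeed gives every object a finite rank up to isomorphism.
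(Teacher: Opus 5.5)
Your argument is correct and is essentially the paper's: both exhibit $\freeTop$ and $\freeTop[\infty]$ (a topos by \cref{prop:free-inftopos-is-topos}) as bi-initial in a common $2$-category, by identifying the relevant hom-categories out of $\freeTop[\infty]$ with those of $\LOG[\infty]$. The only difference is the choice of that $2$-category. The paper uses toposes carrying an $\infty$-ranking, with logical but not necessarily rank-preserving functors, so it needs \cref{prop:logical-functor-preserves-rank} to get rank preservation. Your use of plain $\LOG$, with full rankings on the targets, makes rank preservation trivial; your alternative route is the one that, like the paper, relies on \cref{prop:logical-functor-preserves-rank}.
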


\begin{proof}
  Consider the $2$-category of toposes equipped with an $\infty$-ranking; logical (not necessarily rank-preserving) functors; and natural isomorphisms.
  $\freeTop$, with its full ranking, is certainly bi-initial there;
  but so is $\freeTop[\infty]$, since it is a topos (\cref{prop:free-inftopos-is-topos}) and logical functors out of it are automatically rank-preserving (\cref{prop:logical-functor-preserves-rank}).
  The proposition follows.
\end{proof}

\subsection{Gluing}

The standard results on Artin/Freyd gluing of toposes given in \cref{sec:cat-thy-prelims,sec:free-topos-prelims} adapt directly to ranked toposes.

\begin{proposition}[Artin gluing] \label{prop:artin-gluing-r-topos}
  Let $F : \cC \to \cD$ be a cartesian, rank-preserving functor between $r$-toposes. 
  Then equipping the comma category $\comma{\cD}{F}$ with the \defemph{levelwise} ranking (an object $p : D \to FC$ has rank $\leq i$ just if $C$, $D$ both do), it carries an $r$-topos structure making the second projection functor $P_0 : \comma{\cD}{F} \to \cC$ strictly $r$-logical, the first $P_1 : \comma{\cD}{F} \to \cD$ strictly regular, and both strictly rank-preserving.

  Moreover, this is functorial as in \cref{prop:artin-gluing-topos}: pseudo-commuting squares $\varphi : JF' \iso FI$ with $I$, $J$ $r$-logical induce $r$-logical functors $\comma{J}{\varphi} : \comma{\cD'}{F'} \to \comma{\cD}{F}$, respecting identities and composition.
\end{proposition}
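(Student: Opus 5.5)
The plan is to transplant the construction of \cref{prop:artin-gluing-topos}, checking at each stage that the levelwise ranking behaves. First, the regular structure and the NNO. By \cref{prop:commas}\zcref[noname]{item:comma-regular}, applied with $F_1 = \id_\cD$ (regular) and $F_0 = F$ (cartesian), $\comma{\cD}{F}$ is regular, with componentwise structure and strictly regular projections. The NNO is $(\NNO_\cD \to F\NNO_\cC)$, the map being induced by $F0$ and $FS$, exactly as before. For each $i$, the rank-$\leq i$ objects form the full subcategory $\comma{\cD_i}{F|_{\cC_i}}$. Since $\cC_i$ and $\cD_i$ are regular subcategories of $\cC$ and $\cD$ containing the NNOs, and limits and images in the comma category are computed componentwise, this subcategory is closed under the regular structure up to isomorphism and contains the NNO. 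Strictness of $P_0$ and $P_1$ on ranks is immediate from the definition of the levelwise ranking.

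Next, the power-objects. Take $p : D \to FC$ of rank $\leq i$ with $i < r-1$, so that $\pow C$ exists in $\cC_{i+1}$ and $\pow D$ exists in $\cD_{i+1}$. I define $\pow(p)$ by the same pullback as in \cref{prop:artin-gluing-topos}, namely $\HOLinterp[ a \of FPC,\, b \of PD \suchthat b \subseteq \{ y \suchthat p(y) \functorin{F} a\}]$, projecting to $F\pow C$. The comparison $\xi_C : F\pow C \to \pow FC$ only needs $F$ to be cartesian and the membership relation $F({\in_C}) \injto FC \times F\pow C$. However, $\pow FC$ need not exist in $\cD$. My first attempt is to avoid it altogether: interpret ``$p(y) \functorin{F} a$'' directly as the pullback of $F({\in_C})$ along $p \times \id$, a subobject of $D \times F\pow C$. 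The object is then the subobject of $F\pow C \times \pow D$ classifying those pairs $(a,b)$ for which $b \cap D$ is contained in that relation. This is definable using only $\pow D$, finite limits, images and universal quantification along $D$; but universal quantification is not available in a merely regular category, so this is the point that needs care.

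This is the step I expect to be the main obstacle. To get around it I would use the power object $\pow(D \times F\pow C)$, which seems wrong because it need not exist; instead I would use $\pow D$ together with the exponential-style construction available in the slice. Concretely, the set of $(a,b)$ with $b \subseteq R_a$, where $R \injto D \times F\pow C$ is the relation above, can be obtained as an equalizer of two maps $F\pow C \times \pow D \rightrightarrows \pow D$: one is $\pi_2$, the other is $(a,b) \mapsto b \cap R_a$. The intersection $b \cap R_a$ is well defined because $\pow D$ classifies relations into $D$: the relation $\{(a,b,y) \suchthat y \in b,\ (y,a) \in R\}$ from $F\pow C \times \pow D$ to $D$ is a subobject of $(F\pow C \times \pow D) \times D$, and so it has a classifying map into $\pow D$. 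This uses only finite limits and $\pow D$. The resulting object lies in $\cD_{i+1}$, up to isomorphism, because $F\pow C$ is in $\cD_{i+1}$ (as $F$ is rank-preserving) and $\cD_{i+1}$ is closed under finite limits. Hence $\pow(p)$ has rank $\leq i+1$. The universal property is then verified exactly as in the unranked case: a relation $R' \injto (D' \to FC') \times (D \to FC)$ in the comma category gives $\chi : C' \to \pow C$ by the power-object property of $C$, and $D' \to \pow D$ from the $D$-component, and the compatibility condition is exactly the inclusion defining the equalizer. By construction $P_0$ sends $\pow(p)$ to $\pow C$ on the nose, so $P_0$ is strictly $r$-logical.

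Finally, functoriality. Given $\varphi : JF' \iso FI$, define $\comma{J}{\varphi}$ to send $(p' : D' \to F'C')$ to $\varphi_{C'} \circ Jp' : JD' \to FIC'$. This preserves rank because $I$ and $J$ do, and it preserves the componentwise regular structure and the NNO because $I$ and $J$ are logical. It preserves power-objects because the equalizer construction above is built from finite limits, $\pow D$, $F\pow C$ and $F({\in})$, all of which $J$ and $I$ preserve, with the comparison maps supplied by $\varphi$. The coherence checks for identities and composition are the same routine verification as in \cref{prop:artin-gluing-topos}.
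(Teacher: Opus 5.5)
Your proposal is correct and is essentially the paper's proof. It uses componentwise regular structure and NNO from \cref{prop:commas}, the same power object (the subobject of $F\pow C \times \pow D$ of pairs $(a,b)$ with $b$ contained in the pullback of $a$ along $p$) as in \cref{prop:artin-gluing-topos}, bounds its rank via rank-preservation of $F$, and treats functoriality as routine.

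The one slip is your claim that $\pow FC$ need not exist. Since $F$ preserves rank, $FC$ is isomorphic to an object of $\cD_i$, so it has a power object in $\cD_{i+1}$; this is exactly what the paper uses to form its pullback through $\xi_C$. Your equalizer built from $\pow D$ alone presents the same object, so the detour is unnecessary but harmless.
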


\begin{proof}
  The regular structure (total and rank-wise) is furnished by \cref{prop:commas}\zcref[noname]{item:comma-regular};
  similarly, the NNO and power-objects are constructed just as in the topos case (\cref{prop:artin-gluing-topos}), noting that as $F$ preserves rank, the pullback there exists and has rank $i+1$ if the objects $C$, $D$ have rank $i$.
  Functoriality once again is lengthy but routine.
\end{proof}

\begin{longcomment}
\begin{remark}
  This is one of the places where the crude definition of $r$-toposes (rank just for objects, not maps) weakens a result unnecessarily: with a good map-based definition of rank, I think this shouldn’t require $F$ to be rank-preserving.  Probably should consolidate remarks about this point somewhere.
\end{remark}
\end{longcomment}

\begin{proposition}[Freyd gluing] \label{prop:freyd-gluing-r-topos}
  The terminal object of ${\freeTop[r]}$ is projective.
\end{proposition}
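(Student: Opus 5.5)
The plan is to replay the proof of \cref{prop:freyd-gluing-topos} verbatim in the ranked setting, using \cref{prop:artin-gluing-r-topos} in place of \cref{prop:artin-gluing-topos}. The first task is to choose the target we glue along. I will take $\Set$ with its full ranking $\Set^\full$, so every set has every rank. This is an $\infty$-ranked topos, and hence an $r$-topos by forgetting ranks above $r$. We glue along the global sections functor $\Gamma = \freeTop[r](1,-) : \freeTop[r] \to \Set$. It is cartesian, since it is representable, and it is trivially rank-preserving because every object of $\Set^\full$ has every rank. The choice of the full ranking on $\Set$ is made precisely so that this hypothesis of \cref{prop:artin-gluing-r-topos} holds with no work.

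With this in place, \cref{prop:artin-gluing-r-topos} makes $\comma{\Set}{\Gamma}$, with the levelwise ranking, into an $r$-topos. In it, $P_0 : \comma{\Set}{\Gamma} \to \freeTop[r]$ is strictly $r$-logical and $P_1$ is strictly regular. Strict initiality of $\freeTop[r]$ in $\strLog[r]$ then gives a strictly $r$-logical interpretation $\HOLinterp : \freeTop[r] \to \comma{\Set}{\Gamma}$. Since $P_0\HOLinterp$ is a strictly $r$-logical endofunctor of the initial object, $P_0 \HOLinterp = \id$. As before, write $\HOLinterp_1 \defeq P_1\HOLinterp$ and $\rho : \HOLinterp_1 \Rightarrow \Gamma$ for the induced transformation.

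Now let $e : A \coverto 1$ be a cover in $\freeTop[r]$. Because $\HOLinterp$ is regular, $\HOLinterp[e]$ is a cover in $\comma{\Set}{\Gamma}$. Since $P_1$ is (strictly) regular, $\HOLinterp[e]_1 : \HOLinterp[A]_1 \to \HOLinterp[1]_1 \iso 1$ is a cover in $\Set$, so $\HOLinterp[A]_1$ is inhabited. Picking $x \in \HOLinterp[A]_1$, the element $\rho_A(x) \in \Gamma(A)$ is a global section of $A$, and it splits $e$. By \cref{prop:projective-equivalent-split} the terminal object is then projective; in fact any map $1 \to X$ lifts along a cover by pulling back to $1$.

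I expect no step to be a real obstacle, since everything is formal given \cref{prop:artin-gluing-r-topos}. The only points needing care are bookkeeping. First, the rank-preservation hypothesis on $\Gamma$ must be verified, and this is why $\Set$ carries the full ranking. Second, we should confirm that $P_1$ preserves covers, which \cref{prop:artin-gluing-r-topos} supplies since $P_1$ is strictly regular. Third, we should note that the terminal object of the glued category is $(1 \to \Gamma 1)$ with first component a singleton, so that its $P_1$-image is $1$.
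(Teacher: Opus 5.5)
Your proposal is correct and is essentially the paper's own argument. The paper also replays the proof of \cref{prop:freyd-gluing-topos} almost word for word, giving $\Set$ the full ranking so that the global sections functor $\Gamma : \freeTop[r] \to \Set$ is rank-preserving and \cref{prop:artin-gluing-r-topos} applies.
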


\begin{proof}
  Almost word-for-word as in \cref{prop:freyd-gluing-topos}, endowing $\Set$ with the full ranking in order to make the global sections functor rank-preserving and apply \cref{prop:artin-gluing-r-topos}.
\end{proof}

\section{Indexed and internal \texorpdfstring{$r$}{r}-toposes} \label{sec:indexed-ranked-toposes}

\subsection{Background: Indexed category theory}

The endgame of the main result makes essential use of \emph{internal} $r$-toposes in the free topos.
To cleanly pass between external and internal categories, we view them both in the more general setting of \emph{indexed} categories.
A good introduction to the theory of indexed categories can be found in \cite[B1.2--4, B2.3]{johnstone:elephant};
for now, we briefly recall just the material we need:

\begin{definition}
  Given a category $\cE$ (the \defemph{base}), an \defemph{$\cE$-indexed category} $\indC$ is a pseudo-functor $\indC :\cE^\op \to \CAT$; that is,
  for each $X \in \cE$ a category $\indC_X$ (the \defemph{fibre} category over $X$), and for each $f : Y \to X$ a \defemph{reindexing} functor $f^* : \indC_X \to \indC_Y$, all functorial in $\cE$ up to coherent isomorphism.
  
  An $\cE$-indexed functor $F : \indC \to \indD$ consists of functors $F_X : \indC_X \to \indD_X$, commuting with reindexing up to coherent isomorphism.
  With a suitable notion of natural transformations, $\cE$-indexed categories form a $2$-category which we denote $\CAT[\cE]$.
\end{definition}

\begin{remark}
  Indexed categories can alternatively be presented as \defemph{fibred categories}, essentially a reorganisation of the same data, whose theory is in some respects technically cleaner.
  Everything we do here with indexed categories can equivalently be read in fibred terms. 
  A useful comparison of the two approaches is given in \cite[Discussion~1.10.4]{jacobs:categorical-logic}.
  \comment{Previously we also recommended \cite{streicher:fibered-cats} as a general intro; but feels like that Jacobs discussion is really more relevant here, and it leads the reader to Jacobs Ch.~1, which is itself a good intro to fibred cats.}
\end{remark}

\begin{examples} \leavevmode
  \begin{enumerate}
  \item For any $\cE$, $\cC$, the \defemph{constant $\cE$-indexing} of $\cC$ has all fibres equal to $\cC$, and trivial reindexing.
  \item For any $\cC$, the \emph{standard $\Set$-indexing} of $\cC$ has fibres $\cC^X$ for $X \in \Set$, with reindexing given by precomposition.
  \item For cartesian $\cE$, the \defemph{self-indexing} of $\cE$ has fibres $\cE/X$, with reindexing given by pullback.
  When working in indexed categories over a base $\cE$, we will generally identify $\cE$ with its self-indexing.
  \item Given $\indC$ indexed over $\cE$, and $X \in \cE$, the \defemph{restriction} of $\indC$ to $\cE/X$ is the $\cE/X$-indexed category $\restrictindexed{\indC}{X}$ given by $(\restrictindexed{\indC}{X})_{(f : Y \to X)} \defeq \indC_Y$, with evident reindexing.
  The restriction of the self-indexing of $\cE$ to $\cE/X$ is precisely the self-indexing of the slice $\cE/X$.
  \end{enumerate}
\end{examples}

\begin{definition}
  An indexed category $\indC$ over a base $\cE$ is regular (resp.~a topos, etc.)\ if all its fibre categories $\indC_X$ are regular (resp.~toposes), and for each $f : Y \to X$ in $\cE$ the reindexing functor $f^*:\indC_X \to \indC_Y$  is regular (resp.~logical).
  An indexed functor $F : \indC \to \indD$ is regular (logical, etc.)\ just if its fibre functors $F_X$ all are.

  \comment{Currently never used, reinstate if wanted: We write $\REG[\cE]$, $\LOG[][\cE]$, and so on for the resulting $2$-categories of indexed categories over $\cE$.}
\end{definition}

For the next few items, fix an indexed category $\indC$ over a base $\cE$.

\begin{definition}[{\cite[B1.3.12]{johnstone:elephant}, \cite[9.5.1--4]{jacobs:categorical-logic}}] \label{def:locally-small}
  $\indC$ is called \defemph{locally $\cE$-small} if for all objects $A, B \in \indC_X$, the presheaf $(\cE/X)^\op \to \Set$ sending $f : Y \to X$ to $\indC_Y(f^*A,f^*B)$ is representable.
  That is, there is an object $\chi : \indhom[\cE]{\indC}{X}{A}{B}\to X$ in $\cE/X$ (their \defemph{hom-object}), with for each $f : Y \to X$ a bijection $\indC_Y(f^* A,f^*B) \iso \cE/X(Y,\indhom[\cE]{\indC}{X}{A}{B})$, naturally in $f$.
  
  The \defemph{universal map from $A$ to $B$} is the map $u : \chi^*A \to \chi^*B$ in $\indC_{\indhom[\cE]{\indC}{X}{A}{B}}$ corresponding to $\id_{{\indhom[\cE]{\indC}{X}{A}{B}}}$. 
\end{definition}

\begin{proposition} \label{prop:indexed-hom-functor} \leavevmode
  \begin{enumerate}
  \item When $\indC$ is locally small, its hom-objects assemble automatically into an indexed functor $\indC^\op \times \indC \to \cE$.
    In particular, for each $A \in \indC_X$, there is an $\cE/X$-indexed functor $\indyon{A} : \restrictindexed{\indC}{X} \to \cE/X$, sending each $f : Y \to X$, $B \in \indC_Y$ to $\indhom[\cE]{\indC}{Y}{f^*A}{B}$.

  \item If $F : \indC \to \indD$ is an indexed functor, with $\indC$, $\indD$ locally small, then $F$ acts on hom-objects, inducing maps $F_{A,B} : \indhom[\cE]{\indC}{X}{A}{B} \to \indhom[\cE]{\indD}{X}{FA}{FB}$, suitably naturally in $X$, $A$, $B$.
  \end{enumerate}
\end{proposition}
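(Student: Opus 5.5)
For part (1), the plan is a Yoneda argument carried out fibrewise and then made coherent. Fix $X \in \cE$ and $A, B \in \indC_X$. Local smallness gives a representing object $\indhom[\cE]{\indC}{X}{A}{B} \to X$ for the presheaf $f \mapsto \indC_Y(f^*A, f^*B)$ on $\cE/X$. A map $a : A' \to A$ in $\indC_X$ acts by precomposition, and a map $b : B \to B'$ acts by postcomposition. Both actions are natural in $f$, because reindexing is functorial up to the coherent isomorphisms $f^*(b \circ u \circ a) \iso f^*b \circ f^*u \circ f^*a$. So each gives a natural transformation between the represented presheaves, and by Yoneda in $\cE/X$ a unique map $\indhom[\cE]{\indC}{X}{A}{B} \to \indhom[\cE]{\indC}{X}{A'}{B'}$ over $X$. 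Uniqueness of Yoneda-induced maps gives functoriality in $(A,B)$, so the hom-objects form a functor $\indC_X^\op \times \indC_X \to \cE/X$.

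Next comes the indexing. For $g : Z \to X$ one needs a canonical isomorphism $g^*\indhom[\cE]{\indC}{X}{A}{B} \iso \indhom[\cE]{\indC}{Z}{g^*A}{g^*B}$ in $\cE/Z$. The plan is to compare represented presheaves on $\cE/Z$. A map $h : W \to Z$ into $g^*\indhom[\cE]{\indC}{X}{A}{B}$ over $Z$ is the same as a map $W \to \indhom[\cE]{\indC}{X}{A}{B}$ over $X$ via $gh$. That corresponds to $\indC_W((gh)^*A, (gh)^*B)$, which is identified with $\indC_W(h^*g^*A, h^*g^*B)$ through the pseudofunctor isomorphisms $(gh)^* \iso h^*g^*$. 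Yoneda then yields the comparison isomorphism. The coherence conditions for an indexed functor, namely compatibility with identities and with composites $Z' \to Z \to X$, reduce to the pseudofunctor coherence axioms for $\indC$ together with uniqueness in Yoneda. The step I expect to be most fiddly is exactly this bookkeeping: two composites of Yoneda-induced isomorphisms must agree, and this holds because both induce the same natural transformation of presheaves. So everything is forced, even though it is notationally heavy.

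The special case $\indyon{A}$ is obtained by applying the construction to the restriction $\restrictindexed{\indC}{X}$ over $\cE/X$ and fixing the first argument. For $f : Y \to X$ and $B \in \indC_Y$, one takes $\indhom[\cE]{\indC}{Y}{f^*A}{B}$. Its reindexing along $g : Z \to Y$ is handled by the isomorphism above, composed with $g^*f^*A \iso (fg)^*A$.

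For part (2), given an indexed functor $F$, the plan is to define $F_{A,B}$ by Yoneda once more. For $f : Y \to X$, send $u \in \indC_Y(f^*A, f^*B)$ to the composite $f^*FA \iso F f^*A \to[F u] F f^*B \iso f^*FB$, using the structure isomorphisms of $F$. Naturality in $f$ follows from the coherence of $F$ with reindexing, and naturality in $A$ and $B$ from the functoriality of $F_Y$. Yoneda then produces $F_{A,B}$. Its compatibility with the reindexing isomorphisms of part (1) is again checked at the level of presheaves, where it reduces to the coherence axioms for $F$.
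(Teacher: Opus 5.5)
Your proposal is correct and is essentially the paper's proof, which is exactly the Yoneda reduction to giving natural transformations between the represented presheaves and then unwinding the definitions; you carry out that unwinding in more detail. One small wording fix: each $f^*$ is an honest functor, so $f^*(b \circ u \circ a) = f^*b \circ f^*u \circ f^*a$ holds on the nose, and what naturality in $f$ of the pre-/postcomposition action actually uses is naturality of the pseudofunctor comparison isomorphisms $g^*f^* \iso (fg)^*$.
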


\begin{proof}
  By the Yoneda lemma, it suffices to give natural transformations between the presheaves that the objects $\indhom[\cE]{\indC}{X}{A}{B}$ were defined to represent.
  But this is direct in each case by unwinding the definitions. 
\end{proof}

\begin{definition}
  A map $e \in \indC_X$ is an \defemph{indexed cover} if for every $f : Y \to X$, the reindexing $f^*e$ is a cover in $\indC_Y$.
\end{definition}

\begin{proposition}
  If $\indC$ is (indexed-)regular, then a map $e \in \indC_X$ is an indexed cover in $\indC$ just if it is a cover in $\indC_X$.
\end{proposition}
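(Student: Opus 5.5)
The statement has two directions, and only one of them needs any argument. If $e$ is an indexed cover, then taking $f = \id_X$ shows that $\id_X^* e$ is a cover in $\indC_X$. Since $\id_X^* \iso \id$ coherently, and covers are invariant under isomorphism of arrows, $e$ itself is a cover in $\indC_X$.

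For the converse, suppose $e : A \coverto B$ is a cover in $\indC_X$, and let $f : Y \to X$ be arbitrary. By the definition of an indexed regular category, the fibre $\indC_X$ is regular and the reindexing functor $f^* : \indC_X \to \indC_Y$ is a regular functor. So the plan is to check that regular functors preserve covers. Then $f^*e$ is a cover in $\indC_Y$ for every $f$, which is exactly the definition of an indexed cover.

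The only step with any content is this preservation of covers. Here "regular functor" means preserving finite limits and image factorisations, or at least preserving covers directly, depending on how the definition is unpacked. I would argue it from the image factorisation.

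\begin{itemize}
\item In the regular category $\indC_X$, the cover $e$ has image factorisation $e = m \circ e'$, with $e'$ a cover and $m$ a mono. Since $e$ is a cover, it factors through no proper subobject, so $m$ is an isomorphism.
\item The functor $f^*$ preserves image factorisations. Hence $f^*e = f^*m \circ f^*e'$ is an image factorisation in $\indC_Y$, with $f^*e'$ a cover.
\item As $f^*m$ is an isomorphism, $f^*e$ is a cover.
\end{itemize}

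Equivalently, one can use that covers in a regular category are the regular epimorphisms, i.e.\ coequalisers of their kernel pairs. A regular functor preserves kernel pairs because it is cartesian. It also preserves regular epis, since these are exactly the cover parts of image factorisations.

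The main obstacle is only a matter of being careful with definitions. One must make sure that "regular" for reindexing functors is read in the weak sense, which is still enough to preserve covers up to isomorphism. One must also check that covers are stable under composition with isomorphisms, which is immediate from \cref{def:covers}.
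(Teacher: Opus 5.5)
Your proposal is correct and follows the paper's argument: indexed-regularity makes each reindexing functor $f^*$ regular, hence cover-preserving, and the other direction is the case $f = \id_X$, which the paper leaves implicit. Deriving cover-preservation from preservation of the chosen image factorisations is a fine way to spell out what the paper treats as immediate.
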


\begin{proof}
  Indexed-regularity says reindexing is regular, so preserves covers.
\end{proof}

For the next few items, assume additionally that $\cE$ is regular.

\begin{definition}
  An object $P \in \indC_X$ is \defemph{indexed (cover-)projective} if for each reindexing $f : Y \to X$, map $k : f^*P \to A$, and indexed cover $e : B \coverto A$ in $\indC_Y$, there is some cover $g : Z \coverto Y$ and a lift $\hat{k}$ of $g^*k$ along $g^*e$:
  \[ \begin{tikzcd}[row sep=small]  
 & B \ar[dd,cover,"e"] & & & & g^*B \ar[dd,cover,"g^*e"] \\
 & & { } \ar[r,squiggly,maps to] & { } \\
f^*P \ar[r,"k"] & A & & &g^*f^* P \ar[r,"g^*k"] \ar[uur,"\widehat{k}",dashed] & g^*A
\end{tikzcd} \]
\end{definition}

\comment{Literature search on indexed-projectivity.  Don’t remember seeing this definition, but it surely shouldn’t be new?}

\begin{examples} \leavevmode
  \begin{enumerate}
  \item In the standard $\Set$-indexing of an ordinary regular category $\cC$, an object $A \in \cC^X$ is indexed-projective just if each $A_x$ is projective in $\cC$.

  \item In the self-indexing of a topos $\cE$, an object $A \in \cE \equiv \cE/1=\cE_1$ is indexed-projective just if it is \emph{internally projective} in $\cE$, in the topos-theoretic sense of \cite[D4.5.1]{johnstone:elephant}.
  \end{enumerate}
\end{examples}

\begin{proof}
  The $\Set$-indexing example is straightforward.
  The correspondence with internal projectivity in toposes is essentially \cite[Thm~3.3]{nlab:internally-projective-object-r12}.
  \comment{Can we find this proof in print somewhere instead? Is it in the Shulman stack semantics paper?}
\end{proof}

\begin{proposition} \label{prop:indexed-proj-equivalent-forms}
  The following are equivalent, for $P \in \indC_X$:
  \begin{enumerate}[(1)]
  \item \label{item:ind-proj} $P$ is indexed-projective.
  
  \item \label{item:ind-proj-covers} (When $\indC$ is regular.) Any cover of any reindexing of $P$ splits on some cover of its base.
    That is, for each $f : Y\to X$ and cover $e : B \coverto f^*P$, there is some cover $g : Z \coverto Y$ such that $g^*e$ splits in $\indC_Z$. 
  
  \item \label{item:ind-proj-representable} (When $\indC$ is locally small.) The indexed representable functor $\indyon{P} : \restrictindexed{\indC}{X} \to \cE/X$ preserves covers.
  \qed
  \end{enumerate}
\end{proposition}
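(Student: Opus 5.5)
I will prove (1)$\Leftrightarrow$(2) and (1)$\Leftrightarrow$(3) separately. In each case the work is unwinding the definitions of indexed cover, reindexing and hom-object, using that covers in $\cE$ are stable because $\cE$ is regular.

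\emph{(1)$\Rightarrow$(2).} Given $f : Y \to X$ and a cover $e : B \coverto f^*P$ in $\indC_Y$, I take $A \defeq f^*P$ and $k \defeq \id$. Since $\indC$ is regular, $e$ is an indexed cover. Indexed projectivity then yields a cover $g : Z \coverto Y$ and a lift $\hat k : g^*f^*P \to g^*B$ with $g^*e \circ \hat k = \id$, which is a splitting of $g^*e$.

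\emph{(2)$\Rightarrow$(1).} Given $f$, $k : f^*P \to A$ and an indexed cover $e : B \coverto A$, I form the pullback $e' : k^*B \to f^*P$ of $e$ along $k$ in the regular fibre $\indC_Y$. This is a cover, since covers are pullback-stable there. By (2) there is a cover $g : Z \coverto Y$ such that $g^*e'$ splits. Reindexing is regular, so it preserves pullbacks. Composing the splitting with the projection $g^*(k^*B) \to g^*B$ gives the lift $\hat k$ of $g^*k$ along $g^*e$.

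\emph{(1)$\Leftrightarrow$(3).} I first need to state precisely what ``$\indyon{P}$ preserves covers'' means. Its components are $B \mapsto \indhom[\cE]{\indC}{Y}{f^*P}{B}$ over $Y$. The statement is that for every indexed cover $e : B \coverto A$ in $\indC_Y$, the induced map
\[ e_* : \indhom[\cE]{\indC}{Y}{f^*P}{B} \to \indhom[\cE]{\indC}{Y}{f^*P}{A} \]
is a cover in $\cE/Y$, equivalently in $\cE$. I will use the standard fact that, in a regular category, a map $m : H \to H'$ is a cover iff every generalized element $h : W \to H'$ lifts through $m$ after precomposition with some cover $W' \coverto W$. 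This holds because a cover pulled back along $h$ is still a cover, and for the converse one takes $h = \id$.

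By the defining representability of hom-objects (\cref{def:locally-small}), a generalized element $W \to \indhom[\cE]{\indC}{Y}{f^*P}{A}$ over $Y$, with structure map $w : W \to Y$, corresponds naturally in $W$ to a map $w^*f^*P \to w^*A$ in $\indC_W$. Lifting it through $e_*$ corresponds to lifting that map along $w^*e$. With this dictionary, indexed projectivity says exactly that every generalized element of the target lifts locally, after a cover of its domain. Here I use the composite $fw$ as the new reindexing and note that $w^*e$ is again an indexed cover. Hence (1) is equivalent to each $e_*$ being a cover, i.e.\ to (3).

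I expect the main obstacle to be the bookkeeping of the pseudo-functorial coherence isomorphisms $w^*f^* \iso (fw)^*$ when transporting maps through these correspondences. I will suppress these coherence isomorphisms, in line with the paper's conventions.
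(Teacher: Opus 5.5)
Your proof is correct and takes essentially the same route as the paper. For (1)$\Leftrightarrow$(2) you spell out the standard pullback and identity argument that the paper leaves as an easy adaptation of the ordinary case. Your generalized-element criterion for covers packages exactly the paper's two arguments for (3): pulling $e_*$ back along the element classifying $k$ gives (3)$\Rightarrow$(1), and applying projectivity to the universal map (your case $h=\id$) followed by right-cancellation gives (1)$\Rightarrow$(3).
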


\begin{proof}
  \zcref[noname]{item:ind-proj} $\Iff$ \zcref[noname]{item:ind-proj-covers} adapts easily from the ordinary case, \cref{prop:projective-equivalent-split}.

  \zcref[noname]{item:ind-proj-representable} $\Imp$ \zcref[noname]{item:ind-proj}:
  Suppose $\indyon{P}$ preserves covers.
  Then for any reindexing $f : Y \to X$, and cover $e : B \coverto A$ and map $k : f^* P \to A$ in $\indC_Y$, a cover of $Y$ on which $k$ lifts to $B$ is given by the following pullback:
  \[ \begin{tikzcd}
  \bullet \ar[r,dashed,"\widehat{k}"] \ar[d,dashed,cover] \ar[dr,drpb]
  & \indhom[\cE]{\indC}{Y}{f^*P}{B} \ar[d,cover,"(\indyon{P})_f(e)"{description,pos=0.45}]
  \\ Y \ar[r,"k"]
  & \indhom[\cE]{\indC}{Y}{f^*P}{A}
  \end{tikzcd} \]
  
  \zcref[noname]{item:ind-proj} $\Imp$ \zcref[noname]{item:ind-proj-representable}:
  Suppose $P$ is indexed-projective.
  Given $f : Y \to X$ and $e : B \coverto A$ in $\indC_Y$,
  consider the universal map from $f^*P$ to $A$ in $\indC_{\indhom[\cE]{\indC}{Y}{f^*P}{A}}$, denoted $u : \chi^*f^*P \to \chi^*A$ as in \cref{def:locally-small}.
  We know $u$ must lift to a map $\widehat{u} : g^*\chi^*f^*X \to g^*\chi^*B$ on some cover $g : Z \coverto \indhom[\cE]{\indC}{Y}{f^*P}{A}$;
  but that amounts exactly to a triangle of the following form
  \[ \begin{tikzcd}
  & \indhom[\cE]{\indC}{Y}{f^*P}{B} \ar[d,"(\indyon{P})_f(e)" description]
  \\ Z \ar[r,cover,"g",dashed] \ar[ur,"\widehat{u}",dashed]
  & \indhom[\cE]{\indC}{Y}{f^*P}{A},
  \end{tikzcd} \]
  implying by right-cancellation that $(\indyon{P})_f(e)$ is a cover as required.
  \comment{Is this clear enough without including the maps to $Y$ in the diagrams, now that the universal map is presented explicitly back in \cref{def:locally-small}?}
\end{proof}

\begin{definition}[{\cite[B2.3.1--2.3.3]{johnstone:elephant}, cf.\ \cite[\textsection 7.3]{jacobs:categorical-logic}}]
  An internal category $\smC$ in $\cE$ can be viewed as an $\cE$-indexed category $\extcat{\smC}$, its \defemph{externalisation}, with $\ob \extcat{\smC}_X \defeq \cE(X,\ob \smC)$, and so on.

  An $\cE$-indexed category is called \defemph{$\cE$-small} if it is equivalent to the $\cE$-externalis\-ation of some internal category.
\end{definition}

\begin{proposition}
  An $\cE$-small indexed category is locally $\cE$-small.
\end{proposition}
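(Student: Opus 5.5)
Since local smallness is invariant under equivalence of indexed categories (hom-presheaves transport along the fibrewise equivalences, compatibly with reindexing), it suffices to treat the externalisation $\extcat{\smC}$ of an internal category $\smC$ in $\cE$. Here $\cE$ is at least cartesian, as internal categories require. We write $d_0, d_1 : \mor \smC \to \ob \smC$ for the domain and codomain maps.

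Fix $X \in \cE$ and objects $A, B \in \extcat{\smC}_X$, that is, maps $A, B : X \to \ob \smC$. The candidate hom-object is the pullback
\[ \indhom[\cE]{\extcat{\smC}}{X}{A}{B} \defeq X \times_{\ob\smC \times \ob\smC} \mor \smC , \]
taken along $(A,B) : X \to \ob\smC \times \ob\smC$ and $(d_0,d_1) : \mor\smC \to \ob\smC\times\ob\smC$, with $\chi$ the first projection to $X$.

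Next we verify the representing property. For $f : Y \to X$, the reindexed objects are $f^*A = Af$ and $f^*B = Bf$. By definition of the externalisation, a map $Af \to Bf$ in $\extcat{\smC}_Y$ is a map $m : Y \to \mor\smC$ with $d_0 m = Af$ and $d_1 m = Bf$. By the universal property of the pullback, such $m$ correspond bijectively to maps $Y \to \indhom[\cE]{\extcat{\smC}}{X}{A}{B}$ over $X$, namely to $(f,m)$.

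It remains to check naturality in $f$. Reindexing along $g : Z \to Y$ sends $m$ to $mg$, which matches precomposition of $(f,m)$ with $g$. So the presheaf $f \mapsto \extcat{\smC}_Y(f^*A,f^*B)$ is represented as required.

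There is no real obstacle here. The only point needing care is the preliminary reduction along an equivalence: an indexed equivalence $\indC \equiv \extcat{\smC}$ gives fibrewise bijections of hom-sets that commute with reindexing up to the coherence isomorphisms. These bijections therefore form an isomorphism of the relevant presheaves on $\cE/X$, and so representability transfers from $\extcat{\smC}$ to $\indC$.
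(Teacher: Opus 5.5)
Your proof is correct and follows the paper's argument exactly. Like the paper, you reduce to the externalisation of an internal category and take the hom-object to be the pullback of $(d_0,d_1) : \mor\smC \to (\ob\smC)^2$ along $(A,B)$; you additionally spell out the representing bijection, its naturality, and the transfer of representability along an indexed equivalence, all of which the paper leaves implicit.
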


\begin{proof}
  It suffices to check for the externalisation of an internal category, which is direct:
  for $A,B \in \ob \extcat{\smC}_X = \cE(X,\ob \smC)$, their hom-object is the pullback of $\mor \smC \to (\ob \smC)^2$ along $(A,B) : X \to (\ob \smC)^2$.
\end{proof}

We will generally identify internal categories with their externalisations, justified by the following proposition: 

\begin{proposition}[{\cite[B2.3.3]{johnstone:elephant}, cf.\ \cite[7.3.8]{jacobs:categorical-logic}}] \label{prop:externalisation-2-fully-faithful}
  The externalisation of internal categories forms a $2$-functor $\intCat[\cE] \to \CAT[\cE]$, which is $2$-fully-faithful, in that the maps on hom-categories $\intCat[\cE](\smC,\smD) \to \CAT[\cE](\extcat{\smC},\extcat{\smD})$ are equivalences. \qed
\end{proposition}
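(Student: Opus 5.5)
The plan is to build the $2$-functor explicitly and then show that it is locally an equivalence, by giving an inverse on each hom-category.

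\emph{The $2$-functor.} An internal functor $F : \smC \to \smD$ has components $F_0 : \ob\smC \to \ob\smD$ and $F_1 : \mor\smC \to \mor\smD$. It induces fibrewise functors $\extcat{F}_X : \extcat{\smC}_X \to \extcat{\smD}_X$ by postcomposition, sending $a : X \to \ob\smC$ to $F_0 a$ and $m : X \to \mor\smC$ to $F_1 m$. Since reindexing in externalisations is given by precomposition, these functors commute with reindexing strictly. An internal natural transformation $\alpha : \ob\smC \to \mor\smD$ gives components $\alpha a : X \to \mor\smD$ at each $a \in \extcat{\smC}_X$. Both functoriality and naturality follow by postcomposing the internal commuting diagrams. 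This yields a strict $2$-functor $\intCat[\cE] \to \CAT[\cE]$.

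\emph{Inverse on hom-categories.} The key step is a Yoneda argument using the generic objects.
\begin{enumerate}
\item The object $\id_{\ob\smC}$ is a generic object of $\extcat{\smC}_{\ob\smC}$: every $a \in \extcat{\smC}_X$ is the reindexing $a^*(\id_{\ob\smC})$.
\item Similarly, the pair $(\mathrm{dom},\mathrm{cod})$ together with $\id_{\mor\smC}$ gives a generic morphism in $\extcat{\smC}_{\mor\smC}$.
\item Given an indexed functor $G : \extcat{\smC} \to \extcat{\smD}$, set $F_0 \defeq G_{\ob\smC}(\id_{\ob\smC}) : \ob\smC \to \ob\smD$.
\item To define $F_1$, apply $G_{\mor\smC}$ to the generic morphism. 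Its domain and codomain are only isomorphic to $\mathrm{dom}^*$ and $\mathrm{cod}^*$ applied to $F_0$, via the pseudo-naturality isomorphisms of $G$. Composing with these isomorphisms in $\extcat{\smD}_{\mor\smC}$ gives a map $\mor\smC \to \mor\smD$ over $F_0 \times F_0$.
\item Internal functoriality of $F$ then follows by reindexing the generic identity and composition along $\ob\smC \to \mor\smC$ and $\mor\smC\times_{\ob\smC}\mor\smC \to \mor\smC$, and using the coherence of $G$'s pseudo-naturality isomorphisms with composition of reindexings.
\end{enumerate}

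\emph{Checking the inverse.} Next I construct an indexed isomorphism $G \iso \extcat{F}$. Its component at $a \in \extcat{\smC}_X$ is the coherence isomorphism $G_X(a^*\id) \iso a^*G_{\ob\smC}(\id)$, whose codomain is exactly $F_0 a$. Naturality of this isomorphism is again reduced to the generic morphism by reindexing. In the other direction, for internal $F$ the round trip $F \mapsto \extcat{F} \mapsto F$ is the identity on the nose.

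\emph{Full faithfulness on $2$-cells.} An indexed transformation $\beta : \extcat{F} \to \extcat{F'}$ is determined by its component at the generic object, $\beta_{\id} : \ob\smC \to \mor\smD$. This map is an internal natural transformation, by naturality of $\beta$ at the generic morphism. Conversely, the components of $\beta$ at other objects are recovered from this one by reindexing.

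The main obstacle is bookkeeping rather than a real difficulty. The indexed functors are pseudo, so essential surjectivity holds only up to isomorphism, not strictly. One must carefully check that the coherence isomorphisms of $G$ assemble into a genuinely natural indexed isomorphism $G \iso \extcat{F}$. I would handle this by working in the equivalent fibred formulation, where one can choose a cleavage and cartesian lifts.
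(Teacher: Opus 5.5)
Your proposal is correct. It is the standard generic-object (Yoneda-style) argument behind the results the paper cites, \cite[B2.3.3]{johnstone:elephant} and \cite[7.3.8]{jacobs:categorical-logic}; the paper itself gives no proof beyond these citations. The check you flag at the end works directly in the indexed setting, with no need to pass to the fibred formulation. That check is that $G \iso \extcat{F}$ is compatible with reindexing, and not merely natural in each fibre; it follows from the same coherence of $G$'s structure isomorphisms with composite reindexings that you already use for functoriality.
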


\begin{proposition} \label{prop:internal-regular-category}
  An internal category is regular (resp.~a topos), viewed as an indexed category, just if it admits the structure of an internal regular category (topos).
\end{proposition}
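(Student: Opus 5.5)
The plan is to treat both directions separately, working throughout with the externalisation $\extcat{\smC}$ of an internal category $\smC$ in $\cE$ (with $\cE$ cartesian, and regular where covers are involved). For the \enquote{if} direction, suppose $\smC$ carries the structure of an internal regular category (resp.\ topos), i.e.\ a model in $\cE$ of the essentially algebraic theory of \cref{prop:logical-cats-as-eats}. Each fibre $\extcat{\smC}_X$ has objects $\cE(X,\ob\smC)$ and maps $\cE(X,\mor\smC)$ over pairs of objects. Its structure operations are obtained by postcomposing with the internal structure maps: for instance, the chosen pullback of $A\to C\from B$ is computed by the internal pullback operation applied to the $X$-element $(A,B,C,\ldots)$. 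Every axiom of the theory is a cartesian sequent. It therefore holds for generalised elements, which is exactly the fibrewise statement, so each fibre is regular (resp.\ a topos). Reindexing along $f : Y \to X$ is precomposition with $f$, and therefore commutes on the nose with the chosen structure. Hence it is strictly regular (logical), and in particular regular (logical), as required.

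For the \enquote{only if} direction, assume $\extcat{\smC}$ is indexed-regular (resp.\ an indexed topos). I would construct each internal structure operation as the generic instance over its domain of definition. For pullbacks, let $X \defeq$ the object of cospans in $\smC$, a finite limit of $\ob\smC$ and $\mor\smC$. It carries a generic cospan in $\extcat{\smC}_X$. Choose a pullback of this cospan in the fibre $\extcat{\smC}_X$; its components are maps $X \to \ob\smC$, $X\to\mor\smC$, and these give the internal operation. Image factorisations, the NNO (over $X=1$), and power-objects with their membership relations (over $X=\ob\smC$) are handled the same way. The internal axioms must then be checked. Pullback-stability of the chosen structure under reindexing is automatic, because regular/logical reindexing preserves universal properties up to iso. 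Being the universal cone is expressed by the existence and uniqueness of factorisations of generalised elements. Since the fibres over arbitrary $Y$ see all generalised elements, this holds internally. Here the point from \cref{prop:logical-cats-as-eats} is used: monos and covers are expressed essentially algebraically via kernel pairs, so \enquote{mono in every fibre} agrees with the internal notion.

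The main obstacle I expect is precisely this last verification: that a universal property holding in the generic fibre transfers to the internal (essentially algebraic) universal property. The existence half is immediate from the fibre structure. The uniqueness and stability half needs the fact that reindexing preserves the chosen construction only up to isomorphism, so the witness over $Y$ is the pullback of the generic one only up to a canonical iso. I would handle this by reindexing the generic object along $g : Y \to X$, using weak preservation to identify it with any chosen construction in $\extcat{\smC}_Y$ up to a unique iso, and then using local smallness to phrase uniqueness of factorisations as an isomorphism of hom-objects. For the topos case, power-objects are also functorial only in isomorphisms. I would phrase their universal property via relations, i.e.\ monos into $A\times B$, with the same generic-element argument, and note that the NNO's recursion property is likewise a cartesian condition checked on generalised elements.
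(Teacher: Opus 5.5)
Your proposal is correct and follows the paper's proof: the \enquote{if} direction is the Yoneda observation that an internal model yields a functor $\cE^\op \to \strReg$ (resp.\ $\strLog$) with strict reindexing, and the \enquote{only if} direction chooses the internal operations as universal instances over the generic fibres, just as the paper does for products over $\ob\smC \times \ob\smC$. The obstacle you anticipate is milder than you fear: reindexing along $g : Y \to X$ is strict precomposition and is regular (resp.\ logical), so $g^*$ of the generic instance already has the required universal property in $\extcat{\smC}_Y$, and you need neither a comparison with the fibre's chosen structure nor local smallness.
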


\begin{proof}
  Any model $M$ in $\cE$ of any essentially algebraic theory $\thT$ induces by Yoneda a functor $\cE^\op \to \Mod{\thT}$;
  in particular, an internal regular category $\smC$ yields a functor $\cE^\op \to \strReg$, whose underlying indexed category is just the externalisation of $\smC$.

  Conversely, if an internal category $\smC$ is indexed-regular, then regular structure on $\smC$ can be chosen as the universal instances of the corresponding operations on the externalisation.
  For instance, the product operation $\ob\smC \times  \ob\smC \to  \ob\smC$ arises as the product of the pair $\pi_0$, $\pi_1$ in $\extcat{\smC}_{(\ob \smC \times  \ob\smC)}$.
  
  The topos case is directly analogous.
\end{proof}

\subsection{Indexed and internal $r$-toposes}

\begin{definition}
  An \defemph{indexed $r$-topos} over $\cE$ is an $\cE$-indexed category $\indC$ together with $r$-topos structure on each fibre, and such that all reindexing functors are $r$-logical.

  Given an indexed $r$-topos $\indC$ over $\cE$, the indexed subcategory $\indC_i$ is defined by taking $(\indC_i)_X \defeq (\indC_X)_i \subseteq \indC_X$ as expected; for the reindexing functor along $f : Y \to X$, we know that $f^* : \indC_X \to \indC_{Y}$ preserves rank (albeit weakly), so we may take some perturbation of it up to isomorphism which maps $(\indC_X)_i$ strictly into $(\indC_Y)_i$.
  There are evident full and faithful indexed functors $\indC_i \to \indC$ and $\indC_j \to \indC_i$ as in the non-indexed case.
\end{definition}

Most results and constructions of \cref{sec:ranked-toposes} now adapt more or less straightforwardly to indexed and internal versions, compatibly with externalisation.

\begin{proposition} \label{prop:free-r-topos-indexed-initial}
  For any category $\cE$, $\freeTop[r]$ with the \emph{constant} $\cE$-indexing is (bi-)initial among $\cE$-indexed $r$-toposes and $r$-logical functors. \qed
\end{proposition}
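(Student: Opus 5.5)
The plan is to reduce to the non-indexed results \cref{prop:free-r-topos-is-bi-initial} and \cref{prop:iso-comma-r-topos}, working fibrewise and checking compatibility with reindexing. First, the constant $\cE$-indexing of $\freeTop[r]$ is evidently an indexed $r$-topos: each fibre is $\freeTop[r]$, and every reindexing functor is the identity, hence strictly $r$-logical.

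\emph{Existence.} Given an indexed $r$-topos $\indC$ over $\cE$, I will build an $r$-logical indexed functor $F : \freeTop[r] \to \indC$. The naive approach is to choose $F_X : \freeTop[r] \to \indC_X$ by initiality for each $X$. But reindexing $f^* : \indC_X \to \indC_Y$ is only weakly $r$-logical, so $f^* F_X$ need not equal $F_Y$, and the comparison isomorphisms must be made coherent. So I take $F_X$ to be the strictly $r$-logical functor given by $1$-initiality in $\strLog[r]$. Both $f^* F_X$ and $F_Y$ are then $r$-logical functors $\freeTop[r] \to \indC_Y$. By \cref{prop:free-r-topos-is-bi-initial}, the hom-category $\LOG[r](\freeTop[r], \indC_Y)$ is contractible, so there is a \emph{unique} natural isomorphism $\theta_f : f^* F_X \iso F_Y$.

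\emph{Coherence.} The coherence conditions for an indexed functor are equations between natural isomorphisms of $r$-logical functors $\freeTop[r] \to \indC_Z$. For composable $g$, $f$, they compare the composite of $\theta_g$, $\theta_f$ and the pseudofunctoriality isomorphism $g^*f^* \iso (fg)^*$ with $\theta_{fg}$. Both sides are isomorphisms between the same pair of $r$-logical functors, so they agree by uniqueness. The unit condition holds similarly. I expect this step to be the main point needing care: we must ensure every functor involved is genuinely $r$-logical, so that contractibility applies. That holds because composites of $r$-logical functors are $r$-logical; note that rank preservation is only weak, which is exactly why the non-strict $2$-category $\LOG[r]$ is used.

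\emph{Uniqueness.} Suppose $F, G : \freeTop[r] \to \indC$ are two $r$-logical indexed functors. Fibrewise, \cref{prop:free-r-topos-is-bi-initial} gives unique natural isomorphisms $\alpha_X : F_X \iso G_X$. The naturality condition with respect to reindexing is again an equation between two isomorphisms $f^*F_X \iso G_Y$, and so holds by uniqueness. Any indexed natural isomorphism $F \iso G$ must have components $\alpha_X$, so it is unique. Hence the hom-category is contractible, and $\freeTop[r]$ with the constant indexing is bi-initial.
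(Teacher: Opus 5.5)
Your proof is correct, and it is the routine fibrewise argument the paper leaves implicit behind its \qed: contractibility of each hom-category $\LOG[r](\freeTop[r],\indC_Y)$ supplies the isomorphisms $\theta_f$, every coherence equation, and the uniqueness of indexed natural isomorphisms. One small fix is needed, since fibres such as the slices of a self-indexing need not be small: obtain $F_X$ from \cref{prop:free-r-topos-is-bi-initial}, which covers locally small targets, rather than from $1$-initiality in $\strLog[r]$. The strictness of $F_X$ buys nothing in the pseudo setting; it matters only for the strict reading of \enquote{(bi-)}, where $\indC$ is a genuine functor $\cE^\op \to \strLog[r]$ and $1$-initiality forces $f^*F_X = F_Y$ on the nose.
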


\begin{proposition}[Indexed Artin gluing] \label{prop:indexed-artin-gluing}
  Let $F : \indC \to \indD$ be a cartesian, rank-preserving functor of indexed $r$-toposes over $\cE$.
  Then the indexed comma category $\comma{\indD}{F}$ carries an $r$-topos structure, given fibrewise by the levelwise ranking as in \cref{prop:artin-gluing-r-topos}.

  Moreover, an $r$-logical functor $H : \indD \to \indD'$ induces an $r$-logical functor $\comma{\indD}{F} \to \comma{\indD'}{GF}$, strictly over $\indC$. \qed
\end{proposition}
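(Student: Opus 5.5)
The plan is to reduce everything to the non-indexed gluing of \cref{prop:artin-gluing-r-topos}, applied fibrewise, and then check that reindexing preserves the structure. For each $X \in \cE$, the fibre of the indexed comma category is $\comma{\indD}{F}_X \defeq \comma{\indD_X}{F_X}$: objects are triples $(D, C, p : D \to F_X C)$ with $D \in \indD_X$, $C \in \indC_X$. Reindexing along $f : Y \to X$ sends $(D,C,p)$ to $(f^*D, f^*C, \varphi_C \circ f^*p)$. Here $\varphi : f^* F_X \iso F_Y f^*$ is the coherence isomorphism of the indexed functor $F$. The pseudofunctoriality coherences are inherited componentwise from those of $\indC$, $\indD$ and $F$, so this is an $\cE$-indexed category. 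Since each $F_X$ is cartesian and rank-preserving between $r$-toposes, \cref{prop:artin-gluing-r-topos} makes each fibre an $r$-topos with the levelwise ranking.

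The substantive point is that each reindexing functor $\comma{\indD_X}{F_X} \to \comma{\indD_Y}{F_Y}$ is $r$-logical. This is exactly the functoriality clause of \cref{prop:artin-gluing-r-topos}, applied to the pseudo-commuting square with $I = f^* : \indC_X \to \indC_Y$, $J = f^* : \indD_X \to \indD_Y$ and the isomorphism $\varphi$; both $I$ and $J$ are $r$-logical by the definition of an indexed $r$-topos. The clause gives that the induced functor $\comma{J}{\varphi}$, which coincides with the reindexing described above, is $r$-logical. It also says that this construction respects identities and composition, so the coherence isomorphisms of the indexed comma category are themselves transported correctly.

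For the \enquote{moreover}, given $r$-logical $H : \indD \to \indD'$, take the square with $I = \id_{\indC_X}$, $J = H_X$ and the identity isomorphism $H_X F_X = (HF)_X$. The target is the comma of $HF$, which is what the statement's \enquote{$GF$} must denote. Functoriality again yields $r$-logical fibre functors $\comma{\indD_X}{F_X} \to \comma{\indD'_X}{H_X F_X}$ commuting strictly with the projections to $\indC_X$. Compatibility with reindexing, up to the coherence isos of $H$, follows from the composition-respecting part of functoriality, by pasting the two squares in either order.

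The main obstacle is coherence bookkeeping rather than any new mathematics. One has to confirm that the isomorphisms produced by functoriality satisfy the pseudofunctor and indexed-functor axioms, and that the weak rank-preservation of the $f^*$ is compatible with the levelwise ranking. For the latter, as in the definition of $\indC_i$, I would perturb reindexing up to isomorphism so that ranks are preserved strictly, and check that this perturbation is levelwise. I expect this to be routine, since \cref{prop:artin-gluing-r-topos} already absorbs the \enquote{gruesome but routine} verification.
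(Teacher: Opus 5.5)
Your proposal is correct and follows essentially the route the paper leaves implicit (it marks the proposition \qed as a routine adaptation of \cref{prop:artin-gluing-r-topos}): glue fibrewise, and get both the reindexing functors and the functor induced by $H$ (you are right that $GF$ should read $HF$) from the functoriality clause. Two small corrections. First, pasting in either order gives two different glued functors, with $J = f^*H_X$ and $J = H_Yf^*$, so making the induced functor indexed also needs the componentwise isomorphism $(\psi_D,\id_{f^*C})$ built from $H$'s coherence $\psi : f^*H_X \iso H_Yf^*$, not just composition-respect. Second, no perturbation of reindexing is needed, since the functoriality clause already makes it (weakly) rank-preserving.
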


\begin{proposition}[Internal Artin gluing] \label{prop:internal-artin-gluing}
  Let $F : \smC \to \smD$ be a cartesian, rank-preserving functor of internal $r$-toposes, in a cartesian category $\cE$.
  Then the internal comma category $\comma{\smD}{F}$ carries an internal $r$-topos structure, whose externalisation is precisely the indexed Artin gluing of \cref{prop:indexed-artin-gluing};
  moreover, the first projection $P_0 : \comma{\smD}{F} \to \smC$ is a strict map of internal $r$-toposes.
\end{proposition}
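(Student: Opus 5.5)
The plan is to build the internal comma category by writing the external Artin gluing construction of \cref{prop:artin-gluing-r-topos} directly in the internal language of the cartesian category $\cE$, using only finite limits. Set $\ob \comma{\smD}{F}$ to be the pullback of $\ob\smC \times \ob\smD \times \mor\smD$ cut out by the condition that a morphism $p$ goes from $d$ to $F_0(c)$; concretely, the pullback of $(\mathrm{dom},\mathrm{cod}) : \mor\smD \to \ob\smD^2$ along $\id \times F_0 : \ob\smD \times \ob\smC \to \ob\smD^2$. Morphisms are pairs $(g,h) \in \mor\smD \times \mor\smC$ that commute appropriately with the source and target $p$'s, again an equaliser. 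Composition and identities are inherited componentwise. Each rank predicate is levelwise: $\ob\comma{\smD}{F}_i$ is the pullback of $\ob\smC_i \times \ob\smD_i$. All of this lives in any cartesian $\cE$.

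Next I define the internal $r$-topos operations as morphisms of $\cE$. The regular structure (finite limits, images) is componentwise, as in \cref{prop:commas}\zcref[noname]{item:comma-regular}. The limit of a diagram is the pair of limits, equipped with the comparison map into $F$ of the limit, using the canonical iso $F(\lim) \iso \lim F$. This step is where cartesianness of $F$ is used: it is an internal inverse to the comparison map, which exists as a morphism of $\cE$ because the relevant equaliser is total. Image factorisations use orthogonality in $\smD$, computed by the internal diagonal-fill operation induced from the chosen images of $\smD$. The NNO is $(\NNO_\smD \to F\NNO_\smC)$, induced by the internal recursor of $\smD$. Power objects of rank-$i$ objects are given by the pullback from \cref{prop:artin-gluing-topos}. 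Its ingredients are the comparison $\xi_C : F\pow C \to \pow FC$, obtained by transposing $F(\in_C)$; the inverse-image map $p^*$; and the subset relation. Each of these is an $\cE$-morphism built from $\smD$'s chosen structure, and rank preservation of $F$ places the result in rank $i+1$. The essentially algebraic axioms of $\thT_r$ (\cref{prop:r-toposes-as-eats}) hold because each is an equation between $\cE$-morphisms. By the Yoneda argument of \cref{prop:internal-regular-category}, such an equation may be checked after externalisation, fibrewise, in each $\cE(X,-)$.

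For the comparison with \cref{prop:indexed-artin-gluing}, I would observe that externalisation commutes with finite limits of internal categories. Hence $\extcat{\comma{\smD}{F}}_X$ is literally the comma category $\comma{\extcat{\smD}_X}{F_X}$, and the chosen operations above are, by construction, the universal instances of the fibrewise gluing operations. So externalisation yields exactly the indexed gluing, up to the harmless choice of comparison isomorphisms. Since the operations on the $\smC$-component are literally those of $\smC$, the first projection $P_0$ commutes on the nose with all structure, which makes it a strict map of internal $r$-toposes.

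The main obstacle is the power-object step, for two reasons. It requires extracting an explicit internal comparison $\xi$ and the internal pullback operation uniformly in $C$, and it requires checking the universal property of the resulting object. I would handle the first point by writing everything as terms in $\smD$'s internal operations. For the second, I would avoid internal reasoning altogether: verify the universal property fibrewise, via externalisation, where it is precisely the external \cref{prop:artin-gluing-r-topos} applied in each fibre $\extcat{\smD}_X$. This is legitimate because being a power object is expressed by $\cE$-equations that are detected by generalised elements.
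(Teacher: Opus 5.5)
Your proposal is correct and is essentially the paper's argument. The paper's proof is the one-line observation that all of the $r$-topos structure on the glued category of \cref{prop:artin-gluing-r-topos} is essentially-algebraically defined; you make this explicit by writing each operation as a finite-limit construction in $\cE$ from the structure of $\smC$, $\smD$ and $F$, and by checking the axioms on generalised elements, fibrewise in each $\cE(X,-)$, where they reduce to the external gluing — which also gives the identification with the indexed gluing and the strictness of $P_0$ exactly as you describe.
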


\begin{proof}
  This amounts to checking that the $r$-topos structure on the comma category in \cref{prop:artin-gluing-r-topos} is all essentially-algebraically defined.
\end{proof}

\begin{proposition} \label{prop:internal-r-topos-from-indexed} \leavevmode
  \begin{enumerate}[(1)]
  \item \label{item:indexed-r-topos-from-internal} The externalisation of an internal ranked category (resp.~$r$-topos) carries an indexed ranking (resp.~$r$-topos structure).
  \item \label{item:internal-r-topos-from-ranked} If $\smC$ is an internal ranked category in $\cE$ whose externalisation is an $r$-topos, then $\smC$ carries the structure of an internal $r$-topos.
  \end{enumerate}
\end{proposition}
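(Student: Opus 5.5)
Both parts follow the pattern of \cref{prop:internal-regular-category}, with the ranks and power-objects as the only new ingredients.

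First, fix what an internal ranked category in $\cE$ is. By \cref{prop:r-toposes-as-eats}, it is a model in $\cE$ of the essentially algebraic theory of ranked categories. Concretely, this is an internal category $\smC$ together with a chain of subobjects $\ob\smC_0 \injto \ob\smC_1 \injto \cdots \injto \ob\smC$. Externalising, set $(\extcat{\smC}_i)_X \defeq \{ A : X \to \ob\smC \suchthat A \text{ factors through } \ob\smC_i \}$. Since $\ob\smC_i \injto \ob\smC$ is monic, this is a property of $A$ and is stable under precomposition. So reindexing strictly preserves rank, and we get an indexed ranking.

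For \zcref[noname]{item:indexed-r-topos-from-internal}, let $\smC$ be an internal $r$-topos. Yoneda, as in the proof of \cref{prop:internal-regular-category}, gives a functor $\cE^\op \to \strLog[r]$, sending $X$ to $\cE(X,\smC)$ with all structure computed pointwise. This functor is the externalisation equipped with the above ranking. Every fibre is therefore a strict $r$-topos and every reindexing is strictly $r$-logical. The only point to note is that the rank subcategories are regular subcategories in each fibre, and the power-object operation $\ob\smC_i \to \ob\smC_{i+1}$ lands in the right rank. Both hold because they are expressed by the corresponding operations of the theory $\thT_r$ factoring through the subobjects $\ob\smC_i$.

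For \zcref[noname]{item:internal-r-topos-from-ranked}, suppose $\extcat{\smC}$ is an indexed $r$-topos. I will choose each operation of $\thT_r$ as the universal instance of the corresponding fibrewise operation, over the object of $\cE$ that classifies its inputs.
\begin{enumerate}[(a)]
\item The regular structure and NNO on $\smC$ come exactly as in \cref{prop:internal-regular-category}.
\item The regular structure on each rank comes the same way. For instance, products in rank $i$ are given by the rank-$i$ product of the two projections in the fibre of $\extcat{\smC}$ over $\ob\smC_i \times \ob\smC_i$.
\item Power-objects of rank-$i$ objects come from the generic element $\ob\smC_i \injto \ob\smC$, viewed as an object of the fibre over $\ob\smC_i$. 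Its power-object in that fibre has rank $i+1$, so as a map it factors through $\ob\smC_{i+1}$. The membership relation is chosen similarly.
\item Operations with partial domains, such as the pullback of a cospan, are handled the same way. Their domains are finite limits in $\cE$, which exist because $\cE$ is cartesian.
\end{enumerate}
It then remains to check that these chosen operations satisfy the axioms of $\thT_r$. Each axiom is an equation or universal property over some domain object $D$. It holds for the generic instance in the fibre over $D$, so it holds internally.

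The main obstacle is the subtlety already flagged in the definition of indexed $r$-toposes: fibrewise structure, and reindexing's preservation of rank, are only determined up to isomorphism. I must make sure the universal choices are honest maps into $\ob\smC_i$, not merely isomorphic to such. The remedy is that in the universal fibre we choose the structure itself, using the $r$-topos structure of that fibre, so the chosen object lies in rank $i$ strictly. Universal properties, such as that of power-objects, need only hold up to the canonical isomorphisms, and they are preserved by the weakly $r$-logical reindexings. So the essentially algebraic axioms, which involve the canonical comparison isomorphisms of $\thT_r$, still hold.
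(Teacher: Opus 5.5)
Your proposal is correct and takes the same route as the paper. Part (1) is the Yoneda argument from the proof for internal regular categories. Part (2) chooses the regular structure on $\smC$ and on each rank, and the power-object maps $\ob\smC_i \to \ob\smC_{i+1}$, as universal instances of the fibrewise structure. You add detail the paper leaves implicit: the externalised ranking is strict (a map lies in rank $i$ exactly when it factors through $\ob\smC_i$), which is what makes the universal choices land in the correct rank on the nose.
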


\begin{proof}
  Part \zcref[noname]{item:indexed-r-topos-from-internal} is immediate by Yoneda, as in \cref{prop:internal-regular-category}.

  For part \zcref[noname]{item:internal-r-topos-from-ranked}, regular structure on $\smC$ and the subcategories $\smC_{\leq i}$ follows by \cref{prop:internal-regular-category}, along with regularity of the subcategory inclusions;
  and the maps $P_i : \ob \smC_i \to \ob \smC_{i+1}$ providing power-object structure may be constructed along the same lines as the proof of \cref{prop:internal-regular-category}.
\end{proof}

\comment{Possibly note somewhere comparison between indexed functors to the base category and “internal presheaves” as defined in e.g.\ Johnstone, but how the indexed approach makes the structure-preservation of the functor clearer? Maybe unnecessary.}

\subsection{Internal free $r$-toposes in arithmetic universes} \label{sec:internal-free-models}

We now turn to the question of \emph{internal} free $r$-toposes: first their existence, and then a closer inspection to determine that (as expected) they can be constructed syntactically, and hence admit a form of Gödel-numbering.

An appropriate minimal categorical setting for syntactic constructions is provided by \emph{locoses} and \emph{arithmetic universes}, introduced by Joyal and developed by Maietti and others \cite{joyal-wraith-1979,maietti:aus-via-tt}.
We briefly recall their key points here; for a more thorough introduction, see \cite{maietti:joyals-aus}. 

\begin{definition} \label{def:locos-and-au}
  A \defemph{locos} is a lextensive category with parametrised list objects.
  
  An \defemph{arithmetic universe} (AU), or \defemph{list-arithmetic pretopos}, is a locos that is moreover exact; equivalently, a pretopos with parametrised list objects.
  
  A \defemph{(locos-/AU-)logical functor} is a lextensive (resp.~pretopos) functor preserving list objects.

  We write $\LOCOS$, $\AU$ for the $2$-categories of locoses/AU’s, suitably logical functors, and natural transformations.  
\end{definition}

Joyal originally introduced arithmetic universes to give a categorical account of Gödel’s incompleteness theorems \cite{joyal-wraith-1979}.
A key step is showing the structure of an AU suffices to construct a free \emph{internal} AU
— a categorical analogue of arithmetic’s ability to encode its own syntax.
Analysing what that construction relies on leads to the generalisation:

\begin{theorem}[{\cite[Thm.~3.17]{maietti:lex-sketches-in-au}}] \label{thm:initial-models-in-aus}
  Any finitely presented essentially algebraic theory has an initial internal model in any AU, and AU-logical functors preserve these.
\end{theorem}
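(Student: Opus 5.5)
The plan is to construct the initial model by imitating the classical term-model construction internally, using the list objects of the AU as the carrier of syntax. Fix a finite presentation of $\thT$, say by a finite finite-limit sketch $\skS$ in the sense of \cref{thm:eats-characterisations}\zcref[noname]{item:eat-facts-finiteness}, or equivalently a finite cartesian (partial Horn) theory with finitely many sorts, partial operation symbols and axioms. Since the presentation is finite, it can be described by a finite diagram of finite coproducts of $1$ in any AU $\cA$. Using the parametrised list objects, we then build internally the object of all well-formed terms of each sort, as a suitable subobject of $\List(\text{symbols})$, cut out by an inductively defined well-formedness predicate. Inductive definitions of this kind are available in any locos by iterating along list objects.

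The key steps, in order, are as follows.
\begin{enumerate}[(1)]
  \item Construct the objects of terms $T_s$ for each sort $s$.
  \item Define internally the relations of \emph{definedness} $D_s \subseteq T_s$ and \emph{provable equality} $E_s \subseteq T_s \times T_s$. These are the least subobjects closed under the finitely many derivation rules of partial Horn logic together with the axioms of $\thT$.
  \item Obtain these least fixed points as images of the objects of derivation trees, encoded as lists. A derivation is a list of judgements, each following from earlier ones by a rule, so checking that a list is a derivation is a finite primitive-recursive condition expressible with list recursion. Images exist because an AU is regular.
  \item Take the quotient $M_s \defeq D_s/E_s$, using exactness; this is exactly where we need an AU rather than merely a locos.
  \item Interpret the operations on these quotients, and check that the axioms hold. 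Both follow from the closure rules defining $D$ and $E$.
  \item Prove initiality. Given any internal model $N$ in $\cA$, define an evaluation map from derivations/terms into $N$ by list recursion. Show it respects $D$ and $E$ by induction on derivations, which is again list recursion together with the induction principle for list objects. It therefore descends to a homomorphism $M \to N$, and uniqueness follows by the same structural induction.
\end{enumerate}

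Preservation under an AU-logical functor $F : \cA \to \cB$ should then be formal. Every ingredient of the construction is built from finite limits, finite coproducts, list objects, images and coequalisers of equivalence relations, and $F$ preserves all of these up to canonical isomorphism. Hence $F$ carries the constructed model to the model obtained by the same construction in $\cB$, which is initial there.

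The main obstacle I expect is step (3): showing that the inductively generated relations $D$ and $E$, which involve partial operations and hence dependent well-formedness, really are definable as least closed subobjects using only list objects in a general AU, without any power objects. The approach I would try first is to encode derivations as lists of judgement codes and define ``valid derivation'' by a decidable, primitive-recursive check. This makes the set of valid derivations a complemented subobject of a list object, and $D$, $E$ are then its images under projecting out the conclusion. The induction needed for initiality then becomes induction on the length of the list, which is available in any locos.
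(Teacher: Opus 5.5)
The paper gives no proof of its own here: it cites Maietti's manuscript and points to internalising the partial Horn syntax of \cite{palmgren-vickers} in an AU. Your architecture is exactly that route: closed terms of a finite partial Horn presentation, derivations as a complemented subobject of a list object, images, quotient by exactness, and preservation because AU-logical functors preserve every ingredient. But you have put the difficulty in the wrong place. Step (3) is routine, and your own resolution of it works. Validity of a derivation is primitive recursive, so it is a map $\List(J)\to 2$ (with $J$ the object of judgement codes), and $D_s$, $E_s$ are images.

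It is step (6) that fails as written. List recursion turns a \emph{total} step map $X\times A\to X$ into a total map out of $\List(A)$. Evaluating a closed term in an arbitrary model $N$ means applying $f^N$, which is defined only on the subobject $\mathrm{dom}(f^N)\injto N_{s_1}\times\cdots\times N_{s_n}$. That subobject need not be complemented, the carriers $N_s$ may be empty (so there is no default value), and an AU need not have partial-map classifiers. So there is no step map into $\coprod_s N_s$ or $\List(\coprod_s N_s)$, nor into either of these $+1$, whether you recurse over terms or over derivations. Partiality is precisely what separates the lex case from the published finite-product case \cite{maietti:fp-sketches-in-au}. It is the substance of the theorem, not a detail.

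What is needed instead is a relational evaluation. Take $\mathrm{Ev}_s\injto T_s\times N_s$ to be the image of an object of evaluation traces. For instance, a trace is a list of values indexed by the subterm positions of $t$; after a total, decidable bookkeeping map that picks out argument positions, it must lie in $\List$ of the coproduct of the graphs of the $f^N$. You then must prove three things: $\mathrm{Ev}_s$ is functional, $D_s$ lies in its domain, and $E_s$-related terms receive equal values.

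Soundness and totality can be done by list induction over derivations, with motive ``invalid, or every judgement sound''. This is a legitimate subobject because validity is complemented and an AU has unions.

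Functionality is harder. Soundness needs it, for instance for transitivity and for instances of the axioms, and so does the very existence of the map $M\to N$. Its motive is ``any two traces of $t$ agree'', an implication between non-complemented subobjects. An AU need not be Heyting, so your ``induction on the length of the list'' does not apply directly. The standard remedy is to move the non-decidable hypothesis into the parameter. Let $Q$ be the object of pairs of traces of the same term, and run parametrised induction on $\NNO\times Q$ to show agreement on the first $k$ positions (ordered so that arguments precede the terms containing them), using functionality of the graphs of the $f^N$.

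Step (5) similarly hides a truth lemma. You need that $\HOLinterp[\varphi][M]$ is the image of the provable closed instances of $\varphi$, for the finitely many external formulas of the presentation; this is proved by external induction on those formulas. With these supplied, the rest of your plan goes through, including the formal preservation argument.
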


Unfortunately, while the result is well-known in folklore, the only full presentation we are aware of is in the unpublished manuscript \cite{maietti:lex-sketches-in-au}; the result has never appeared in the published literature, as discussed in \cite{lumsdaine-2019:mo-free-models-of-fp-eats}.
The nearest published results we are aware of are:
\begin{enumerate}
  \item \cite{palmgren-vickers} gives a clean presentation of a syntax for essentially algebraic theories, sufficiently elementarily that (with some work) the whole presentation can be internalised in an arithmetic universe, via e.g.~the type theory for them developed in \cite{maietti:joyals-aus}.
  \item \cite[Ch.~7]{morrison:masters} constructs the initial internal AU in an AU, and states the general theorem, but leaves part of the proof conjectural.
  \item \cite{maietti:fp-sketches-in-au} gives the result for EATs presented by finite-\emph{product} sketches.
  \item \cite[\textsection 7]{maietti:joyals-aus} constructs several more initial internal categories with extra logical structure, heuristically demonstrating all the techniques needed for the general theorem.
\end{enumerate}

The specific case we require is just:

\begin{theorem} \label{thm:internal-free-r-topos-exists-and-preserved}
  For each $r \in \N$, any arithmetic universe $\cE$ has an initial internal strict $r$-topos, which we denote $\freeTop[r][\cE]$; and AU-logical functors preserve these.
\end{theorem}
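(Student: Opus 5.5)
This is a direct instance of \cref{thm:initial-models-in-aus}. All that is needed is that, for $r \in \N$, the theory of strict $r$-toposes is a \emph{finitely presented} essentially algebraic theory. That is exactly \cref{prop:r-toposes-as-eats}(2), so the plan reduces to checking this carefully and then invoking the general theorem.

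\textbf{Step 1: finite presentability.} Write down an explicit finite cartesian theory (or finite finite-limit sketch, using \cref{thm:eats-characterisations}\zcref[noname]{item:eat-facts-finiteness}) whose models in any cartesian category are strict $r$-toposes. It has:
\begin{itemize}
\item sorts for objects and morphisms, and for each $i < r$ a sort $\ob_i$ of rank-$i$ objects, with inclusion maps $\ob_i \to \ob_{i+1} \to \ob$;
\item partial operations for chosen finite limits, image factorisations, the NNO, and the power-objects $\pow : \ob_i \to \ob_{i+1}$ for $i < r-1$, together with their membership relations;
\item partial operations witnessing the universal properties, e.g.\ the induced map into a limit, or the classifying map into $\pow X$ of a subobject of $Y \times X$.
\end{itemize}
Two points need care. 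First, monos and covers must be handled essentially-algebraically: monos are maps whose kernel-pair projections coincide, and covers are stated via stability of the chosen image factorisation. Second, closure of each rank level under regular structure is expressed by requiring the chosen operations to restrict to $\ob_i$. Since $r$ is finite, there are only finitely many levels and hence finitely many sorts, operations and axioms. This is where $r \in \N$ is essential; $\thT_\infty$ is not finitely presented.

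\textbf{Step 2: apply \cref{thm:initial-models-in-aus}.} This gives an initial internal model $\freeTop[r][\cE]$ of $\thT_r$ in any AU $\cE$. Its preservation clause says AU-logical functors $F : \cE \to \cE'$ send it to an initial model, i.e.\ $F(\freeTop[r][\cE]) \iso \freeTop[r][\cE']$. By \cref{def:internal-cats}, internal models of $\thT_r$ are exactly internal strict $r$-toposes and their homomorphisms are strict $r$-logical internal functors, so this is precisely the claimed statement.

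\textbf{Main obstacle.} The substantive content sits in \cref{thm:initial-models-in-aus} itself, which we are assuming. On our side the real risk is Step 1. The universal-property clauses for power-objects quantify over subobjects of $Y \times X$ for arbitrary $Y$, and we must present them by partial operations defined on equationally specified domains (pullbacks of already-available sorts), so that no genuine quantification slips in. I would first try to copy the standard essentially-algebraic axiomatisation of toposes via power-objects, then index it by ranks $0,\dots,r-1$, checking that each axiom mentions only boundedly many ranks.
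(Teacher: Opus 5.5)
Your proposal is correct and is essentially the paper's own argument: the paper proves this by a direct appeal to \cref{thm:initial-models-in-aus}, with finite presentability of $\thT_r$ supplied by \cref{prop:r-toposes-as-eats}, and mentions adapting the known concrete constructions of the initial internal AU as an alternative route. One small caution: in the paper's definition each rank carries its own chosen regular structure, which the inclusions need not preserve strictly, so your encoding \enquote{chosen operations restrict to $\ob_i$} axiomatises a slightly different (though still finitely presented) strict notion; this is harmless, since you can simply cite \cref{prop:r-toposes-as-eats}(2) rather than re-deriving it.
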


\begin{proof}
  By \cref{thm:initial-models-in-aus}; or more concretely, by adapting the constructions of the initial internal AU from \cite[Ch.~7]{morrison:masters}, \cite[\textsection 7]{maietti:joyals-aus}.
\end{proof}

We will make extensive use of these internal free $r$-toposes.
However, for our endgame, we need to squeeze a bit more juice out of their construction.
As described in the sketch, we will need that elements of the free $r$-topos admit Gödel-numbering — that is, are internally enumerable — and hence admit choice functions by taking minimal witnesses.

We therefore develop the basic theory of \emph{enumerable} objects in locoses and arithmetic universes --- their choice properties, and their appearance in initial algebraic structures.
Most ideas involved appear in earlier work \cite{joyal-wraith-1979,morrison:masters,maietti:joyals-aus}, but not quite in the forms we require.

\begin{definition}
  A \defemph{predicate} in a locos%
  \footnote{Predicates are usually defined assuming just finite products and an NNO, using the idempotent $\min (1,-) : \NNO \to \NNO$ to simulate $2 = \{0,1\}$ as a formal retract of $\NNO$.}
  is a map $\NNO \to 2$.
  The \defemph{realisation} of a predicate $p$ is the pullback along $p$ of $1 \to 2$, a complemented subobject of $\NNO$.
  Write $\Pred(\cE)$ for the category of predicates in $\cE$ and maps in $\cE$ between their realisations.

  Call an object of $\cE$ \defemph{strictly enumerable} if it admits a complemented monomorphism to $\NNO$, or equivalently lies in the essential image of $\Pred(\cE)$.
  More generally, call a map $p : B \to A$ in $\cE$ strictly enumerable if it is so as an object of the slice $\cE/A$.

  When $\cE$ is regular, call an object of $\cE$ \defemph{enumerable} if it is covered by (i.e.~admits a cover from) a strictly enumerable object; call a map enumerable if it is enumerable as an object of the slice.
\end{definition}

\begin{proposition}
  Strictly enumerable maps in any locos (resp.~enumerable maps in a regular locos) are stable under pullback, closed under composition, and preserved by (regular) locos-logical functors.
\end{proposition}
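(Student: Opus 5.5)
We handle the three closure properties separately, first for strictly enumerable maps in a locos and then for enumerable maps in a regular locos. Throughout we use the standard locos facts that $\NNO$ is stable under slicing ($A \times \NNO \to A$ is the NNO of $\cE/A$, via parametrised list objects) and that coproducts are disjoint and pullback-stable, so complemented subobjects are pullback-stable.

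\emph{Strictly enumerable maps.} For pullback stability, let $p : B \to A$ be given with a complemented mono $m : B \injto A \times \NNO$ over $A$, and let $f : A' \to A$. Then $f^*m : f^*B \injto A' \times \NNO$ is again a complemented mono, since pullback preserves monos and, by extensivity, coproduct decompositions. For preservation, a locos-logical functor preserves finite limits, coproducts and $\NNO$, so it sends $A \times \NNO$ to $FA \times \NNO$ and complemented monos to complemented monos.

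Closure under composition is the main step. Take $C \injto B \times \NNO$ over $B$ and $B \injto A \times \NNO$ over $A$. Composing gives $C \injto B \times \NNO \injto A \times \NNO \times \NNO$ over $A$. The first map is $m \times \NNO$, a complemented mono because $\NNO \times -$ preserves coproducts. Composites of complemented monos are complemented, by extensivity. It then remains to compose with an isomorphism $\NNO \times \NNO \iso \NNO$, the Cantor pairing, which exists in any locos: list objects give enough primitive recursion to define pairing and unpairing and to verify that they are mutually inverse, as in \cite{joyal-wraith-1979,maietti:joyals-aus}. All of this is stable over the base $A$ by parametrisation. If constructing the pairing internally is awkward, a fallback is to use a complemented mono $\NNO \times \NNO \injto \List(\NNO)$ together with a complemented mono $\List(\NNO) \injto \NNO$ obtained by encoding lists. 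Preservation of this composite by $F$ is automatic.

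\emph{Enumerable maps in a regular locos.} Here $p : B \to A$ is enumerable if there is a cover $S \coverto B$ over $A$ with $S \to A$ strictly enumerable. Pullback stability and preservation under regular locos-logical functors follow from the strict case, since covers are pullback-stable and preserved by regular functors. For composition, take $C \to B \to A$ with a cover $T \coverto C$ where $T \to B$ is strictly enumerable, and a cover $S \coverto B$ where $S \to A$ is strictly enumerable. Pull $T$ back along $S \to B$ to get $T' \to S$, which is strictly enumerable by pullback stability, and $T' \coverto T$, a cover. Then $T' \to S \to A$ is strictly enumerable by the previous paragraph, and $T' \coverto T \coverto C$ is a cover over $A$, as required.

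The expected main obstacle is the internal isomorphism $\NNO \times \NNO \iso \NNO$, or failing that a complemented embedding of $\NNO \times \NNO$ into $\NNO$, in a mere locos.
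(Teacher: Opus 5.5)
Your proposal is correct and follows the same route as the paper. The paper's proof is a single line: the only non-trivial point is composition, handled by the standard encoding $\NNO \times \NNO \injto \NNO$ (your Cantor pairing, or any complemented encoding, works). Your remaining steps fill in the "direct" parts: pullback-stability and preservation follow from extensivity, and the enumerable case follows by pulling back covers.
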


\begin{proof}
  Direct, using the standard encoding $\NNO^2 \injto \NNO$ for composition.
\end{proof}

\begin{proposition} \label{prop:enum-to-enum-diag-enum}
  In a regular locos, any map from an enumerable object to an object with enumerable diagonal is enumerable.
  In particular, any map from an enumerable to a strictly enumerable object is enumerable.
\end{proposition}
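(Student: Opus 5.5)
We must show that for $f : A \to B$ with $A$ enumerable and the diagonal $\Delta_B : B \to B \times B$ enumerable, the map $f$ is enumerable as an object of $\cE/B$. That is, we want a strictly enumerable map $S \to B$ together with a cover $S \coverto A$ over $B$.

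We work throughout in the regular locos $\cE$, using pullback-stability and closure under composition of (strictly) enumerable maps from the preceding proposition. Choose a strictly enumerable $E$ and a cover $q : E \coverto A$. Then $E \times B \to B$ is strictly enumerable, being the pullback of $E \to 1$. Consider the subobject
\[ S \defeq \{ (e,b) \in E \times B \suchthat f q(e) = b \} \injto E \times B, \]
which is the pullback of $\Delta_B$ along $fq \times \id : E \times B \to B \times B$. By pullback-stability, $S \injto E \times B$ is an enumerable map. Composing with the projection $E \times B \to B$ gives a map $S \to B$ that is enumerable, provided we know that composites of an enumerable map with a strictly enumerable one are enumerable; this is part of the closure under composition.

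Next, the projection $S \to E$ is an isomorphism, since $S$ is the graph of $fq$. So the map $S \to B$ is just $fq$, and $S \iso E \to A$ is a cover commuting over $B$. Now take a cover $T \coverto S$ over $B$ with $T \to B$ strictly enumerable, as provided by enumerability of $S \to B$. The composite $T \coverto S \iso E \coverto A$ is then a cover over $B$, exhibiting $f$ as enumerable.

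The one point needing care, and the likely obstacle, is that "enumerable" in a slice requires a cover by a strictly enumerable object of the slice. Composition of enumerable maps in general uses regularity: one pulls back covers and uses that covers compose. I expect this to be exactly the content of the previous proposition's closure under composition, with the encoding $\NNO^2 \injto \NNO$ handling the strict parts. The second claim follows because a strictly enumerable object $B \injto \NNO$ (complemented) has diagonal that is a complemented subobject of $B \times B$, obtained by pulling back the decidable diagonal of $\NNO$. Equality on $\NNO$ is decidable in a locos, so this diagonal is strictly enumerable, hence enumerable.
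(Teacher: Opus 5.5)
Your argument is correct and is essentially the paper's. Both factor a map through its graph as a pullback of the target's diagonal followed by a pullback of the source's map to $1$, then use pullback-stability and closure under composition; the second claim is handled the same way, via decidable equality of $\NNO$ restricting to complemented subobjects. The only difference is your preliminary passage to a strictly enumerable cover $q : E \coverto A$, which is harmless but unnecessary: $A \times B \to B$ is already enumerable as a pullback of the enumerable map $A \to 1$, so the graph factorisation can be applied to $f$ directly.
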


\begin{proof}
  For any $f : B \to A$, the graph factorisation $(\id,f) : B \to B \times A \to A$ exhibits $f$ as a pullback of the diagonal $A \to A \times A$ followed by a pullback of $B \to 1$; so if these are both enumerable, so is $f$.
  
  For the strictly enumerable case, note that any subobject of $\NNO$ inherits its decidable equality, so certainly has enumerable diagonal.
\end{proof}

That proposition, together with the next few results, amount to an analysis and generalisation of the construction of images in $\Pred \cE$ from \cite[Proposition 4.7]{maietti:joyals-aus}.

\begin{definition}
  Say a map has \emph{split image} if it factors as a split epi followed by a mono.
\end{definition}

\begin{proposition} \label{prop:split-image-facts}
  In any cartesian category,
  \begin{enumerate}[(1)]
  \item a split image is a stable image factorisation;
  \item \label{item:split-image-cover-splits} any cover with split image is split;
  \item \label{item:split-image-if-covered} if $f : B \to A$ is covered by $g : \bar{B} \to A$ (i.e.~$g = ef$, with $e$ a cover), and $g$ has split image, then so does $f$. \qed
  \end{enumerate}
\end{proposition}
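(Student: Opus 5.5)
The three parts are elementary diagram chases in a cartesian category; I will treat them in order, reusing (1) in the later parts. Throughout, a split image of $f : B \to A$ means $f = m s$ with $s : B \to I$ split epi (section $t : I \to B$, $st = \id_I$) and $m : I \to A$ mono.

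For (1), I first check that $s$ is a cover. If $s$ factors through a mono $n : J \injto I$ as $s = n s'$, then $\id_I = st = n s' t$. So $n$ is split epi as well as mono, hence an isomorphism. Split epis are preserved by any functor, in particular by pullback along any map into $I$. Therefore pullbacks of $s$ are again split epi, and so are covers by the same argument. Thus $f = ms$ is a stable image factorisation. The only care needed is in pulling back the section: given $h : I' \to I$, the maps $t h$ and $\id_{I'}$ induce a section of the pulled-back map into $B \times_I I'$.

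For (2), suppose $f$ is a cover with split image $f = ms$. Since $f$ factors through the mono $m$, and a cover factors through no proper subobject, $m$ is an isomorphism. Then $t m^{-1}$ is a section of $f$.

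Part (3) is the only one with any content. Take $g = f e$ with $e : \bar B \coverto B$ a cover, and $g = m s$ a split image, with $s$ split by $t$. I will factor $f$ through $m$ and produce a splitting.
\begin{enumerate}[(a)]
\item Form the pullback of $m$ along $f$, giving a mono $B' \injto B$. Since $f e = m s$ factors through $m$, the cover $e$ factors through $B' \injto B$. A cover factors through no proper subobject, so $B' \injto B$ is an isomorphism. Hence $f = m s'$ for some $s' : B \to I$.
\item Since $m$ is mono, $m s' e = f e = m s$ gives $s' e = s$.
\item Then $s' (e t) = s t = \id_I$, so $s'$ is split epi with section $e t$, and $f = m s'$ is a split image.
\end{enumerate}

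I do not anticipate a real obstacle: everything uses only finite limits and the definition of cover. The one point to keep straight is the order of factorisation, namely that $g = f e$ rather than $e f$, which is what the phrase ``$f$ is covered by $g$'' must mean for the statement to typecheck.
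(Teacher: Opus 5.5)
Your proposal is correct. The paper gives no proof here, only \qed, and your three verifications are the routine diagram chases it leaves to the reader. You also rightly read $g = ef$ as $g = f \circ e$ with $e : \bar{B} \coverto B$, which is how the paper uses part (3) in showing that enumerable maps have split image.
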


\begin{proposition} \label{prop:enum-map-split-image}
  Any enumerable map has split image.
\end{proposition}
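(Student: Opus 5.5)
We work in a regular locos $\cE$, where enumerability is defined. Let $f : B \to A$ be enumerable, i.e.\ $B$, as an object of $\cE/A$, is covered by a strictly enumerable object. By \cref{prop:split-image-facts}\zcref[noname]{item:split-image-if-covered}, it suffices to show that every \emph{strictly} enumerable map $g : \bar B \to A$ has split image. So we first reduce to that case. Working in the slice $\cE/A$, which is again a regular locos with NNO $A \times \NNO$ and list objects stable under pullback, we may further assume $A = 1$ after relativising. Concretely: $\bar B \injto A \times \NNO$ is a complemented subobject over $A$, given by a predicate $p : A \times \NNO \to 2$.

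For the strictly enumerable case, the plan is to construct the \enquote{least witness} section. The image of $g$ is the subobject $I \injto A$ of those $a$ for which some $n$ has $p(a,n) = 1$. Over $I$ we want a map $I \to \bar B$ choosing the least such $n$. I would build this via the parametrised list object / primitive recursion available in a locos. Define, by recursion on $n$, the bounded minimisation $\mu_{\le n} : A \times \NNO \to 1 + \NNO$, returning either \enquote{none found yet} or the least $m \le n$ with $p(a,m)=1$. This is a legitimate primitive-recursive definition in a locos, since coproducts are disjoint and stable. Then consider the subobject $S \injto A \times \NNO$ of pairs $(a,m)$ such that $p(a,m)=1$ and $p(a,k)=0$ for all $k < m$. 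This subobject is complemented and is defined by a predicate, via bounded quantification computed through $\mu$. So $S$ is again a strictly enumerable object over $A$ and a complemented subobject of $\bar B$.

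Next we show that $S \to A$ is monic. If $(a,m),(a,m') \in S$, then decidable comparison on $\NNO$, with trichotomy expressed as a coproduct decomposition, gives $m=m'$. This holds since each of $m<m'$ and $m'<m$ contradicts minimality, and those summands are therefore initial. We also show that $S \to A$ has the same image as $g$. Any $(a,n) \in \bar B$ yields $(a,\mu_{\le n}(a)) \in S$, giving a map $\bar B \to S$ over $A$, which is a retraction of the inclusion $S \injto \bar B$. Hence $g$ factors as $\bar B \to S \injto A$ with the first map split epi (split by the inclusion), and $S \to A$ mono. This is a split image factorisation.

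The main obstacle I expect is carrying out the internal primitive recursion for $\mu_{\le n}$ and the bounded-quantifier predicate cleanly in a bare locos (no exactness, no internal logic beyond lextensivity plus lists), parametrised over $A$. I would handle it by using parametrised list objects to get parametrised primitive recursion on $\NNO$ (folding over $[0,\dots,n]$). The required equalities (minimality, the retraction) would then be checked by induction using the uniqueness clause of the recursion. This follows the analysis of images in $\Pred\cE$ in \cite[Proposition 4.7]{maietti:joyals-aus}.
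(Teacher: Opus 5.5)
Your proposal is correct and follows the paper's proof. The paper likewise takes the subobject of minimal representatives as the split image, handles general strictly enumerable maps by passing to the slice over the base, and handles enumerable maps by \cref{prop:split-image-facts}\zcref[noname]{item:split-image-if-covered}. You do these steps in a different order and make explicit the bounded-minimisation retraction and the monicity argument, which the paper leaves implicit.
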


\begin{proof}
  Given a strictly enumerable object $A = \AUinterp[ x \of \NNO \suchthat \alpha(x) = 1]$, a split image for $A \to 1$ is given by the subobject of \emph{minimal} representatives, $\AUinterp[ x \of \NNO \suchthat \alpha(x) = 1 \land \bigland_{y < x} (\alpha(y) = 0) ]$.

  The case for a general strictly enumerable map follows by passing to the slice over its base; and for an enumerable map, by  \cref{prop:split-image-facts}\zcref[noname]{item:split-image-if-covered}.
\end{proof}

\begin{proposition} \label{prop:enum-to-strenum-split-image}
  Any map from an enumerable object to an object with enumerable diagonal (in particular, any strictly enumerable object) has split image. Any cover between such objects splits.
\end{proposition}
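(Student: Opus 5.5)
The plan is to combine the two preceding propositions directly. Given $f : B \to A$ with $B$ enumerable and $A$ having enumerable diagonal, \cref{prop:enum-to-enum-diag-enum} tells us that $f$ is an enumerable map, i.e.\ enumerable as an object of the slice over $A$. Then \cref{prop:enum-map-split-image} gives that $f$ has split image. (The parenthetical case, where $A$ is strictly enumerable, is already handled in \cref{prop:enum-to-enum-diag-enum}: a complemented subobject of $\NNO$ inherits decidable equality, so its diagonal is complemented in $A \times A \hookrightarrow \NNO^2 \iso$ a complemented subobject of $\NNO$, hence strictly enumerable.)

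For the second sentence, let $e : B \coverto A$ be a cover between such objects. By the first part $e$ has split image, and a cover with split image is split by \cref{prop:split-image-facts}\zcref[noname]{item:split-image-cover-splits}. Concretely, if $e = m s$ with $s$ split epi and $m$ mono, then $e$ factoring through $m$ forces $m$ to be an isomorphism since $e$ is a cover, so $e$ is itself split epi.

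There is no real obstacle here. The one point to check is that \cref{prop:enum-map-split-image} really applies to a map enumerable in the slice sense, rather than only to enumerable objects. Its proof handles this explicitly: it passes to the slice for strictly enumerable maps and then uses \cref{prop:split-image-facts}\zcref[noname]{item:split-image-if-covered}. The other point is that the ambient category is a regular locos, so that ``enumerable'' makes sense and the stability facts used in \cref{prop:enum-to-enum-diag-enum} hold. Minor care is needed that the split image in the slice over $A$ is a split image in $\cE$; this holds because section and mono in $\cE/A$ are in particular maps in $\cE$, and monos in the slice are monos in $\cE$.
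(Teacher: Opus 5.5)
Your proposal is correct and is the paper's own argument: \cref{prop:enum-to-enum-diag-enum} makes the map enumerable, \cref{prop:enum-map-split-image} gives it a split image, and \cref{prop:split-image-facts}\zcref[noname]{item:split-image-cover-splits} makes a cover with split image split. The one imprecise spot is your parenthetical. What decidable equality of $A$ gives is that the diagonal $A \to A \times A$ is a complemented subobject, so it is strictly enumerable as an object of $\cE/(A \times A)$, for instance via $a \mapsto (0,a,a)$ into $\NNO \times A \times A$; it is not a statement about $A \times A$ embedding into $\NNO$.
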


\begin{proof}
  By \cref{prop:enum-map-split-image} together with \cref{prop:enum-to-enum-diag-enum,prop:split-image-facts}\zcref[noname]{item:split-image-cover-splits}.
\end{proof}

\begin{proposition}[{\cite[Proposition 4.7]{maietti:joyals-aus}}] \label{prop:strenum-is-reg-sublocos}
  Let $\cE$ be a locos.
  Then $\Pred{\cE}$ is a regular sub-locos of $\cE$, in which every cover splits; equivalently, all objects are projective.
\end{proposition}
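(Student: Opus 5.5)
We must show that $\Pred{\cE}$ is closed in $\cE$ under finite limits, finite coproducts and list objects, and that every map of $\Pred{\cE}$ has an image factorisation inside $\Pred{\cE}$ which is stable, with every cover splitting. Throughout we identify $\Pred{\cE}$ with the full subcategory of strictly enumerable objects; the category is replete up to the choice of representing predicate.

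\emph{Sub-locos.} The terminal object is the realisation of the constant predicate $1$, and $\NNO$ itself is realised by the constant predicate $1$. For a pullback $B \times_A C$ of realisations, take the subobject of $\NNO$ cut out by the predicate
\[ n \mapsto [\,\pi_0 n \in B \land \pi_1 n \in C \land f(\pi_0 n) = g(\pi_1 n)\,], \]
using the pairing iso $\NNO^2 \iso \NNO$. Here $f(\pi_0 n)$ only makes sense on $B$, so we use extensivity: $\NNO \iso B + \neg B$, and we define the predicate by cases on this coproduct, together with decidable equality on $\NNO$, which a locos has via primitive recursion. Coproducts $B + C$ are realised via even/odd encoding. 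For list objects, $\List(\NNO)$ has a standard coding $\List(\NNO) \iso \NNO$ in a locos, via iterated pairing and length. Then $\List(B)$ is the complemented subobject of codes all of whose entries lie in $B$; the predicate is defined by list recursion, and the parametrised list universal property restricts. Since these are complemented subobjects built from the ambient structure, the inclusion preserves the lextensive and list structure (up to iso).

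\emph{Images and splitting.} Given $f : B \to A$ in $\Pred{\cE}$, apply the minimal-representative construction from the proof of \cref{prop:enum-map-split-image}, which is valid in any locos: bounded quantifiers over $\NNO$ are definable by primitive recursion. Let $I \subseteq B$ be the elements $b$ such that no $b' < b$ in $B$ has $f(b') = f(b)$. This is a decidable predicate, so $I$ is strictly enumerable, and $f|_I : I \to A$ is monic, by decidable equality on $A$ and trichotomy on $\NNO$. The map $B \to I$ sending $b$ to the least $b'$ with $f(b') = f(b)$ is defined by bounded minimisation and is split by the inclusion. So $f$ has a split image in $\Pred{\cE}$. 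By \cref{prop:split-image-facts}, split images are stable image factorisations, so $\Pred{\cE}$ is regular, with images computed as in $\cE$ where they exist. Moreover every cover in $\Pred{\cE}$ has split image and hence splits, again by \cref{prop:split-image-facts}. Together with \cref{prop:projective-equivalent-split}, this says that all objects are projective.

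\emph{Main obstacle.} The key subtlety is that $\cE$ itself is not assumed regular, so we cannot appeal to \cref{prop:enum-to-strenum-split-image}. We must instead verify directly, in the locos, that the minimal-representative map is well defined and that $I \to A$ is monic. The monicity needs a "first-order" argument on $\NNO$ using only primitive recursion and decidable equality, internalised via extensive case splitting. I would carry this out by working in the internal primitive-recursive arithmetic available in any locos, as in \cite{maietti:joyals-aus}.
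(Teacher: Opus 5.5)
Your proposal is correct and takes essentially the paper's approach: closure under the locos structure by coding into $\NNO$, then regularity and splitting of covers from the minimal-representative split image. Your direct treatment of maps between strictly enumerable objects just unfolds the case of \cref{prop:enum-to-strenum-split-image} that the paper invokes, and rightly avoids the regularity assumption under which that proposition is stated. One slip to fix: the terminal object is realised by a predicate such as $n \mapsto [n = 0]$, not by the constant predicate $1$, which realises $\NNO$.
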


\begin{proof}
  Closure under the locos structure of $\cE$ is generally straightforward;
  regularity and splitting are by \cref{prop:enum-to-strenum-split-image}.
\end{proof}

Our next main goal is the fact that in the internal free $r$-topos $\freeTop[r][\cE]$, each type is enumerable.
Classical Gödel-numbering achieves this by constructing the free model entirely within the world of formal quotients of recursive subsets of $\N$.
Categorically, this amounts to lifting $\freeTop[r][\cE]$ to a suitable category of such formal quotients: the exact completion of $\Pred{\cE}$.

We recall the definition briefly; for full details see \cite{carboni-vitale:reg-and-ex}, \cite[A3.3]{johnstone:elephant}.

\begin{definition}
  The \defemph{exact-over-lex} or briefly \defemph{ex/lex completion} of a cartesian category $\cE$, denoted $\exlex{\cE}$, has objects pseudo-equivalence relations in $\cE$, i.e.~maps $R \to A \times A$ in $\cE$ satisfying reflexivity, symmetry, and transitivity.
  The \defemph{exact-over-regular} or \defemph{ex/reg completion}, $\exreg{\cE}$, is the full subcategory of $\exlex{\cE}$ on equivalence relations, i.e.~objects with $R \injto A \times A$ mono.
\end{definition}

\begin{proposition} \label{prop:ex-completions-universal}
  If $\cE$ is cartesian (resp.~regular), then $\exlex{\cE}$ (resp.~$\exreg{\cE}$) is exact,
  and these constructions form left bi-adjoints to the forgetful $2$-functors $\EX \to \LEX$, $\EX \to \REG$.

  If $\cE$ is additionally lextensive, then so are $\exlex{\cE}$ and $\exreg{\cE}$, and these form left adjoints to the forgetful $2$-functors $\PRETOP \to \LEXT$, $\PRETOP \to \REGLEXT$.
\end{proposition}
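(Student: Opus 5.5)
The exactness and the universal properties of $\exlex{\cE}$ and $\exreg{\cE}$ are standard, so for these the plan is to cite \cite{carboni-vitale:reg-and-ex} and \cite[A3.3]{johnstone:elephant} and only recall the outline.
Finite limits in $\exlex{\cE}$ are computed on underlying objects, with pseudo-equivalence relations combined componentwise.
Maps $(A,R) \to (B,S)$ are maps $A \to B$ carrying $R$ into $S$, quotiented by $S$-relatedness.
The quotient of an equivalence relation on $(A,R)$, given by $T \to A \times A$ with $R$ factoring through it, is just $(A,T)$; effectiveness and pullback-stability then follow by direct computation.

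For the universal property, the plan is as follows.
\begin{enumerate}
\item Use the embedding $A \mapsto (A, \Delta_A)$; it is cartesian, and regular in the ex/reg case.
\item Given a cartesian functor $F$ into an exact category $\cD$, extend it by sending $(A,R)$ to the coequaliser of $FR \rightrightarrows FA$. This coequaliser is the quotient of the image of $FR$ in $FA \times FA$, which is an equivalence relation.
\item Check that this extension is exact and that it is essentially unique, since every object $(A,R)$ is the quotient of the equivalence relation induced on the embedded object $(A,\Delta_A)$.
\end{enumerate}
The $2$-cell part follows because natural transformations are determined on the generating objects $(A,\Delta_A)$.
In the ex/reg case one restricts to regular $F$ and to genuine equivalence relations. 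Regularity of $F$ is exactly what is needed for the extension to preserve covers, because covers in $\exreg{\cE}$ are generated by covers in $\cE$ together with quotient maps.

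The new content is the lextensive clause, and my plan for it is the following.
\begin{enumerate}
\item Define coproducts componentwise: $(A,R) + (B,S) \defeq (A+B,\, R+S)$, where $R + S \to (A+B)^2$ is the evident map through $A^2 + B^2 \injto (A+B)^2$. Reflexivity, symmetry and transitivity of $R+S$ follow from those of $R$ and $S$, using distributivity of pullbacks over coproducts in $\cE$. In the ex/reg case, $R+S$ remains monic because coproduct injections are disjoint monos.
\item Check that the coproduct is universal. A map out of $A+B$ is a pair of maps, and relatedness is also checked componentwise, again by extensivity of $\cE$.
\item Check disjointness. The pullback of the two injections is computed on underlying objects as $A \times_{A+B} B$ with a pseudo-equivalence relation on it, and this underlying object is $0$ in $\cE$.
\item Check stability under pullback. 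A map $(C,T) \to (A+B, R+S)$ is represented by some $C \to A+B$, which splits $C$ as $C_A + C_B$. One must show that this splitting is compatible with $T$, namely that $T$ decomposes as $T_A + T_B$. This holds because $T$ maps into $R+S$, and pulling back along the two summands separates $T$.
\end{enumerate}

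The main obstacle is this stability step, and I would handle it carefully in the internal language of the extensive category $\cE$, reasoning by cases on which summand an element lies in.
Pretopos structure of the completion then follows from being exact and lextensive.

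For the adjunction $\PRETOP \to \LEXT$ (respectively $\PRETOP \to \REGLEXT$), two points remain.
\begin{enumerate}
\item The extension of a lextensive functor $F$ preserves finite coproducts. This holds because coequalisers commute with coproducts in a pretopos and $F$ preserves $A+B$ and $R+S$.
\item The unit $\cE \to \exlex{\cE}$ is lextensive.
\end{enumerate}
Given these, the restricted bi-adjunction follows from the exact case.
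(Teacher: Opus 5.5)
Your overall route is the paper's. Its proof cites Carboni--Vitale and the Elephant for the exact cases, and says the lextensive case follows from the componentwise coproducts of Carboni (Lemma~2.2) and Menni. Your $(A+B,R+S)$ is exactly that construction made explicit, and for $\exlex{\cE}$ your lextensive argument is essentially right, modulo the second point below.

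\textbf{Gap in the ex/reg case.} You treat maps of $\exreg{\cE}$ as in $\exlex{\cE}$: maps of $\cE$ modulo the relation, with only the relations required to be monic. That is how your universality and stability steps use them, and it matches the paper's phrasing of the definition. With those maps the unit $A \mapsto (A,\Delta_A)$ is \emph{not} regular.
\begin{itemize}
\item Let $e : B \coverto A$ be a cover with no section. Then $[e] : (B,\Delta_B) \to (A,\Delta_A)$ factors via $[\id_B]$ through $[e] : (B, B\times_A B) \to (A,\Delta_A)$.
\item This second map is monic: if $ex = ey$, then $(x,y)$ lands in $B\times_A B$.
\item It is invertible only if $e$ splits.
\end{itemize}
So $[e]$ is not a cover, and the biadjunction over $\REG$ fails. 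The paper's proof sidesteps this by citation, and the slip is harmless in its application, since covers in $\Pred \cE$ split.

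The universal property you cite is for the ex/reg completion whose maps $(A,R) \to (B,S)$ are total, single-valued relations $\phi \injto A\times B$, saturated for $R$ and $S$. For that category your lextensive steps must be recast.
\begin{itemize}
\item \emph{Universality.} A map out of $(A+B,R+S)$ is a subobject of $(A+B)\times C \iso A\times C + B\times C$, hence a pair of functional relations.
\item \emph{Stability.} A map $\phi : (C,T) \to (A+B,R+S)$ is not represented by any $C \to A+B$. Instead, split $\phi \iso \phi_A+\phi_B$ by extensivity, and let $C_A, C_B \injto C$ be the images of $\phi_A$ and $\phi_B$.
\item They jointly cover $C$ by totality.
\item They are disjoint by single-valuedness, since $R+S$ relates no element of $A$ to one of $B$.
\item Hence $C_A+C_B \to C$ is a monic cover, so an isomorphism. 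Saturation of $\phi$ then splits $T$ as $T_A+T_B$.
\end{itemize}

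\textbf{Finite limits in $\exlex{\cE}$.} Beyond products, these are not computed on underlying objects. The pullback of $[f] : (X,X_1) \to (Z,Z_1)$ and $[g] : (Y,Y_1) \to (Z,Z_1)$ has underlying object $X \times_Z Z_1 \times_Z Y$, because representatives are only determined modulo $Z_1$. Similarly, the equaliser of $[f],[g] : (A,R) \to (B,S)$ has underlying object $A \times_{B\times B} S$. Two of your checks need adjusting accordingly.
\begin{itemize}
\item \emph{Disjointness.} The underlying object is $(A\times B)\times_{(A+B)^2}(R+S)$, not $A\times_{A+B}B$. It is still $0$, since $R+S$ lands in $A^2+B^2$, which is disjoint from $A\times B$ in $(A+B)^2$.
\item \emph{Stability.} You have shown $(C,T) \iso (C_A,T_A)+(C_B,T_B)$ over $(A,R)+(B,S)$, but not yet that $(C_A,T_A)$ is the pullback of the injection. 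Either compute that pullback directly: it is $C_A\times_A R$, which is isomorphic in $\exlex{\cE}$ to $(C_A,T_A)$ via the reflexivity section. Or observe that your decomposition, together with disjointness and strictness of the initial object $(0,0)$, already implies stability.
\end{itemize}
The nullary case, $(0,0)$ being a strict initial object, should also be recorded.
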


\begin{proof}
  The universal properties in $\EX$ are well known, given in for instance \cite[Thm.~29]{carboni-vitale:reg-and-ex}, \cite[A3.3.10, A3.3.13]{johnstone:elephant}.
  For the lextensive/pretopos case, we are unaware of a source stating the universal properties, but they follow straightforwardly from the construction of coproducts/lextensivity in \cite[Lemma~2.2]{carboni:some-free-constructions}, \cite[Thm.~4.1.1]{menni:phd-thesis}.
\end{proof}

\begin{theorem} \label{thm:ex-of-locos-is-au}
  The ex/lex (resp.~ex/reg) completion of a locos (resp.~regular locos) is an arithmetic universe.
  Moreover, these form left bi-adjoints to the forgetful $2$-functors $\AU \to \LOCOS$, $\AU \to \REGLOC$.
\end{theorem}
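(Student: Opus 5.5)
By \cref{prop:ex-completions-universal}, $\exlex{\cE}$ (resp.~$\exreg{\cE}$) is already a pretopos, left bi-adjoint to the forgetful functor $\PRETOP \to \LEXT$ (resp.~$\REGLEXT$). So the plan has two parts. First, show that parametrised list objects exist in the completion and are preserved by the embedding $\cE \to \exlex{\cE}$. Second, show that a pretopos functor out of the completion preserves list objects exactly when its restriction to $\cE$ does. The bi-adjunction restricted to $\AU$ and $\LOCOS$ then follows, since the biequivalence of hom-categories $\PRETOP(\exlex{\cE},\cF) \simeq \LEXT(\cE,\cF)$ restricts to the full sub-hom-categories of list-preserving functors on both sides.

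For the list objects, take an object of $\exlex{\cE}$ given by a pseudo-equivalence relation $R \rightrightarrows A$. Its list object will be $\List A$, equipped with the relation $\List R \rightrightarrows \List A$ induced by functoriality of $\List$ on the two projections. Informally, two lists are related if they have the same length and are related entrywise. Reflexivity, symmetry and transitivity of this relation are obtained by applying $\List$ to the corresponding witnesses for $R$; for transitivity this needs $\List$ to commute with pullbacks, $\List(R \times_A R) \iso \List R \times_{\List A} \List R$, which I would prove by list-recursion in the locos $\cE$. In the ex/reg case, $\List$ also preserves monos, so $\List R \injto \List A \times \List A$ is again an equivalence relation. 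The parameter $X = (S \rightrightarrows B)$ is handled the same way, working with $B \times \List A$.

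The main obstacle is the universal property of this object in $\exlex{\cE}$. Maps there are equivalence classes of $\cE$-maps respecting the relations, and a recursion $X \to Y$, $Y \times A \to Y$ is only given up to the relation on $Y$. To handle this, I would first show that every object of $\exlex{\cE}$ is a quotient of an object of $\cE$ with compatible covers. Then I would prove that $\List$ of a quotient is the quotient of $\List$ (for equivalence relations $R$, the quotient $\List A/\List R$ is a list object for $A/R$). The proof goes by using the universal property of $\List A$ in $\cE$ to produce a representative of the recursion, and checking compatibility with $\List R$ by a second list-recursion producing, for $\ell\,\List R\,\ell'$, a witness that the images are related in $Y$. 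Uniqueness follows by a similar induction.

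If this direct approach becomes messy, I would instead use the characterisation of parametrised list objects as initial algebras in slices, together with stability of this structure under pullback along the covers $A \to A/R$.

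Preservation by $\cE \to \exlex{\cE}$ is then immediate from the construction (for $R$ the diagonal, $\List R$ is the diagonal). For the universal property, a pretopos functor $\bar F$ extending a locos functor $F$ sends $\List A/\List R$ to $F\List A / F\List R \iso \List FA/\List FR$. This is a list object by the quotient lemma, applied now in the AU $\cF$, so $\bar F$ preserves list objects.
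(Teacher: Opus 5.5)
Your proposal is correct and follows essentially the same route as the paper. You build the list object as $\List R \rightrightarrows \List A$, which is a pseudo-equivalence relation because $\List$ commutes with pullbacks, and is monic in the ex/reg case. The unit preserves it trivially. The extension $\bar F$ preserves it because $\List$ commutes with quotients in the AU $\cF$. The bi-adjunction then comes from restricting the hom-equivalences of \cref{prop:ex-completions-universal} to list-preserving functors.

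You actually supply more than the paper, which asserts the universal property of $(\List A,\List R)$ in $\exlex{\cE}$ without checking it. Your representative-level recursion argument for existence and uniqueness does that check correctly. One small step needs tightening. In $\cF$ the relation $FR$ is only a pseudo-equivalence relation, so your quotient lemma (stated for equivalence relations) does not apply directly. You need the routine fact, proved by list induction, that $\List$ preserves covers. Then the image of $\List FR$ in $\List FA\times\List FA$ is $\List$ of the image of $FR$, and the lemma applies.
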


\begin{proof}
  The special case on the locos of predicates of a Skolem theory goes back to \cite[\textsection 2]{joyal-wraith-1979}, and is presented thoroughly in \cite[Prop.~4.10]{maietti:joyals-aus} and analysed further in \cite{maietti-trotta:quotients-etc}. 
  
  Building on \cref{prop:ex-completions-universal}, we just need to show (for the ex/lex case) that if $\cE$ is a locos, then
  $\exlex{\cE}$ has list objects, the unit $\cE \to \exlex{\cE}$ preserves them, and for any locos functor from $\cE$ to an AU $\cF$, its extension to an exact functor $\exlex{\cE} \to \cF$ also preserves them;
  and similarly for the ex/reg case.
 
  Given a pseudo-equivalence relation $R \parpair{}{} A$, $\List R \parpair{}{} \List A$ is also a pseudo-equivalence relation (since $\List$ commutes with pullbacks),
  giving a parametrised list object for $R \parpair{}{} A$ in $\exlex{\cE}$,
  and which moreover is mono (i.e.~lies in $\exreg{\cE}$) in case $R$ was.
  
  The unit functor from $\cE$ clearly preserves list objects.

  Finally, for a locos map $F : \cE \to \cF$ with $\cF$ an AU, checking that the extension $\bar{F} : \exlex{\cE} \to \cF$ preserves these (and $\exreg{\cE} \to \cF$ in case $\cE$, $F$ regular) amounts to checking that $\List$ commutes with quotients of equivalence relations in $\cF$, which is lengthy but routine.
  \comment{Is “lists commute with quotients” in the AU literature??}
  (The special case of natural numbers objects is given in \cite[Rmk.~5.4]{emmenegger-palmgren:exact-completion}, adapting \cite[Cor.~5.2]{birkedal-carboni-rosolini-scott}.)
\end{proof}

In particular, for any arithmetic universe $\cE$, $\exlex{(\Pred \cE)}$ is also an AU, with an AU-logical \defemph{realisation} functor $\exlex{(\Pred \cE)} \to \cE$ extending the realisation of predicates.
(Note that this is \emph{not} generally full or faithful.)
Objects of $\exlex{(\Pred \cE)}$, or their realisations, may be called \defemph{enumerably presented} objects of $\cE$; in particular, the realisations are clearly enumerable.

With hindsight, the locos/AU structures of $\Pred(\cE)$ and $\exlex{(\Pred \cE)}$ are clearly discernible in classical proof theory:
they abstract the constructions that Gödel-numbering uses to build syntax and free algebras entirely within the world of recursive sets and their formal quotients.
Concretely, key consequences of Gödel-numbering now follow directly for AU’s:

\begin{corollary} \label{cor:au-godel-numbering}
  In any AU, each type of the internal free $r$-topos (or more generally, the initial model of any finitely presented EAT) is enumerable.
\end{corollary}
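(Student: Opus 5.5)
We construct the initial model inside the AU $\exlex{(\Pred \cE)}$ and transport it to $\cE$ along the realisation functor. Let $\thT$ be a finitely presented EAT, for instance $\thT_r$ of \cref{prop:r-toposes-as-eats}.

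By \cref{thm:ex-of-locos-is-au}, applied to the locos $\Pred \cE$, the category $\exlex{(\Pred \cE)}$ is an arithmetic universe. The realisation functor $R : \exlex{(\Pred \cE)} \to \cE$ is AU-logical.

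By \cref{thm:initial-models-in-aus}, or \cref{thm:internal-free-r-topos-exists-and-preserved} in the $r$-topos case, $\exlex{(\Pred \cE)}$ has an initial internal model $M$ of $\thT$. The same theorem says AU-logical functors preserve initial models, so $R(M)$ is an initial internal model of $\thT$ in $\cE$.

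Hence the initial model of $\thT$ in $\cE$ is isomorphic to $R(M)$. Each type $\EATinterp[X][R(M)]$ is the realisation $R(\EATinterp[X][M])$, since $R$ is cartesian and a model is a cartesian functor $\syncat{\thT} \to \cE$.

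It remains to see that the realisation of any object of $\exlex{(\Pred \cE)}$ is enumerable. Such an object is a pseudo-equivalence relation $S \parpair{}{} A$ with $A$, $S$ in $\Pred \cE$. The AU-logical functor $R$ preserves the quotient $A \coverto A/S$ in $\exlex{(\Pred \cE)}$, so it sends it to the coequaliser of the realisation of $S \parpair{}{} A$ in $\cE$. This is a regular epi, hence a cover, from the realisation of $A$, which is a complemented subobject of $\NNO$ and so strictly enumerable. Therefore every type of $R(M)$ is enumerable.

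Enumerability is not in general invariant under isomorphism as stated, but a cover from a strictly enumerable object composes with any isomorphism. So the conclusion transfers to whatever chosen initial model $\freeTop[r][\cE]$ is taken.

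The main obstacle is the appeal to preservation of initial models by AU-logical functors. This rests entirely on \cref{thm:initial-models-in-aus}, whose published proof is incomplete. If that preservation were unavailable, the fallback would be to show initiality of $R(M)$ in $\cE$ directly. This means tracing the construction of the initial model through list objects, coproducts and quotients, all of which $R$ preserves.

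One further point to check is that $R$ is genuinely AU-logical. This comes from the bi-adjunction in \cref{thm:ex-of-locos-is-au}, extending the list-preserving inclusion $\Pred \cE \to \cE$, which is locos-logical by \cref{prop:strenum-is-reg-sublocos}.
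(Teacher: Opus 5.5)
Your proposal is correct and follows the paper's proof: realise the initial model of $\exlex{(\Pred \cE)}$ along the AU-logical realisation functor, which preserves initial models by the cited theorem, and note that realisations of objects of $\exlex{(\Pred \cE)}$ are enumerable. Your explicit argument for that last step (the cover from the realisation of $A$, preserved because $R$ is regular) fills in what the paper simply calls ``clearly enumerable''. Your isomorphism-invariance caveat is unnecessary: being covered by an object with a complemented mono into $\NNO$ is manifestly invariant under isomorphism.
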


\begin{proof}
  Since realisation $\exlex{(\Pred \cE)} \to \cE$ is AU-logical, it preserves initial models (\cref{thm:initial-models-in-aus}), and in particular sends $\freeTop[r][\exlex{(\Pred \cE)}]$ to $\freeTop[r][\cE]$.
  %
\end{proof}

We could equivalently have used $\exreg{(\Pred \cE)}$ here: the two are equivalent since in $\Pred \cE$ all objects are projective, so by \cite[Prop.~9(iii)]{carboni-vitale:reg-and-ex}, $\reglex{(\Pred \cE)} \equiv \cE$ and $\exlex{(\Pred \cE)} \equiv \exreg{(\Pred \cE)}$.

\begin{caveat}
  It is very tempting to think that realisations of objects of $\exlex{(\Pred \cE)}$ have enumerable diagonal, and hence that covers between them should split according to \cref{prop:enum-to-strenum-split-image}.

  Indeed, given a pseudo-equivalence relation $R \parpair{}{} A$ in $\Pred \cE$, we know the pullback of the diagonal $\Delta_{A/R}$ to $A \times A$ along the canonical cover $A \coverto A/R$ is precisely $R \to A \times A$ (by exactness), and hence enumerable by \cref{prop:enum-to-enum-diag-enum}.
  However, this does \emph{not} imply that the diagonal $\Delta_{A/R}$ is itself enumerable: enumerability does not in general descend along covers.
\end{caveat}

\section{Reflection results and applications} \label{sec:reflection-results}

We can now give the main reflection result: the standard interpretation of $\IHOL[r]$ internally to a topos, organised via \emph{$r$-universes}, sequences of universes suitably closed under the constructions of $\IHOL[r]$.
Equipped with this, we will then be able to first run the Freyd gluing argument internally to a topos, and then bump it up to our main result, projectivity of $\NNO$ in the free topos.

\subsection{Universes and the standard interpretation} \label{sec:universes}

Reflection arguments rely essentially on the logic in question being able to host the \emph{standard interpretation} for its fragments.
Constructing this typically involves building a sufficiently large universe of sets or predicates to host the interpretation.

In our case, to internalise Freyd gluing for $r$-toposes within a topos $\cE$, we need the standard interpretation of its internal free $r$-topos; that is, an $r$-logical functor $\freeTop[r][\cE] \to \cE$ (of $\cE$-indexed $r$-toposes, identifying $\freeTop[r][\cE]$ with its externalisation and $\cE$ with its self-indexing as usual).
For this, it suffices to construct some \emph{internal sub-$r$-topos} of $\cE$; that is, an internal $r$-topos $\smE'$ whose externalisation is an indexed sub-$r$-topos of $\cE$.
As $\freeTop[r][\cE]$  interprets by initiality into $\smE'$, the composite $ \freeTop[r][\cE] \to \smE' \to \cE$ then yields the desired interpretation.

In this section, we construct such an internal sub-$r$-topos in any topos, by building a \defemph{$r$-ranked universe} — a sequence of universes closed under type constructions analogously to the levels of an $r$-topos.

Precisely, by a \defemph{family} or \defemph{universe} in a category with pullbacks, we simply mean a map, viewed as a family of objects indexed by its base; we will typically say “universe” just when the family is closed under some logical constructions, but will always state such closure assumptions explicitly.
We typically write a family as $q : \Ut \to \Ub$, and call it by metonymy just $\Ub$ \cite{thurber:metonymy}.
\comment{Would it be clearer to make the notation more explicit with, say, $q_\Ub$ for the map?}\comment{H\textcent\textcent: No, just complicates the notation for no gain}

Note that some literature, following \cite[Def~2.1]{streicher:universes-in-toposes}, uses \emph{universe} to mean a class of maps --- essentially what we present as the \emph{indexed subcategory} associated to a universe.
In their terms, our universes are the \emph{(weakly) generic maps} for their universes.
\comment{Possibly add slightly longer pointer to some such literature, e.g. \cite{streicher:universes-in-toposes}, \cite{gratzer-shulman-sterling:strict-univs}}

Fix for the rest of this section an ambient regular category $\cE$ with $\NNO$.
\comment{For much of it, just “cartesian” would suffice. Do we want to be more careful about what’s required where?}

\begin{definition}
  We say a family $q : \Ut \to \Ub$ is \defemph{contained in} another $q' : \Ut' \to \Ub'$ just if $q$ is a pullback of $q'$,
  and call families \defemph{equivalent} if they are contained in each other.
\end{definition}

\comment{An alternative here would be \enquote{there is some cover $e : V \to \Ub$ such that $e^*q$ is a cover of $q$}. Arguably that’s the more natural version in regular categories; but equally arguably, using the present version is just part of our general commitment to \cref{convention:chosen-structure}.}

\begin{proposition} \label{prop:universe-replace-mono}
  Suppose $\cE$ is lextensive.
  If a family $\Ub$ is contained in another $\Ub'$, there is a family $\Ub''$ equivalent to $\Ub'$ for which the containment of $\Ub$ is witnessed by a monomorphism. 
\end{proposition}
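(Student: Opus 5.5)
The idea is to take a coproduct, so that the containment becomes a coprojection, which is monic in a lextensive category. Let $q : \Ut \to \Ub$ be contained in $q' : \Ut' \to \Ub'$, witnessed by a pullback square with bottom map $u : \Ub \to \Ub'$ and top map $\tilde u : \Ut \to \Ut'$. Define
\[ q'' \defeq q + q' : \Ut + \Ut' \to \Ub + \Ub'. \]
The containment of $\Ub$ in $\Ub''$ is then witnessed by the coproduct inclusion $\iota_0 : \Ub \to \Ub + \Ub'$, which is a monomorphism since coproducts in a lextensive category are disjoint.

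The first thing to check is that these are genuine containments, i.e.\ the relevant squares are pullbacks. In a lextensive category, pulling back $q + q'$ along $\iota_0$ gives $q$, and along $\iota_1$ gives $q'$. This is the standard extensivity fact that the squares
\[ A \to A + B \from B \quad\text{over}\quad X \to X + Y \from Y \]
are pullbacks whenever $A \to X$ and $B \to Y$ are given. So $\Ub \subseteq \Ub''$ via the mono $\iota_0$, and $\Ub' \subseteq \Ub''$ via $\iota_1$.

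It remains to show that $\Ub''$ is contained in $\Ub'$. For this I use the copairing $[u, \id] : \Ub + \Ub' \to \Ub'$ on the base and $[\tilde u, \id] : \Ut + \Ut' \to \Ut'$ on the total spaces. To show this square is a pullback, I use extensivity again. Pulling back $q'$ along $[u,\id]$ and then restricting to each summand gives, by pullback pasting, the pullback of $q'$ along $u$, which is $q$, and along $\id$, which is $q'$. Since pullback along a map into a coproduct decomposes into the pullbacks over the summands, the whole pullback is $\Ut + \Ut'$ over $\Ub + \Ub'$, which is $q''$. Together with $\Ub' \subseteq \Ub''$ this makes $\Ub''$ equivalent to $\Ub'$.

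The step needing the most care is this last pullback claim. The cleanest route is the characterisation of lextensivity that $\cE/(X+Y) \simeq \cE/X \times \cE/Y$ via pullback along the coprojections. Under this equivalence, the pullback of $q'$ along $[u,\id]$ corresponds to the pair $(u^*q', q') \cong (q, q')$, which is exactly the object corresponding to $q + q'$. I would invoke that equivalence directly rather than verify the universal property by hand.
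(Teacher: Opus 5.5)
Your proof is correct and uses exactly the paper's construction: the paper just takes $\Ut + \Ut' \to \Ub + \Ub'$ and leaves the verification implicit. Your checks supply the details the paper omits: disjointness makes $\iota_0$ monic, extensivity gives the pullbacks along the coprojections, and the copairing $[u,\id]$ exhibits $q+q'$ as a pullback of $q'$.
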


\begin{proof}
  Take $\Ut'' \to \Ub''$ to be just $\Ut + \Ut' \to \Ub + \Ub'$.
\end{proof}

\begin{definition}
  Let $q : \Ut \to \Ub$ be any family in $\cE$.
  Then we write $\indU$ for the \defemph{full $\cE$-indexed subcategory of $\cE$ on objects from $\Ub$}: that is, the $\cE$-indexed category $\ob \indU_X \coloneq \cE(X,\Ub)$, and maps induced from $\cE$ by the mapping $\indU \to \cE$ sending $f : X \to \Ub$ to $f^*\Ut$, so that the resulting $\cE$-indexed functor $\indU \to \cE$ is full and faithful (considering $\cE$ with its self-indexing throughout, as usual).
  Equivalently, one could define $\indU_X$ as the full subcategory of $\cE/X$ on objects that are contained in $\Ub$, as families.
\end{definition}

\begin{proposition} \label{prop:subuniverse-gives-subcategory}
  For families $\Ub$, $\Ub'$, the family $\Ub$ is contained in (resp.~equivalent to) $\Ub'$ just if $\indU \subseteq \indU'$ (resp.~$\equiv$) as indexed subcategories of $\cE$. \qed
\end{proposition}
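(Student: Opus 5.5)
The plan is to unwind both sides of the statement into the same condition about maps $X \to \Ub'$, using the Yoneda lemma and the universal property of pullbacks. By definition, $\indU \subseteq \indU'$ means that for every $X$ and every $f : X \to \Ub$, the object $f^*\Ut \in \cE/X$ is isomorphic (over $X$) to $g^*\Ut'$ for some $g : X \to \Ub'$. Since the indexed subcategories are full on both sides, only this object-level condition matters. Strictly, the inclusion should also be compatible with reindexing. This is automatic here, because the comparison data is built from pullback squares, which compose.

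For the forward direction, suppose $q$ is a pullback of $q'$ along some $h : \Ub \to \Ub'$. Given $f : X \to \Ub$, pasting pullbacks gives $f^*\Ut \iso f^*h^*\Ut' \iso (hf)^*\Ut'$. So every object of $\indU_X$ lies, up to isomorphism, in $\indU'_X$, naturally in $X$. The choice $f \mapsto hf$ commutes strictly with precomposition, so this gives an indexed inclusion $\indU \to \indU'$ over $\cE$.

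For the converse, suppose $\indU \subseteq \indU'$. Apply the hypothesis to the generic object, namely $X = \Ub$ and $f = \id_\Ub$, whose associated family is $q$ itself. This yields some $h : \Ub \to \Ub'$ together with an isomorphism $\Ut \iso h^*\Ut'$ over $\Ub$, which says precisely that $q$ is a pullback of $q'$, i.e.\ $\Ub$ is contained in $\Ub'$. The equivalence clause then follows by applying the containment case in both directions: equivalence of families means mutual containment, and mutual inclusion of full indexed subcategories is exactly $\indU \equiv \indU'$.

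The only step needing care is the bookkeeping about what \enquote{$\subseteq$} means for indexed subcategories, which are defined only up to isomorphism. I would interpret $\indU \subseteq \indU'$ as: the full inclusion $\indU \to \cE$ factors, up to isomorphism, through $\indU' \to \cE$. Under the equivalent description of $\indU_X$ as the objects of $\cE/X$ contained in $\Ub$, this becomes literal inclusion of full replete subcategories. Once that convention is fixed, both directions are the one-line arguments above. The converse is essentially Yoneda evaluated at the identity.
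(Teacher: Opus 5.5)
Your proposal is correct and takes essentially the paper's approach: the paper states this without proof as immediate from the definitions. Your unwinding is exactly that direct argument: pullback pasting along $h$ for the forward direction, and evaluating the inclusion at the generic object $\id_{\Ub}$ for the converse.
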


\begin{definition}
  Given a family $q : \Ut \to \Ub$ say that $\Ub$ is closed under binary products (resp.~binary sums, subobjects), or contains $1$ (resp.~$\NNO$) if $\indU$, is closed under these as a subcategory of $\cE$.

  More generally, say $q' : \Ut' \to \Ub'$ has power-objects (products, etc.)~for $\Ub$ if $\indU'$ has power-objects (products, etc.)~for $\indU$ as subcategories of $\cE$.
\end{definition}

\comment{We never actually use closure under sums. Take it out, or keep it since it’s perhaps helpful for understanding the general pattern?}

Translating this to more concrete data, we see:
\begin{proposition}
  Given families $q : \Ut \to \Ub$, $q' : \Ut' \to \Ub'$ in $\cE$:
  \begin{enumerate}
  \item $\Ub$ contains $\NNO$ (resp.~$1$) just if there is some global element $e : 1 \to \Ub$ such that $e^*\Ut$ is an NNO (resp.~terminal) in $\cE$;

  \item $\Ub$ is closed under binary products (resp.~sums) just if there is $f : \Ub \times \Ub \to \Ub$ such that $f^*\Ut$ is a product (resp.~sum) for $\pi_0^*\Ut$ and $\pi_1^*\Ut$ in $\cE/\Ub \times \Ub$;

  \item $\Ub$ is closed under subobjects just if for every $f : X \to U$ and mono $m : Y \injto f^*\Ut$, the composite $\pi_0 m : Y \to X$ is a pullback of $q$; or equivalently (if $q$ has a power-object $\pow_{\Ub} \Ut$ in $\cE/\Ub$) just if there is some map $\pow_{\Ub} \Ut \to \Ub$ exhibiting the composite ${\epsilon} \injto \Ut \times_{\Ub} \pow_{\Ub} \Ut \to \pow_{\Ub}$ as a pullback of $q$;

  \item $\Ub'$ has power-objects for $\Ub$ just if there is $f : \Ub \to \Ub'$ such that $f^*\Ut'$ is a power-object for $\Ut$ in $\cE/\Ub$;
  \end{enumerate}
\end{proposition}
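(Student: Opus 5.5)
The whole proposition should reduce to the Yoneda lemma applied to the representable presheaf $X \mapsto \cE(X,\Ub)$, together with the definition of $\indU$. Objects of $\indU_X$ are maps $X \to \Ub$, interpreted in $\cE/X$ as pullbacks of $q$. An indexed subcategory $\indU$ being closed under some construction therefore means: for each $X$ and each tuple of maps $X \to \Ub$, the construction applied to the corresponding pullbacks of $\Ut$ is again (isomorphic to) a pullback of $\Ut$ along some map $X \to \Ub$. For each item, I would prove the ($\Leftarrow$) direction by reindexing the universal instance, and the ($\Rightarrow$) direction by specialising to the generic instance.

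For items (1), (2) and (4), the generic instances are as follows. For constants it is $X = 1$. For binary products and sums it is $X = \Ub\times\Ub$ with the two projections. For power-objects it is $X = \Ub$ with $\id_{\Ub}$.

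For ($\Rightarrow$), closure at the generic instance produces the required map. For instance, closure under products applied to $\pi_0,\pi_1$ gives some $f : \Ub\times\Ub \to \Ub$ with $f^*\Ut$ a product of $\pi_0^*\Ut, \pi_1^*\Ut$ in $\cE/\Ub\times\Ub$. By our Convention, closure already includes a choice of witness, so no choice is needed here.

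For ($\Leftarrow$), given $a,b : X \to \Ub$, I would reindex along $(a,b)$. Then $(f(a,b))^*\Ut \iso (a,b)^*f^*\Ut$, and this is a product of $a^*\Ut$ and $b^*\Ut$, because pullback functors $\cE/(\Ub\times\Ub) \to \cE/X$ preserve products. The same argument works for sums, using lextensivity or stability of sums in the ambient setting. For the NNO, one notes that the pullback of an NNO in $\cE$ along $X \to 1$ is an NNO in $\cE/X$. For power-objects, pullback preserves power-objects in slices, assuming these power-objects are stable, as they are when they exist in the regular sense used here. So $f\circ g$ works for any $g : X \to \Ub$.

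Item (3) is the one needing a little more care. The first formulation is just an unfolding of the definition: closure under subobjects means every mono into an object of $\indU_X$ again lies in $\indU_X$ up to isomorphism, i.e.\ is a pullback of $q$. For the equivalence with the power-object form, the forward direction applies the first form to the generic subobject ${\epsilon} \injto \Ut\times_{\Ub}\pow_{\Ub}\Ut$ over $\pow_{\Ub}\Ut$. For the converse, any mono $m : Y \injto f^*\Ut$ over $X$ is classified by a map $\chi : X \to \pow_{\Ub}\Ut$ over $\Ub$, and $m$ is the pullback of ${\epsilon}$ along $\chi$. Composing the pullbacks then exhibits $Y \to X$ as the pullback of $q$ along $X \to \pow_{\Ub}\Ut \to \Ub$.

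The main obstacle is keeping the bookkeeping straight: identifying the relevant objects up to canonical isomorphism in slices, and checking that the slice power-objects and NNOs are stable under reindexing. The latter is standard for the structures in play, so I expect no real difficulty.
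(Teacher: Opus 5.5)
Your proposal is correct and follows the paper's argument. In each item you read the operation on $\Ub$ off the closure condition at its universal instance (e.g.\ the product of $\pi_0,\pi_1$ in $\indU_{\Ub\times\Ub}$), and conversely get closure at an arbitrary $X$ by reindexing that operation; your treatment of item (3) and of stability under reindexing spells out what the paper leaves implicit. One small point: for the $\NNO$ in item (1) you do not need $X^*\NNO$ to be an NNO of $\cE/X$, which would need a stable NNO (automatic in a topos, but not assumed in a bare regular category), since membership only requires $(e\,!_X)^*\Ut \iso X^*\NNO$.
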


\begin{proof}
  In each case, the given operations on $\Ub$ directly provide categorical closure of the corresponding subcategory.
  Conversely, categorical closure yields operations on $\Ub$ as the universal instance of each closure condition.
  For instance, if $\indU$ is closed under products, then the product of the pair $\pi_0$, $\pi_1$ in $\indU_{\Ub \times \Ub}$ — the \enquote{family of all products of objects from $\Ub$} — is precisely a product operation $\Ub \times \Ub \to \Ub$ as required.
\end{proof}

\begin{definition}
  A \defemph{simple universe} in a regular category with NNO is a family closed under binary products and subobjects, and containing $1$ and $\NNO$.
\end{definition}

\begin{definition}
  An \defemph{$r$-universe} $\Ub_\bullet$ is a simple universe $q : \Ut \to \Ub$, with a sequence of subobjects $\Ub_0 \injto \Ub_1 \injto \Ub_i \injto \ldots \Ub$ (for $i \leq r$), such that considering the $\Ub_i$ as universes by pulling back $q$, each $\Ub_i$ is a simple universe, and $\Ub_{i+1}$ has power-objects for $\Ub_i$.
\end{definition}

\begin{proposition} \label{prop:universe-closed-iff-indexed-closed} \label{prop:simple-universe-reg-subcat}
  For any simple universe $\Ub$, $\indU$ is a regular indexed subcategory of $\cE$ with $\NNO$.
  For any $r$-universe $\Ub_\bullet$, the sequence of subcategories $\indU_i \subseteq \indU$ form an indexed $r$-topos, and the inclusion to $\cE$ is logical.
\end{proposition}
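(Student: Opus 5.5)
The plan is to verify everything fibrewise: over each $X \in \cE$, show that $\indU_X$, the full subcategory of $\cE/X$ on objects contained in $\Ub$, is closed in $\cE/X$ under the relevant constructions. Stability under reindexing is automatic, because these subcategories are defined by a pullback-stable property (being a pullback of $q$), and $\cE$'s own reindexing is regular (respectively logical where power-objects exist). Regularity, the NNO, and power-objects of $\indU$ are then simply those of $\cE/X$, so the inclusion $\indU \to \cE$ is regular, preserves the NNO, and (in the ranked case) preserves the assumed power-objects. That is, it is logical in the paper's sense, not necessarily rank-preserving.

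For the first claim, I would check closure of $\indU_X$ in $\cE/X$ in turn.

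\emph{Terminal object.} The terminal object of $\cE/X$ is $\id_X = !_X^*1$, so it is obtained from $e : 1 \to \Ub$ by pulling back along $X \to 1 \to \Ub$.

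\emph{Binary products.} Given $a, b : X \to \Ub$, their product in $\cE/X$ is $(a,b)^* f^*\Ut$, using the product operation $f$.

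\emph{Pullbacks and equalisers.} These are subobjects of products, so closure under subobjects gives all finite limits.

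\emph{Images.} The image of a map $A \to B$ in $\indU_X$ is a subobject of $B$, so it again lies in $\indU_X$. Since $\indU_X$ is full, covers and monos of $\cE/X$ restrict, and image factorisations computed in $\cE/X$ stay inside $\indU_X$ and remain pullback-stable there.

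\emph{NNO.} The NNO of $\cE/X$ is $X^*\NNO$, which is a pullback of $q$ along $X \to 1 \to \Ub$ via the global element for $\NNO$. The step to watch here is that this uses that the NNO of $\cE$ is stable under pullback to slices. That is standard for regular (indeed cartesian closed) categories; in a general regular category I would instead take the family's NNO to mean a parametrised NNO, which is stable. I expect this to be the only delicate point in the first claim.

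For the ranked claim, each $\Ub_i$ is a simple universe, so by the first part each $\indU_i$ is a regular indexed subcategory containing $\NNO$. Since $\Ub_i \injto \Ub$ and the families are pullbacks of $q$, we get $\indU_i \subseteq \indU_{i+1} \subseteq \indU$ (Prop.~\ref{prop:subuniverse-gives-subcategory}), and each inclusion is regular because all regular structure is computed in $\cE/X$. For power-objects, take $A \in (\indU_i)_X$, say $A = a^*\Ut$ with $a : X \to \Ub_i$. The map $g : \Ub_i \to \Ub_{i+1}$ has $g^*\Ut$ a power-object of $\Ut$ in $\cE/\Ub_i$, so $a^* g^* \Ut$ is a power-object of $A$ in $\cE/X$, provided power-objects are stable under reindexing. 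This stability is the main obstacle: power-objects in slices of a regular category are characterised by a representability property in the slice, and I would check directly that pullback along $a$ preserves this universal property, using that subobjects of $Y \times_X a^*\Ut$ over $X$ are subobjects of an object over $\Ub_i$ via the composite $Y \to X \to \Ub_i$. The resulting power-object lies in $(\indU_{i+1})_X$.

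Finally, reindexing along any $h : Y \to X$ preserves the structure, since it is computed by pullback in $\cE$ and each piece is pullback-stable. This gives the indexed $r$-topos structure, and shows the inclusion to $\cE$ preserves regular structure, NNO, and the assumed power-objects.
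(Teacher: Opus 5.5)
Your proposal is correct and follows the paper's route. The paper's entire proof is the observation that closure under subobjects yields closure under equalisers and images, hence (together with $1$ and binary products) under all the regular structure of $\cE$; the NNO and power-object clauses are read off directly from the definitions of simple universe and $r$-universe, which are already phrased as closure properties of $\indU$. Your extra checks are correct fillings-in of details the paper leaves implicit: pullback-stability of power-objects in slices, and the NNO needing to be stable under slicing (automatic for cartesian closed $\cE$, such as the toposes where this is applied, and otherwise secured by your parametrised-NNO hypothesis).
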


\begin{proof}
  Closure of $\indU$ under subobjects in $\cE$ implies closure under equalisers and images, and hence (together with $\times$ and $1$) closure under all regular structure of $\cE$.
\end{proof}

\begin{proposition}[{Cf.\ \cite[7.1.4(ii),7.3.4(ii)]{jacobs:categorical-logic}}]\label{prop:exp-univ-is-small}
  If a family $\Ut \to \Ub$ is exponentiable, then $\indU$ is $\cE$-small.
\end{proposition}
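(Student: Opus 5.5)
The plan is to write down an explicit internal category $\smU$ in $\cE$ and check that its externalisation $\extcat{\smU}$ is equivalent, as an $\cE$-indexed category, to $\indU$. Put $\ob \smU \defeq \Ub$. For the morphisms we need an object over $\Ub \times \Ub$ representing maps between fibres. Since $q : \Ut \to \Ub$ is exponentiable, pulling back gives $\pi_0^*\Ut$ and $\pi_1^*\Ut$ over $\Ub \times \Ub$, with $\pi_0^*\Ut$ exponentiable (exponentiable maps are pullback-stable). So we can form the exponential $(\pi_1^*\Ut)^{\pi_0^*\Ut}$ in $\cE/(\Ub\times\Ub)$. Its domain is $\mor \smU$, and the structure map to $\Ub \times \Ub$ gives source and target. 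Identities come from transposing the identity $\pi_0^*\Ut \to \pi_0^*\Ut$ over $\Ub$, pulled back along the diagonal. Composition comes from transposing evaluation composed with evaluation over $\Ub\times\Ub\times\Ub$. The category axioms then follow from the corresponding identities for exponentials, as the universal property determines everything. This is the standard construction in the cited \cite[7.1.4(ii)]{jacobs:categorical-logic}.

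Next I would compare $\extcat{\smU}$ with $\indU$. On objects over $X$ both are just $\cE(X,\Ub)$, so the comparison is the identity there. For maps, take $a,b : X \to \Ub$. A map $a \to b$ in $\extcat{\smU}_X$ is a lift of $(a,b) : X \to \Ub\times\Ub$ through $\mor\smU$. That is the same as a global section over $X$ of $(a,b)^*(\pi_1^*\Ut)^{\pi_0^*\Ut}$. Exponentials are stable under pullback (Beck–Chevalley for exponentiable maps), so this object is $(b^*\Ut)^{a^*\Ut}$ in $\cE/X$. Its sections over $X$ correspond to maps $a^*\Ut \to b^*\Ut$ in $\cE/X$, which are exactly the maps of $\indU_X$.

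It remains to check that these bijections respect identities, composition and reindexing along $f : Y \to X$. Each of these reduces to naturality of the exponential adjunction under pullback. The composite is then a strict indexed functor $\extcat{\smU} \to \indU$ which is the identity on objects and bijective on homs, hence an isomorphism. So $\indU$ is $\cE$-small, in fact isomorphic and not merely equivalent to an externalisation.

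The main obstacle is the coherence bookkeeping. The reindexing functors of $\indU$ are given by chosen pullbacks, so they only commute up to canonical isomorphism, and we have to make sure the transposition bijections are compatible with those isomorphisms. I would handle this by noting that every map involved is determined by a universal property, so the Yoneda lemma in each slice forces the naturality squares to commute. A further option is to observe that $\extcat{\smU}$ and $\indU$ are both locally small and to compare the hom-objects of \cref{def:locally-small} directly: for $\indU$ the hom-object of $a,b$ is $(b^*\Ut)^{a^*\Ut}$, which is precisely $(a,b)^*\mor\smU$. The remaining checks are then routine.
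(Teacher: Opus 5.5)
Your proposal is correct and follows the paper's proof. The paper likewise sets $\ob\smU \defeq \Ub$ and takes $\mor\smU$ to be the exponential $(\Ub\times\Ut)^{\Ut\times\Ub} = (\pi_1^*\Ut)^{\pi_0^*\Ut}$ in $\cE/(\Ub\times\Ub)$, and it simply asserts that $\extcat{\smU}\equiv\indU$ is direct to check. Your argument via pullback-stability of exponentials supplies that omitted check, and it in fact gives an isomorphism rather than just an equivalence.
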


\begin{proof}
  The internal avatar $\smU$ (which goes back to \parencite[\textsection 4]{bunge:internal-presheaves}) is defined by taking $\ob \smU \defeq \Ub$, and $\mor \smU$ the exponential $(\Ub \times \Ut)^{\Ut \times \Ub}$ in $\cE/\Ub \times \Ub$.
  It is direct to check that $\smU \equiv \indU$.
\end{proof}

\begin{proposition}
  Suppose $\cE$ is a topos.
  Then any simple universe $\Ub$ in $\cE$ determines a full internal regular subcategory of $\cE$;
  and any $r$-universe $\Ub_\bullet$ determines a full internal sub-$r$-topos $\smU$ of $\cE$, with $\smU_i \equiv \indU_i$ for each $0 \leq i < r$.
  \comment{Check this notation remains clear if we change typefaces further.}
\end{proposition}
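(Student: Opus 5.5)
The plan is to assemble three ingredients already in place: smallness from exponentiability (\cref{prop:exp-univ-is-small}), indexed closure properties from \cref{prop:simple-universe-reg-subcat}, and the passage from indexed to internal structure (\cref{prop:internal-regular-category,prop:internal-r-topos-from-indexed}).

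First, since $\cE$ is a topos it is locally cartesian closed. So $q : \Ut \to \Ub$ is exponentiable, and \cref{prop:exp-univ-is-small} gives an internal category $\smU$ with $\ob\smU = \Ub$ and $\extcat{\smU} \equiv \indU$. The full and faithful indexed inclusion $\indU \to \cE$ transports along this equivalence to a full internal-to-external inclusion $\extcat{\smU} \to \cE$. By \cref{prop:simple-universe-reg-subcat}, $\indU$ is a regular indexed subcategory of $\cE$: its fibres $\indU_X$ are regular, being closed in $\cE/X$ under finite limits and images, and reindexing is regular, being restriction of pullback. Regularity is invariant under indexed equivalence, so $\extcat{\smU}$ is indexed-regular, and \cref{prop:internal-regular-category} equips $\smU$ with internal regular structure. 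This settles the first claim.

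For an $r$-universe $\Ub_\bullet$, I would use the monos $\Ub_i \injto \Ub$ to define an internal ranking on $\smU$, taking $\ob \smU_i \defeq \Ub_i$. This is an essentially-algebraic datum: the object-of-objects of $\smU_i$ is a subobject of $\ob \smU$, and the maps are the full ones. Then $\extcat{\smU_i}$ is identified with $\indU_i$, the full indexed subcategory on objects classified by maps factoring through $\Ub_i$, compatibly with the equivalence $\extcat{\smU} \equiv \indU$. This gives the claimed $\smU_i \equiv \indU_i$.

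By \cref{prop:simple-universe-reg-subcat}, the $\indU_i \subseteq \indU$ form an indexed $r$-topos. Concretely, each $\indU_i$ is regular with $\NNO$ since $\Ub_i$ is simple, and $\Ub_{i+1}$ has power-objects for $\Ub_i$. Hence the externalisation of the internally ranked $\smU$ is an $r$-topos, and \cref{prop:internal-r-topos-from-indexed}\zcref[noname]{item:internal-r-topos-from-ranked} endows $\smU$ with internal $r$-topos structure. Its power-object maps $\Ub_i \to \Ub_{i+1}$ are exactly the universal instances of the closure data. Finally, the inclusion is logical, since the power-objects, $\NNO$ and regular structure were all chosen to be those of $\cE$. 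So $\smU$ is a full internal sub-$r$-topos.

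I expect the main obstacle to be the ranking bookkeeping. One has to check that the identification $\extcat{\smU} \equiv \indU$ restricts to the subcategories, and that weak rank-preservation of reindexing is not an issue. Here it is actually strict: pulling back along $f : Y \to X$ a map $X \to \Ub$ that factors through $\Ub_i$ still factors through $\Ub_i$. If the containments $\Ub_i \to \Ub$ were given only as pullbacks rather than monos, I would first apply \cref{prop:universe-replace-mono}, using that a topos is lextensive. The remaining verifications are routine unwinding of Yoneda.
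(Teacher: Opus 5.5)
Your proposal is correct and follows the paper's proof essentially step for step. The steps are the same: the internal avatar $\smU \equiv \indU$ from exponentiability, internal regular structure from indexed regularity of $\indU$, the ranking from the monos $\Ub_i \injto \Ub$, and internal $r$-topos structure from the indexed $r$-topos $\indU_\bullet$. Two small remarks: your appeal to \cref{prop:internal-r-topos-from-indexed} for the last step is the apt reference (the paper cites \cref{prop:subuniverse-gives-subcategory} there), and the fallback via \cref{prop:universe-replace-mono} is unnecessary, since an $r$-universe is defined with the $\Ub_i$ already subobjects of $\Ub$.
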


\begin{proof}
  In each case, \cref{prop:exp-univ-is-small} gives an internal avatar $\smU \equiv \indU$.

  For $\Ub$ a simple universe, 
  $\indU$ is indexed-regular by \cref{prop:simple-universe-reg-subcat}, so $\smU$ carries internal regular structure by \cref{prop:internal-regular-category}.

  For $\Ub_\bullet$ an $r$-universe, the subobjects $\Ub_i \injto \Ub = \ob \smU$ give an internal ranking on $\smU$.
  The externalisations of the rank subcategories $\smU_i$ are equivalent to the subcategories $\indU_i \subseteq \indU$, so form an indexed $r$-topos by \cref{prop:simple-universe-reg-subcat},
  and thus an internal one by \cref{prop:subuniverse-gives-subcategory}.
\end{proof}

So $r$-universes yield internal sub-$r$-toposes, as desired;
it remains to show such universes exist, in a rich enough ambient category.

\begin{proposition}
  Any family $q : \Ut \to \Ub$ in an arithmetic universe is contained in a family closed under products.
\end{proposition}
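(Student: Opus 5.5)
The natural candidate is the family of all \emph{finite lists} of fibres of $q$, with the product taken over the list. I would take the new base to be the list object $\List \Ub$. Over a list $\ell = [u_1,\dots,u_n]$ the fibre should be $\Ut_{u_1} \times \cdots \times \Ut_{u_n}$.

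Concretely, construct $\Ut' \to \List \Ub$ as the initial algebra (in $\cE/\List\Ub$, or more concretely by list recursion) for the following clauses: the fibre over $[]$ is $1$, and the fibre over $u :: \ell$ is $\Ut_u \times \Ut'_\ell$. One explicit model is the object of pairs $(\ell, s)$ with $\ell \in \List \Ub$ and $s \in \List \Ut$, subject to the condition that $\List(q)(s) = \ell$. Here $\List(q) : \List\Ut \to \List\Ub$ is the functorial action of lists, and the condition is an equaliser. So set $\Ut' \defeq \List \Ut$ and $q' \defeq \List(q) : \List\Ut \to \List\Ub$. Then the fibre of $q'$ over a list $[u_1,\dots,u_n]$ is exactly the set of lists $[t_1,\dots,t_n]$ with $q(t_k) = u_k$, i.e.\ the product of the fibres. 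This uses only the parametrised list objects and finite limits of the AU.

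Next I check the required properties.

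\emph{Containment.} The map $\eta : \Ub \to \List\Ub$, $u \mapsto [u]$, pulls back $q'$ to $q$. The fibre of $\List(q)$ over a singleton is the set of singletons $[t]$ with $q(t)=u$, and this is isomorphic to $\Ut_u$. Rigorously, one shows the square formed by $\eta_\Ut$, $\eta_\Ub$, $q$ and $\List(q)$ is a pullback. Since $\List$ is built from coproducts of powers, $\List A \iso 1 + A + A^2 + \cdots$ by extensivity. So the square is the restriction of $\List(q)$ to the length-$1$ summand, and that restriction is just $q$.

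\emph{Closure under products.} Take the concatenation map $\mathrm{cat} : \List\Ub \times \List\Ub \to \List\Ub$, defined by list recursion. I need $\mathrm{cat}^*\List\Ut \iso \pi_0^*\List\Ut \times_{\List\Ub\times\List\Ub} \pi_1^*\List\Ut$ over $\List\Ub\times\List\Ub$. The comparison map is the restriction of concatenation $\List\Ut \times \List\Ut \to \List\Ut$, which lies over $\mathrm{cat}$ because $\List(q)$ is a monoid homomorphism.

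The expected main obstacle is proving this comparison is an isomorphism internally. The inverse requires splitting a list of $\Ut$ at the length of the first base list. I would define a splitting map $\List\Ub \times \List\Ut \to \List\Ut \times \List\Ut$ (take the first $|\ell_0|$ entries, then the rest) by parametrised list recursion on the first argument. I then verify the two composites are identities by list induction, which is available in any locos via uniqueness of maps out of list objects. Alternatively, I could argue degreewise: using $\List A \iso \coprod_n A^n$ and extensivity, everything decomposes over pairs of lengths $(m,n)$. There the statement reduces to $\Ut^{m} \times \Ut^{n} \iso \Ut^{m+n}$ over $\Ub^m \times \Ub^n$, which is evident. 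Making this "internal coproduct over $n \in \NNO$" rigorous is the delicate part, so I would favour the explicit induction.

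Binary closure plus the nullary case (the empty list gives $1$) then shows $\List(q)$ is closed under products.
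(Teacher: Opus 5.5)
Your proposal is correct and follows the paper's own proof. The paper likewise takes $\List(q) : \List\Ut \to \List\Ub$, gets containment from the singleton lists and the product operation from concatenation, and leaves the internal verification (your splitting map and list induction) as lengthy but routine. One small tightening: $\List A \iso 1 + A + A^2 + \cdots$ needs infinite coproducts that an AU may lack, so for the containment pullback use instead the finite decomposition $\List A \iso 1 + A + A \times A \times \List A$, which follows from Lambek's lemma and distributivity and already suffices.
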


\begin{proof}
  Take the family $\List(q) : \List(\Ut) \to \List(\Ub)$.
  This contains $\Ub$ as the singleton lists, and is closed under products: the fibre of $\List(q)$ over a list from $\Ub$ is the product of the corresponding fibres of $q$, and so concatenation of lists gives a product operation for the fibres.
  Careful verification of this is lengthy but straightforward with the internal type theory, along similar lines to an ordinary proof in $\Set$. \todo{find the closest citation we can; e.g.\ the Maietti result characterising equality in list objects is fairly close}.
\end{proof}

\begin{proposition}
  Any family $q : \Ut \to \Ub$ in a topos is contained in a family $\Ub'$ closed under subobjects, and moreover closed under products if $\Ub$ was.
\end{proposition}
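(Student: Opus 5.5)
The natural candidate is the family of all subobjects of fibres of $q$, indexed by the power-object of $\Ut$ over $\Ub$. Concretely, set $\Ub' \defeq \pow_{\Ub} \Ut$, the power-object of $\Ut$ in the slice $\cE/\Ub$. As an object of $\cE$, this is the pair of some $u \of \Ub$ and a subobject $S \subseteq \Ut_u$. Let $\Ut' \injto \Ut \times_{\Ub} \pow_{\Ub}\Ut$ be the membership relation ${\in}$, viewed as a family over $\pow_{\Ub}\Ut$ via the projection. In the internal language,
\[ \Ut' = \AUinterp[ (u, S, x) \suchthat x \in \Ut_u,\ x \in S ], \]
so the fibre of $q'$ over $(u,S)$ is $S$ itself.

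\textbf{Containment.} The map $\Ub \to \pow_{\Ub}\Ut$, $u \mapsto (u, \Ut_u)$, classifies the top subobject of $\Ut$ in $\cE/\Ub$. Pulling ${\in}$ back along it gives $\Ut$, so $q$ is a pullback of $q'$.

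\textbf{Closure under subobjects.} Take $f : X \to \Ub'$ and a mono $m : Y \injto f^*\Ut'$. Write $f = (u, S)$. Then $f^*\Ut'$ is a subobject of $u^*\Ut$ over $X$, so $Y$ is also a subobject of $u^*\Ut$ in $\cE/X$, since monos compose. This subobject is classified by a map $X \to \pow_{\Ub}\Ut$ lying over $u$, namely $(u, Y)$, and $Y \to X$ is the pullback of $q'$ along that map. So $\indU'$ is closed under subobjects. Alternatively, one can use the power-object criterion in the preceding proposition: pulling ${\in}$ back along itself is again a family of subobjects of fibres of $q$. This uses only that $\cE$ is a topos, so that $\pow_{\Ub}\Ut$ exists.

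\textbf{Closure under products, assuming $\Ub$ has a product operation $\mu : \Ub \times \Ub \to \Ub$.} Given $(u_0,S_0)$ and $(u_1,S_1)$, the product $S_0 \times S_1$ is a subobject of $\Ut_{u_0} \times \Ut_{u_1}$. That object is isomorphic to $\Ut_{\mu(u_0,u_1)}$ via the canonical comparison isomorphism over $\Ub \times \Ub$ between $\mu^*\Ut$ and $\pi_0^*\Ut \times \pi_1^*\Ut$. Transporting $S_0 \times S_1$ along this isomorphism gives a subobject of $\Ut_{\mu(u_0,u_1)}$, and hence a map
\[ \mu' : \Ub' \times \Ub' \to \Ub', \qquad ((u_0,S_0),(u_1,S_1)) \mapsto (\mu(u_0,u_1), \text{image of } S_0 \times S_1). \]
Then $\mu'^*\Ut'$ is a product of $\pi_0^*\Ut'$ and $\pi_1^*\Ut'$ in $\cE/(\Ub' \times \Ub')$, because isomorphic subobjects give isomorphic families. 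For full rigour, this should be done with the universal instance over $\Ub' \times \Ub'$, invoking the subobject-closure criterion and the product criterion of the preceding proposition.

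I expect the main obstacle to be bookkeeping, not mathematics. One has to define $\mu'$ by a formula in the internal language, and check that the comparison isomorphism is uniform, i.e. a single isomorphism over $\Ub \times \Ub$ rather than one chosen fibrewise. Since the product-closure hypothesis is witnessed by an actual isomorphism $\mu^*\Ut \iso \pi_0^*\Ut \times \pi_1^*\Ut$ over $\Ub \times \Ub$, this uniformity holds, and the argument goes through.
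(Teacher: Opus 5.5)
Your proposal is correct and follows the paper's proof: it takes the same family $q' : {\in} \to \pow_{\cE/\Ub}(\Ut)$, gets subobject-closure from the fact that a subobject of a subobject is a subobject, and gets product-closure from the fact that a product of subobjects is a subobject of a product. You spell out the containment (via the top subobject) and the uniformity of the comparison isomorphism $\mu^*\Ut \iso \pi_0^*\Ut \times \pi_1^*\Ut$ more explicitly than the paper, which leaves these checks to the internal language.
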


\begin{proof}
  Take $q'$ to be ${\in} \to \pow_{\cE/\Ub}(\Ut)$, i.e.\ the power-object of $\Ut$ in $\cE/\Ub$.
  This is always closed under subobjects, since a subobject of a subobject is a subobject; and if $\Ub$ was closed under products, then this is too, since a product of subobjects is a subobject of a product.
  Again, careful verification is straightforward in the internal language.
\end{proof}

\todo{The two above constructions, and the following ones, are actually left adjoints, freely closing (w.r.t.\ the \enquote{contains} ordering on families).  That’s very clear for the subobject-closure, but needs a little more thought for the product-closure using lists.  Is it worth saying it?  Probably — at least suppressing details.  Is this in any of the literature on universes, e.g.~\cite{streicher:universes-in-toposes,gratzer-shulman-sterling:strict-univs}? Make sure we credit that literature appropriately.}

\begin{proposition} \label{prop:regular-universe}
  For any family $q : \Ut \to \Ub$ in a topos, there is a simple universe  containing $\Ub$.
\end{proposition}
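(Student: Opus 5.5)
The plan is to build the simple universe in three stages, using the two preceding propositions (closure under products via $\List$, closure under subobjects via slice power-objects) together with \cref{prop:universe-replace-mono} for bookkeeping. A topos with NNO is in particular an arithmetic universe, so the list-based construction applies in $\cE$.

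First, I will adjoin $1$ and $\NNO$. Set $\Ub^{(0)} \defeq \Ub + 1 + 1$, with family $\Ut + 1 + \NNO \to \Ub + 1 + 1$. This contains $\Ub$ via the coproduct inclusion, which is a pullback by extensivity. It contains $1$ and $\NNO$ as the fibres over the two new global points.

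Second, I will close under products. Apply the list construction to $q^{(0)}$ to get $\List(q^{(0)}) : \List(\Ut^{(0)}) \to \List(\Ub^{(0)})$, which contains $\Ub^{(0)}$ as singleton lists and is closed under binary products. It still contains $1$ and $\NNO$, since containment is transitive: a pullback of a pullback is a pullback. It also contains $1$ directly as the fibre over the empty list.

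Third, I will close under subobjects. Take $q' : {\in} \to \pow_{\cE/\List(\Ub^{(0)})}(\List(\Ut^{(0)}))$. By the preceding proposition it is closed under subobjects, and also under products because the input was. It contains the previous family via the singleton-image/maximal-subobject map $\top : \List(\Ub^{(0)}) \to \pow(\ldots)$, picking out the whole fibre, so the pullback of ${\in}$ along it is the original family. Hence $1$ and $\NNO$ remain contained, and so does $\Ub$, by transitivity. Closure under subobjects is what makes closure under products, $1$ and $\NNO$ meaningful up to the indexed subcategory. This is fine, since all notions are phrased via $\indU$ and \cref{prop:subuniverse-gives-subcategory} shows they are invariant under equivalence of families.

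The main obstacle is checking that product-closure survives the power-object step, in the precise sense of exhibiting an operation $\pow(\Ut)\times\pow(\Ut) \to \pow(\Ut)$ whose pulled-back family is the product of the two projections. I would do this in the internal language: given subobjects $S \subseteq \Ut_u$ and $T \subseteq \Ut_v$, send them to $S \times T \subseteq \Ut_u \times \Ut_v \iso \Ut_{u\cdot v}$, using the product operation $\cdot$ on lists and its chosen isomorphism. That the resulting family is the product then follows from the fibrewise description of ${\in}$. The remaining verifications are routine pullback-pasting.
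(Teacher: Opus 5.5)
Your proposal is correct and follows essentially the same route as the paper: adjoin $\NNO$ by a coproduct, close under products via $\List$, then close under subobjects via the slice power-object, checking that product-closure survives this last step. The only difference is that you also adjoin $1$ explicitly, which is redundant (as you note, the empty list already supplies it); the paper adjoins only $\NNO$, via $\Ut + \NNO \to \Ub + 1$.
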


\begin{proof}
  First replace $\Ub$ by $q + {!_\NNO} : \Ut + \NNO \to \Ub + 1$, to include $\NNO$.
  (This can be skipped if $\Ub$ already contains $\NNO$.)
  Then apply the two preceding propositions in turn: add products, and then add subobjects, preserving the product-closure.
\end{proof}

\begin{proposition} \label{prop:universe-with-powerobjects}
  For any family $q : \Ut \to \Ub$ in a topos, there is a simple universe containing $\Ub$ and with power-objects for it.
\end{proposition}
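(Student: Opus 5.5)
The plan is to reduce to \cref{prop:regular-universe} by first enlarging $\Ub$ to a family that already contains power-objects for $\Ub$, and then closing up. Concretely, given $q : \Ut \to \Ub$, let $\pow_{\Ub}\Ut \to \Ub$ be the power-object of $q$ in the slice $\cE/\Ub$ (which exists since $\cE$ is a topos, and slices of toposes are toposes). It comes with its own underlying object $\pow_{\Ub}\Ut$ over $\Ub$, and hence defines a family $p : \pow_{\Ub}\Ut \to \Ub$. Form the sum family
\[ q + p : \Ut + \pow_{\Ub}\Ut \to \Ub + \Ub . \]
By lextensivity of $\cE$, this family contains $q$ (via the first coprojection) and contains $p$ (via the second). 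Then apply \cref{prop:regular-universe} to $q+p$ to obtain a simple universe $\Ub'$ containing it, and hence containing both $q$ and $p$ by transitivity of containment (composition of pullback squares).

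It then remains to check that $\Ub'$ has power-objects for $\Ub$. By the concrete characterisation of that notion, we need some $f : \Ub \to \Ub'$ such that $f^*\Ut'$ is a power-object for $\Ut$ in $\cE/\Ub$. Containment of $p$ in $\Ub'$ provides exactly a map $f : \Ub \to \Ub'$ with $f^*\Ut' \iso \pow_{\Ub}\Ut$ over $\Ub$. Since power-objects are determined only up to isomorphism, $f^*\Ut'$ is then itself a power-object for $\Ut$: transport the membership relation along the isomorphism. By the remark after \cref{def:locally-small}-style reasoning (indexed closure corresponds to a universal instance), this gives power-objects in $\indU'$ for all objects of $\indU$ over every base. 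This uses that power-objects in slices are stable under pullback in a topos, so reindexing the universal instance along any $g : X \to \Ub$ yields a power-object of $g^*\Ut$ in $\cE/X$.

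The main point needing care is this last stability claim, which is what the indexed phrasing of "has power-objects for" demands: one must check that the chosen power-object in $\cE/\Ub$, pulled back along an arbitrary $g$, is a power-object in $\cE/X$. This is the standard fact that pullback functors between slices of a topos are logical, so I expect it to be cited rather than reproved. Everything else is bookkeeping with containment of families.
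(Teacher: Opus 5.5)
Your proof is correct, but it gets containment of $\Ub$ by a different route from the paper. The paper applies \cref{prop:regular-universe} to the power-object family $\pow_{\cE/\Ub}(\Ut) \to \Ub$ alone. Containment of $\Ub$ then comes for free: the singleton map $\Ut \injto \pow_{\cE/\Ub}(\Ut)$ is a mono over $\Ub$, so closure of the resulting simple universe under subobjects forces $\Ut \to \Ub$ to be a pullback of its generic family. You instead put $q$ in by hand. You form $q + p$, use extensivity to see that each summand is a pullback along its coprojection, and then use transitivity of containment.

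Your route is more generic. It only uses that the closure construction of \cref{prop:regular-universe} produces a family containing its input. So it would work unchanged for notions of universe not closed under subobjects. It is also the same trick the paper uses to adjoin $\NNO$ in the proof of \cref{prop:regular-universe}. The paper's route is slightly more economical, since it exploits the subobject-closure already built into simple universes.

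Your last paragraph, on pullback-stability of slice power-objects, is correct but not needed as a separate step. That stability is already built into the paper's concrete characterisation of \enquote{has power-objects for}, which asks only for a single map $f : \Ub \to \Ub'$ with $f^*\Ut'$ a power-object of $\Ut$ in $\cE/\Ub$.
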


\begin{proof}
  Take the slice-wise power-object $\pow_{\cE/\Ub}(\Ut) \to \Ub$; this tautologically has power-objects for $\Ub$.
  Now apply the previous proposition to this family, extending it to a simple universe.
  By construction, this has power-objects for $\Ub$; but it also contains $\Ub$, by closure under subobjects, since any object embeds into its own power-object as singletons.
\end{proof}

\begin{corollary} \label{cor:r-universe-in-topos}
  Any family in a topos is contained in some $r$-universe, for each $r \in \N$.
\end{corollary}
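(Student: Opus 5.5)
We build the $r$-universe by induction on the levels, using \cref{prop:regular-universe,prop:universe-with-powerobjects} repeatedly and then assembling the resulting chain of families into one ambient universe. Fix a family $q : \Ut \to \Ub$.

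First apply \cref{prop:regular-universe} to get a simple universe $\Ub_0$ containing $\Ub$. Then, given a simple universe $\Ub_i$, apply \cref{prop:universe-with-powerobjects} to get a simple universe $\Ub_{i+1}$ containing $\Ub_i$ and having power-objects for it. Iterating for $i < r$ gives a finite chain of simple universes $\Ub_0, \Ub_1, \dots, \Ub_{r}$, each contained in the next, with $\Ub_{i+1}$ having power-objects for $\Ub_i$. Since $r \in \N$, only finitely many steps are needed, so no infinite colimit is required. Take the top family $\Ub_r$ as the ambient simple universe $q_r : \Ut_r \to \Ub_r$.

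It remains to present each $\Ub_i$ as a \emph{subobject} of $\Ub_r$ with $\Ub_i$ the pullback of $q_r$, compatibly along the chain $\Ub_0 \injto \Ub_1 \injto \cdots$. Containment only gives maps $\Ub_i \to \Ub_{i+1}$ along which $q_{i+1}$ pulls back to $q_i$, not monos. So this is where \cref{prop:universe-replace-mono} enters; a topos is lextensive, so it applies. Working downward from the top, replace each family by an equivalent one such that the containments become monic. Concretely, form the new top family
\[ \Ut_0 + \Ut_1 + \cdots + \Ut_r \to \Ub_0 + \Ub_1 + \cdots + \Ub_r ,\]
and take as level-$i$ subobject the summand $\Ub_0 + \cdots + \Ub_i$. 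Each coproduct inclusion is monic, and the pulled-back family over it is $\coprod_{j\le i} q_j$. That family is equivalent to $q_i$: it contains $q_i$ via the summand, and it is contained in $q_i$ via the composite containment maps $\Ub_j \to \Ub_i$ for $j \le i$, copaired.

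Everything we need is invariant under equivalence of families, by \cref{prop:subuniverse-gives-subcategory}: being a simple universe and having power-objects for another family both depend only on the associated indexed subcategory. So the new levels are still simple universes, with level $i+1$ having power-objects for level $i$, and the chain of monos $\Ub'_0 \injto \cdots \injto \Ub'_r$ is an $r$-universe containing the original $\Ub$.

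The main obstacle is this bookkeeping step: turning a chain of containments into genuine nested subobjects of a single base while keeping every property intact. The key checks are that equivalence preserves the closure properties, which holds by \cref{prop:subuniverse-gives-subcategory}, and that the containment of $\Ub$ in the final universe survives the replacement, which holds since $\Ub$ sits in the $\Ub_0$ summand. The remaining verifications are routine.
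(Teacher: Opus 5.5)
Your proof is correct and follows the paper's route: iterate \cref{prop:universe-with-powerobjects}, and use \cref{prop:universe-replace-mono} to make the containments monic. Your single nested coproduct $\Ub_0 + \cdots + \Ub_i \injto \Ub_0 + \cdots + \Ub_r$ is exactly what iterating that proposition produces, and your initial application of \cref{prop:regular-universe} (so that level $0$ is itself a simple universe) and explicit invariance-under-equivalence check usefully spell out details the paper's one-line proof leaves implicit.
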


\begin{proof}
  Starting from the given family $\Ub_0$, we repeatedly apply \cref{prop:universe-with-powerobjects}, together with \cref{prop:universe-replace-mono} to keep the inclusions mono, yielding a sequence of simple universes $\Ub_0 \injto \Ub_1 \injto \cdots$, each with power-objects for the previous.
  Cutting off at stage $r$ evidently forms an $r$-universe.
\end{proof}

\begin{corollary} \label{cor:internal-sub-r-topos}
  Any family in a topos is contained in some internal sub-$r$-topos, for each $r \in \N$. \qed
\end{corollary}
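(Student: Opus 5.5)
The plan is to combine \cref{cor:r-universe-in-topos} with the proposition just before it, which turns an $r$-universe into a full internal sub-$r$-topos. Fix a topos $\cE$, a family $q : \Ut \to \Ub$, and $r \in \N$.

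First, I will apply \cref{cor:r-universe-in-topos} to obtain an $r$-universe $\Ub'_\bullet$ with $\Ub$ contained in $\Ub'_0$. This uses \cref{prop:universe-replace-mono} and \cref{prop:universe-with-powerobjects}, so the rank inclusions $\Ub'_i \injto \Ub'_{i+1} \injto \Ub'$ are monic. Since containment of families is transitive (a pullback of a pullback is a pullback), $\Ub$ is also contained in the total family $\Ub'$, and in each $\Ub'_i$.

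Second, the preceding proposition applies because $\cE$ is a topos, so every family is exponentiable and \cref{prop:exp-univ-is-small} gives an internal avatar. It produces a full internal sub-$r$-topos $\smU'$ with $\ob \smU' = \Ub'$ and $\smU'_i \equiv \indU'_i$. By \cref{prop:subuniverse-gives-subcategory}, containment of $\Ub$ in $\Ub'_0$ gives $\indU \subseteq \indU'_0 \subseteq \indU'$ as indexed subcategories of $\cE$. So every object of $\indU$ is, up to the equivalence $\smU' \equiv \indU'$, an object of the internal sub-$r$-topos, and indeed of rank $0$. This is what "contained in" should mean for an internal sub-$r$-topos: the universal family of $\smU'$, namely $\Ut' \to \Ub'$, contains $q$.

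There is essentially no obstacle, since this is a direct two-step corollary. The only care needed is in reading the statement. The family should be contained in the family underlying $\smU'$, meaning $q$ is a pullback of $\Ut' \to \Ub'$, and this holds by the choice made in the first step. If one also wants the containment witnessed by a mono $\Ub \injto \ob \smU'$, a final application of \cref{prop:universe-replace-mono} arranges it; for this one checks that the modified family $\Ut + \Ut' \to \Ub + \Ub'$ is still an $r$-universe. That holds because it is equivalent to $\Ub'$, and by \cref{prop:subuniverse-gives-subcategory} it therefore gives the same indexed subcategories.
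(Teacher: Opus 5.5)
Your proposal is correct and is exactly the argument the paper intends (hence its bare \qed): \cref{cor:r-universe-in-topos} gives an $r$-universe containing the family, and the preceding proposition turns that $r$-universe into a full internal sub-$r$-topos $\smU$ with $\smU_i \equiv \indU_i$. Your closing remark about arranging a mono witness is optional. It works provided you take the rank subobjects of $\Ub + \Ub'$ to be $\Ub + \Ub'_i$, so that each rank, and not just the total family, is equivalent to the corresponding $\Ub'_i$.
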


\begin{proposition} \label{prop:standard-interpretation-internal-r-topos}
  Let $\cE$ be any topos.
  Then for each $r \in \N$, there is an essentially unique $r$-logical functor of $\cE$-indexed $r$-toposes $\HOLinterp : \freeTop[r][\cE] \to \cE$ (giving $\cE$ its self-indexing and full ranking), which we call the \defemph{standard interpretation} of $\freeTop[r][\cE]$.
\end{proposition}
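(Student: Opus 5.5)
Existence comes from an $r$-universe, uniqueness from an iso-comma argument, as for \cref{prop:free-r-topos-is-bi-initial}. Throughout, identify the internal $r$-topos $\freeTop[r][\cE]$ with its externalisation; by \cref{prop:internal-r-topos-from-indexed} this is an $\cE$-indexed $r$-topos. Since $\cE$ is a topos, it has parametrised list objects and is an arithmetic universe, so \cref{thm:internal-free-r-topos-exists-and-preserved} applies. Give $\cE$ its self-indexing and full ranking; then $\cE$ is an indexed $\infty$-ranked topos, and in particular an $r$-topos.

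\emph{Existence.} Apply \cref{cor:internal-sub-r-topos} to any family, say $1 \to 1$. This gives an internal sub-$r$-topos $\smU$ of $\cE$, built from an $r$-universe $\Ub_\bullet$, with $\smU_i \equiv \indU_i$. By initiality of $\freeTop[r][\cE]$ in $\strLog[r][\cE]$ there is a strict internal $r$-logical functor $\freeTop[r][\cE] \to \smU$. Externalising gives an $r$-logical indexed functor, by \cref{prop:internal-r-topos-from-indexed}\zcref[noname]{item:indexed-r-topos-from-internal}. Compose this with the full inclusion $\smU \equiv \indU \to \cE$. That inclusion is logical by \cref{prop:simple-universe-reg-subcat}, and it is rank-preserving because every object has rank $\leq i$ in the full ranking. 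So the composite is the required $r$-logical $\HOLinterp$.

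\emph{Essential uniqueness.} Let $F_0, F_1 : \freeTop[r][\cE] \to \cE$ be two $r$-logical indexed functors. Form the indexed iso-comma $\isocomma{F_1}{F_0}$ fibrewise, as in \cref{prop:iso-comma-r-topos}; it is an indexed $r$-topos with projections to two copies of $\freeTop[r][\cE]$. As in \cref{prop:free-topos-is-bi-initial}, a natural isomorphism $F_0 \iso F_1$ is the same as an $r$-logical section $\freeTop[r][\cE] \to \isocomma{F_1}{F_0}$ over the diagonal, and we need such a section to exist uniquely. The problem is that internal initiality only gives maps into \emph{internal} $r$-toposes, and the iso-comma over the self-indexing $\cE$ is large. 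To fix this, first factor both $F_0$ and $F_1$ through a common small internal sub-$r$-topos.

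The hom-objects $\indhom[\cE]{\cE}{X}{-}{-}$ make $F_0$ and $F_1$ determined by their values on the generic object over $\ob \freeTop[r][\cE]$. Let $\Ub$ be the sum of the two resulting families $F_0(\mathrm{gen}), F_1(\mathrm{gen})$ over $\ob\freeTop[r][\cE]$, and enlarge it by \cref{cor:internal-sub-r-topos} to an internal sub-$r$-topos $\smU$. Since $\indU \to \cE$ is full and faithful, both $F_j$ factor through $\smU$ up to isomorphism, and natural isomorphisms between the $F_j$ are the same as those between the factored functors. Now the iso-comma $\isocomma{F_1}{F_0}$ over $\smU$ is an internal $r$-topos; this needs the internal analogue of \cref{prop:iso-comma-r-topos}, which holds by the same essential-algebraicity check as in \cref{prop:internal-artin-gluing}. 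Strict initiality of $\freeTop[r][\cE]$ then gives a unique section over the diagonal, hence a unique natural isomorphism $F_0 \iso F_1$.

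The main obstacle is this uniqueness step. One must show that the factorisation through $\smU$ is coherent, since an indexed functor only commutes with reindexing up to isomorphism, and a strict internal functor into $\isocomma{F_1}{F_0}$ must be extracted from it via \cref{prop:externalisation-2-fully-faithful}. I would handle this by replacing the $F_j$ with isomorphic strictly-reindexing functors landing in $\indU$, using that $\indU_X = \cE(X,\Ub)$ strictly.
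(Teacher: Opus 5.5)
Your existence argument is the paper's. For uniqueness you use the same argument as well: an iso-comma plus strict initiality. The only difference is how you deal with size, and here your stated reason for the detour is mistaken.

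You assert that the iso-comma over the self-indexing of $\cE$ is large, and so you first factor $F_0$, $F_1$ through a second internal sub-$r$-topos $\smU$. In fact $\isocomma{F_1}{F_0}$ is already $\cE$-small, and that is the paper's entire extra observation. An object over $X$ consists of:
\begin{itemize}
\item a pair $a_1, a_0 : X \to \ob \freeTop[r][\cE]$;
\item an isomorphism $F_1 a_1 \iso F_0 a_0$.
\end{itemize}
By local cartesian closure of $\cE$, such data are classified by the object of isomorphisms between $\pi_1^* F_1(\mathrm{gen})$ and $\pi_0^* F_0(\mathrm{gen})$ over $(\ob \freeTop[r][\cE])^2$. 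No unconstrained object of $\cE$ enters. Genuine largeness arises only for a comma such as $\comma{\cE}{\Gamma}$ in \cref{thm:freyd-gluing-internal-r-topos}, where one side ranges over all of $\cE$. There the paper does factor through a universe, just as you do here.

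Your detour is nonetheless sound. The smallness of $\smU$ comes from \cref{prop:exp-univ-is-small}, which is the same cartesian closure used indirectly. Working with internal categories throughout also makes the iso-comma, and the strictness of its projections, a routine essential-algebraicity check.

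The cost is the extra universe and the reindexing-coherence bookkeeping you yourself flag. The paper's route avoids both. It needs only that an iso-comma whose two sides are internal, over a locally $\cE$-small vertex, is itself $\cE$-small. By \cref{prop:internal-r-topos-from-indexed} it is then an internal $r$-topos to which initiality applies.
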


\begin{proof}
  \Cref{cor:internal-sub-r-topos} gives an internal sub-$r$-topos $\smE \to \cE$.
  Composing this with the $r$-logical functor $\freeTop[r][\cE] \to \smE$ supplied by initiality yields an $r$-logical functor $\HOLinterp : \freeTop[r][\cE] \to \cE$ as desired.

  Essential uniqueness follows as in \cref{prop:free-topos-is-bi-initial,prop:free-r-topos-is-bi-initial}, requiring just the additional observation that for $r$-logical $F_0, F_1 : \freeTop[r][\cE] \to \cE$, the iso-comma $\isocomma{F_0}{F_1}$ is $\cE$-small, by cartesian closure of $\cE$.
  \comment{If referee asks for more here: Give indexed iso-comma $r$-topos above, and note it’s internal if the sides are internal and the vertex locally small.}
\end{proof}

\comment{Question: in weaker setting than a topos — e.g. a general AU, or $r$-topos with AU ranks — can the/a standard interpretation exist without being essentially unique?}

\begin{remark}
  As expected for reflection principles, \cref{prop:standard-interpretation-internal-r-topos} cannot be extended to the internal free \emph{topos}: Gödel incompleteness implies that there is a non-degenerate topos $\cG$ whose internal free topos $\freeTop[][\cG]$ is degenerate, and hence cannot admit any logical functor to $\cG$.
\end{remark}

\begin{remark} \label{rmk:universes-for-aus}
  If the ranks of an $r$-topos are assumed not just regular but in fact arithmetic universes, as suggested in \cref{rmk:what-if-ranks-aus}, then the universes of this section need to be closed under sums and list objects.
  This complicates the closure of a family $\Ut \to \Ub$ under all required operations (\cref{prop:regular-universe}): the closure is no longer simply \enquote{subobjects of formal products from the family}, easily indexed by $\pow(\List(\Ub))$, but must interleave formal \enquote{list object} and \enquote{sum} operations with the products, and hence be indexed by something like the power-object of a suitable $W$-type.
  This can certainly still be done, but requires substantially more work, especially to describe the resulting total space.
  
  On the flip side, with our current approach of regular ranks, the ranks of the universes constructed are unclear.
  Assuming ranks are AU’s should mean that the closure/successor constructions of \cref{prop:regular-universe,prop:universe-with-powerobjects} only increase rank by one or two, and hence allow the standard interpretation of an $r$-topos in a $(2r+1)$- or even $(r+1)$-topos.
  \comment{Would be good to check the bound here, and make our claim less equivocal if possible.}
\end{remark}

\subsection{Projectivity theorems} \label{sec:projectivity-thms}

With the internal standard interpretation of $\IHOL[r]$ in $\IHOL$ assembled, we are now ready to draw the desired applications to projectivity principles.

\begin{theorem}[Internal Freyd gluing]
  \label{thm:freyd-gluing-internal-r-topos}
  In any topos $\cE$, the terminal object of the internal free $r$-topos $\freeTop[r][\cE]$ is (indexed-) projective.
\end{theorem}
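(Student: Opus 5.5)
We will run the proof of \cref{prop:freyd-gluing-r-topos} internally to $\cE$, replacing $\Set$ with $\cE$ itself (self-indexed, fully ranked) and the global sections functor with the indexed representable $\indyon{1} : \freeTop[r][\cE] \to \cE$ determined by the terminal object. Concretely, write $\smF \defeq \freeTop[r][\cE]$ and identify it with its externalisation. Since $\smF$ is $\cE$-small, it is locally small, so \cref{prop:indexed-hom-functor} gives $\Gamma \defeq \indyon{1} : \smF \to \cE$, sending $A \in \smF_X$ to $\indhom[\cE]{\smF}{X}{1}{A} \to X$. This is a right adjoint-like hom functor, so it is cartesian, and it preserves rank trivially because $\cE$ carries the full ranking. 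By \cref{prop:indexed-proj-equivalent-forms}\zcref[noname]{item:ind-proj-representable}, the goal is exactly that $\Gamma$ preserves covers.

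We form the indexed Artin gluing $\comma{\cE}{\Gamma}$, which is an indexed $r$-topos by \cref{prop:indexed-artin-gluing}. Initiality then yields an interpretation $\HOLinterp : \smF \to \comma{\cE}{\Gamma}$ that is a section of $P_0$. For this step we need initiality among \emph{indexed} $r$-toposes, whereas $\smF$ is only initial among internal ones. We therefore want the gluing to be internal. \Cref{prop:internal-artin-gluing} needs both sides internal, so we replace $\cE$ by an internal sub-$r$-topos $\smE$ large enough to contain the family $\indhom{}{}{}{1}{-}$ over $\ob\smF$: the universal hom family $\mathrm{Hom}(1,A) \to \ob\smF$ is a family in $\cE$, and \cref{cor:internal-sub-r-topos} provides such an $\smE$. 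We then check that $\Gamma$ factors through $\smE$ as an internal cartesian functor. Then $\comma{\smE}{\Gamma}$ is an internal $r$-topos and $P_0$ is strict, so initiality of $\smF$ in $\strLog[r][\cE]$ gives a strict section $\HOLinterp$ with $P_0\HOLinterp = \id$. We write $\HOLinterp_1 \defeq P_1\HOLinterp : \smF \to \smE \subseteq \cE$ and $\rho : \HOLinterp_1 \to \Gamma$ for the induced natural transformation.

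Now let $e : A \coverto 1$ be a cover in $\smF_X$; it suffices to treat covers of $1$, working over arbitrary $X$ by reindexing and using \cref{prop:indexed-proj-equivalent-forms}\zcref[noname]{item:ind-proj-covers}. Its image $\HOLinterp[e]$ is a cover in the glued category, since $\HOLinterp$ is regular. By \cref{prop:commas}\zcref[noname]{item:comma-regular} the first component $\HOLinterp[e]_1 : \HOLinterp[A]_1 \to \HOLinterp[1]_1 \iso 1_X$ is a cover in $\cE/X$. Here we need the converse direction of that proposition, namely that covers in the gluing have cover components; this holds because $P_1$ is regular. So $\HOLinterp[A]_1 \coverto X$ is a cover $g$, and $\rho_A$ composed with the generic element gives a map $Z \defeq \HOLinterp[A]_1 \to \Gamma(A) = \indhom{}{}{}{1}{A}$ over $X$. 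In other words, it gives a global section of $g^*A$ in $\smF_Z$, which is the required splitting of $g^*e$.

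The main obstacle is the internalisation in the second paragraph. $\Gamma$ lands in $\cE$, not in an internal category, so initiality does not directly apply to $\comma{\cE}{\Gamma}$. The plan is to use the universe from \cref{cor:internal-sub-r-topos} so that the gluing becomes genuinely internal, and to verify that the ranking and power-object pullbacks from \cref{prop:artin-gluing-r-topos} stay within $\smE$. Because $\smE$ is closed under power-objects only up to rank $r$, we may need to take $\smE$ sufficiently ranked, for instance by building it to contain the hom families at every rank.
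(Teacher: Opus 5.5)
Your proposal is correct and follows essentially the paper's own proof. The paper likewise factors the global-sections functor through an internal sub-$r$-topos $\smE'$ obtained from \cref{cor:internal-sub-r-topos} containing the hom family, glues internally so that initiality yields a section, and reads off a global section of $A$ after reindexing along the cover $\HOLinterp[A]_1 \coverto X$ via $\rho_A$.

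Your closing worry about ranks is resolved automatically. The construction of the $r$-universe places the given family in the bottom level $\Ub_0$, so $\Gamma'$ lands in rank $0$ and is therefore rank-preserving by cumulativity of the ranks.
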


\begin{proof}
  We adapt the proof of \cref{prop:freyd-gluing-topos} once again, with just a little extra work this time for the internalisation.
  
  By \cref{cor:internal-sub-r-topos}, $\cE$ has some internal full sub-$r$-topos $\smE'$ including the family $\mor \freeTop[r][\cE] \to (\ob \freeTop[r][\cE])^2$ of hom-sets of $\freeTop[r][\cE]$, and hence such that the global sections functor $\Gamma : \freeTop[r][\cE] \to \cE$ factors as $\Gamma' : \freeTop[r][\cE] \to \smE' \injto \cE$.

  Now by \cref{prop:internal-artin-gluing}, the comma category $\comma{\smE'}{\Gamma'}$ is an internal $r$-topos, and its first projection functor $\freeTop[r][\cE]$ is strictly $r$-logical;
  so it admits an interpretation functor $\freeTop[r][\cE]$.
  Composing with the ($r$-logical, strictly over $\freeTop[r][\cE]$) inclusion $\comma{\smE'}{\Gamma'} \to \comma{\cE}{\Gamma}$ gives an $r$-logical strict splitting of the projection $\comma{\cE}{\Gamma} \to \freeTop[r][\cE]$:
  \[ \begin{tikzcd}[column sep={5em,between origins}]
    & \comma{\smE'}{\Gamma'} \ar[d,"P_0"] \ar[r,inj]
    & \comma{\cE}{\Gamma} \ar[d,"P_0"] \ar[r,"P_1"]
    & \cE \ar[d,"\id"] \ar[dl,Rightarrow,shorten=6mm]
    \\ \freeTop[r][\cE]
          \ar[ur,"\HOLinterp",dashed]
          \ar[r,"\id"]
     & \freeTop[r][\cE] \ar[r,"\id"]
     & \freeTop[r][\cE] \ar[r,"\Gamma"]
     & \cE
    \end{tikzcd}
  \]
  
  Now for any $X \in \cE$ and cover $A \coverto 1$ in $(\freeTop[r][\cE])_X$ (i.e.~map $A : X \to \EATinterp[ x \of {\ob} \suchthat x \coverto 1][\freeTop[r][\cE]])$, its interpretation in $\comma{\cE}{\Gamma}$ amounts (by 
  \cref{prop:commas}\zcref[noname]{item:comma-regular}) to data in $(\freeTop[r][\cE])_X$ and $\cE/X$ of the form
  \[ \begin{tikzcd}
    A \ar[d,->>] & & \HOLinterp[A]_1 \ar[r] \ar[d,->>] & \Gamma(A) \ar[d]
  \\ 1 & & \mathllap{1 = {}}\HOLinterp[1]_1 \ar[r] & \Gamma(1) \mathrlap{{}=1}
    \end{tikzcd}
  \]
  In $\cE$ itself, this amounts to:
  \[ \begin{tikzcd}
    & \EATinterp[ x \of {\ob} \suchthat x \coverto 1][\freeTop[r][\cE]] \ar[d]
    & \HOLinterp[A]_1 \ar[r] \ar[d,->>] & A^*\EATinterp[ x, s \suchthat s : 1 \to x][\freeTop[r][\cE]] \ar[d]
  \\ X \ar[ur,"A"] \ar[r,"1"] & \ob \freeTop[r][\cE]
    & X \ar[r,"\id"] & X
    \end{tikzcd}
  \]
   This exhibits a global section of $A$ after reindexing along the cover $\HOLinterp[A]_1 \coverto X$, just as required for projectivity by \cref{prop:indexed-proj-equivalent-forms}.
\end{proof}

\comment{Originally felt somehow like this should be a translation of the proof of \cref{prop:freyd-gluing-r-topos} under the stack semantics of \cite{shulman:stack-semantics-and-comparison}; but now hard to see what can be said about this. Probably ignore.}

We need just one last ingredient to give the main result:

\begin{definition}
  Let $\cE$ be an arithmetic universe.
  By the \defemph{internalisation of syntax to $\cE$}, we mean the unique $\cE$-indexed $r$-logical functor $\mathquote{-} : \freeTop[r] \to \freeTop[r][\cE]$ (provided by \cref{prop:free-r-topos-indexed-initial}).
\end{definition}

\begin{proposition}
  \label{prop:extend-interp-to-internal-free}
  Let $\cE$ be a topos.  Then internalisation of syntax to $\cE$, followed by the internal standard interpretation, yields the interpretation of actual syntax into $\cE$.
  That is, the following triangle of $\cE$-indexed $r$-logical functors commutes up to canonical isomorphism:

  \[ \begin{tikzcd}[row sep=small]
    \freeTop[r] \ar[dr,"\mathquote{-}"'] \ar[drrr,bend left=20,"\HOLinterp"{description,pos=0.45}] & & \\
    \ & \freeTop[r][\cE] \ar[rr,"\HOLinterp"{pos=0.45}] & & \cE
  \end{tikzcd} \]
\end{proposition}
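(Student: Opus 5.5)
The plan is to deduce the statement purely formally from the (bi-)initiality of $\freeTop[r]$ with its constant $\cE$-indexing among $\cE$-indexed $r$-toposes, \cref{prop:free-r-topos-indexed-initial}. Both sides of the triangle are $\cE$-indexed $r$-logical functors from $\freeTop[r]$ (constantly indexed) to $\cE$ with its self-indexing and full ranking. The composite $\HOLinterp \circ \mathquote{-}$ is $r$-logical because each factor is: $\mathquote{-}$ by definition, and the internal standard interpretation by \cref{prop:standard-interpretation-internal-r-topos}. The direct interpretation $\HOLinterp : \freeTop[r] \to \cE$ is the constant-indexed extension of the ordinary $r$-logical functor $\freeTop[r] \to \cE^\full$. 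This ordinary functor comes from initiality in $\strLog[r]$ applied to a small strict sub-$r$-topos of $\cE$, as in the proof of \cref{prop:free-topos-is-bi-initial}. Its extension to each fibre $\cE/X$ is obtained by composing with $X^* : \cE \to \cE/X$, which is logical, hence $r$-logical for the full rankings.

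The key step is essential uniqueness of indexed $r$-logical functors $\freeTop[r] \to \cE$: given two such, $F_0$ and $F_1$, I want a unique natural isomorphism between them. I would repeat the iso-comma argument of \cref{prop:free-topos-is-bi-initial,prop:free-r-topos-is-bi-initial} at the indexed level. First form the indexed iso-comma $\isocomma{F_1}{F_0}$ fibrewise, using \cref{prop:iso-comma-r-topos} in each fibre $\cE/X$. Reindexing acts componentwise, so this is again an indexed $r$-topos with strictly $r$-logical projections. Indexed $r$-logical sections of its projection to $\freeTop[r]\times\freeTop[r]$ correspond to indexed natural isomorphisms $F_1 \iso F_0$. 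By the initiality in \cref{prop:free-r-topos-indexed-initial} such a section exists and is unique.

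I expect the main obstacle to be the bookkeeping in this indexed step. The point to check is that \cref{prop:free-r-topos-indexed-initial} really gives \emph{strict} initiality against an indexed $r$-topos that is presented as a pseudo-functor, with reindexing only weakly $r$-logical. If that is awkward, I would fall back on a more concrete argument, since the domain is constantly indexed. An indexed functor out of the constant indexing is determined, up to canonical isomorphism, by its fibre at $1$ together with the coherence isomorphisms $F_X \iso X^* F_1$. So it suffices to compare the two functors at $1$, which reduces the question to ordinary bi-initiality (\cref{prop:free-r-topos-is-bi-initial}) of $\freeTop[r]$ in $\LOG[r]$. This uses that $(\HOLinterp\circ\mathquote{-})_1$ is an ordinary $r$-logical functor $\freeTop[r]\to\cE^\full$.

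In either route, the isomorphism obtained is the canonical one, namely the unique one, which gives the "canonical" in the statement.
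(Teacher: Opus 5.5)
Your proposal is correct and follows the paper's proof, which is just \enquote{immediate by bi-initiality of $\freeTop[r]$}: both legs of the triangle are $\cE$-indexed $r$-logical functors out of the constantly indexed $\freeTop[r]$, so they agree up to a unique, hence canonical, isomorphism. You need not redo the indexed iso-comma argument or worry about strictness, since \cref{prop:free-r-topos-indexed-initial} already asserts bi-initiality; your fallback, which reduces to the fibre over $1$ and applies \cref{prop:free-r-topos-is-bi-initial}, is also a valid way to see this directly.
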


\begin{proof}
  Immediate by bi-initiality of $\freeTop[r]$.
\end{proof}

\begin{theorem} \label{thm:n-is-projective-in-f}
  The natural numbers object of the free topos $\freeTop$ is projective.
\end{theorem}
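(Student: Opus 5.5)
By \cref{prop:projective-equivalent-split} it suffices to show that every cover $e : A \coverto \NNO$ in $\freeTop$ splits. I would work with $\freeTop[\infty]$ in place of $\freeTop$ (\cref{prop:free-ranked-topos-equiv-free-topos}), since the equivalence transports both covers and splittings.

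\emph{Step 1: compactness.} By \cref{prop:free-inf-top-filtered-colim} and \cref{cor:covers-in-filtered-colim}, the cover $e$ is, up to isomorphism, the image of some cover $e_r : A_r \coverto \NNO$ in $\freeTop[r]$ for some finite $r$. It therefore suffices to split $e$ after applying the logical functor $F : \freeTop[r] \to \freeTop[\infty] \equiv \freeTop$. The splitting need only exist in $\freeTop$; it need not exist in $\freeTop[r]$.

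\emph{Step 2: internalise and split there.} Put $\cE = \freeTop$, which is a topos and hence an AU. Apply the internalisation of syntax $\mathquote{-} : \freeTop[r] \to \freeTop[r][\cE]$ to $e_r$. This gives a cover $\mathquote{e_r} : \mathquote{A_r} \coverto \NNO$ in the fibre $(\freeTop[r][\cE])_1$. The internal NNO is the constant $\NNO$, because $\mathquote{-}$ is $r$-logical. Next reindex along the map $\NNO_\cE \to \ob \freeTop[r][\cE]$ sending $n$ to the $n$th global element of the internal NNO, i.e.\ to the numeral $\bar n : 1 \to \NNO$ built by iterating the successor in $\freeTop[r][\cE]$ via the NNO of $\cE$. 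Pulling $\mathquote{A_r}$ back along this numeral gives a cover of $1$ in $(\freeTop[r][\cE])_{\NNO}$. By \cref{thm:freyd-gluing-internal-r-topos}, $1$ is indexed-projective, so by \cref{prop:indexed-proj-equivalent-forms} there is a cover $g : Z \coverto \NNO$ in $\cE$ over which this fibre has a global section.

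The main obstacle is removing $g$: we need a section defined over all of $\NNO$, not just over $Z$. Here I would use Gödel numbering. By \cref{cor:au-godel-numbering}, the relevant type of $\freeTop[r][\cE]$, namely the object of sections of the fibre, parametrised over $\NNO$, is enumerable. So the map from it to $\NNO$ is a map from an enumerable object to a strictly enumerable one. Since the composite is a cover, it splits by \cref{prop:enum-to-strenum-split-image}, by choosing minimal witnesses. To make this rigorous I would replace $Z$ by the object of pairs $(n,s)$ with $s$ a global section of $\bar n^*\mathquote{A_r}$; this is a definable type of the initial model, pulled back to $\NNO$. The result is an actual map $\sigma : \NNO \to \mor \freeTop[r][\cE]$ picking, for each $n$, a section $s_n : 1 \to \bar n^*\mathquote{A_r}$ in $\freeTop[r][\cE]$.

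\emph{Step 3: externalise.} Apply the standard interpretation $\HOLinterp : \freeTop[r][\cE] \to \cE$ of \cref{prop:standard-interpretation-internal-r-topos}. By \cref{prop:extend-interp-to-internal-free}, $\HOLinterp \circ \mathquote{-}$ is (isomorphic to) the constant-indexed extension of $F$. Hence $\HOLinterp[\mathquote{A_r}]$ over $\NNO$ is $F(A_r) \to \NNO$, viewed in $\cE/\NNO$. The interpretation of the numeral family should be the generic element, i.e.\ the identity on $\NNO$; I would prove this by the NNO universal property, since both are maps built by recursion from $0$ and successor. The section $\sigma$, interpreted in the fibre over $\NNO$, then becomes a section $1 \to \id_\NNO^* F(A_r)$ in $\cE/\NNO$, that is, a splitting of $F(e_r) \cong e$. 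This identification of the interpreted numerals with the generic element is the second delicate point, after the enumerability splitting in Step 2.
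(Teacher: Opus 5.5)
Your proposal is correct and follows essentially the same route as the paper. The steps match: compactness down to some $\freeTop[r]$, internalisation, indexed projectivity of $1$ in $\freeTop[r][\cE]$ via internal Freyd gluing, pullback along the numeral map, a Gödel-minimal splitting by \cref{prop:enum-to-strenum-split-image}, and finally the standard interpretation, with the interpreted numerals identified with $\id_\NNO$ because each step is a $(0,S)$-algebra map.

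The one point to tighten is the enumerability justification. Your object of pairs $(n,s)$ is $\Numeral^*(\indhom[\cE]{{\freeTop[r][\cE]}}{}{1}{\mathquote{A}})$, and the numeral map lands in $\mor \freeTop[r][\cE]$, not $\ob \freeTop[r][\cE]$. This object is enumerable because the whole pullback square lifts to $\exlex{(\Pred \cE)}$, as the paper argues. Knowing only that the hom-object is a type of the initial model does not suffice, since enumerability is not evidently preserved by pulling back along $\Numeral$.
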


\begin{proof} 
  By \cref{prop:free-ranked-topos-equiv-free-topos}, it is equivalent to show this for the free $\infty$-ranked topos $\freeTop[\infty]$.
  By compactness (\cref{prop:free-inf-top-filtered-colim,cor:covers-in-filtered-colim}), any cover over $\NNO$ in $\freeTop[\infty]$ is the image of such a cover in $\freeTop[r]$ for some finite $r$.
  So it suffices to show: for any $r \in \N$, any cover $e : A \coverto \NNO$ in $\freeTop[r]$, and any topos $\cE$, $\HOLinterp[e]^\cE$ has a section in $\cE$.
  
  Given such $r$, $e$, $\cE$, consider the internalisation $\mathquote{e} : \mathquote{A} \coverto \mathquote{N}$ in $\freeTop[r][\cE]$; this is a cover since $\mathquote{-}$ is regular.
  The projectivity of $1$ in $\freeTop[r][\cE]$, \cref{thm:freyd-gluing-internal-r-topos}, tells us by way of \cref{prop:indexed-proj-equivalent-forms}\zcref[noname]{item:ind-proj-representable} that taking $\cE$-enriched global sections preserves covers;
  so $\freeTop[r][\cE](\mathquote{e}) : \indhom[\cE]{{\freeTop[r][\cE]}}{}{1}{\mathquote{A}} \coverto \indhom[\cE]{{\freeTop[r][\cE]}}{}{1}{\mathquote{N}}$ is a cover in $\cE$.
  Pulling this back along the numeral map $\Numeral : N \to \indhom[\cE]{{\freeTop[r][\cE]}}{}{1}{\mathquote{N}}$ gives the square
  \[
    \begin{tikzcd}
      \Numeral^*( \indhom[\cE]{{\freeTop[r][\cE]}}{}{1}{\mathquote{A}}) \ar[r] \ar[d,->>] \ar[dr,drpb]
      & \indhom[\cE]{{\freeTop[r][\cE]}}{}{1}{\mathquote{A}} \dar[->>] \\
      N \rar["\Numeral"]
      & \indhom[\cE]{{\freeTop[r][\cE]}}{}{1}{\mathquote{\NNO}}
    \end{tikzcd}
  \]
  The whole pullback lifts to $\exlex{(\Pred \cE)}$, so $\Numeral^*(\indhom[\cE]{{\freeTop[r][\cE]}}{}{1}{\mathquote{A}})$ is enumerable and we may take by \cref{prop:enum-to-strenum-split-image} some section of the left-hand map (picking the Gödel-number-minimal witnessing term);
  equivalently, a map $s : N \to \indhom[\cE]{{\freeTop[r][\cE]}}{}{1}{\mathquote{A}} $ over $\indhom[\cE]{{\freeTop[r][\cE]}}{}{1}{\mathquote{\NNO}} $.

  This then yields the desired section of $\HOLinterp[e] : \HOLinterp[A] \to N$ in $\cE$, just by applying the standard interpretation functor $\HOLinterp : \freeTop[r][\cE] \to \cE$ of \cref{prop:standard-interpretation-internal-r-topos} and the internalisation-interpretation isomorphism of \cref{prop:extend-interp-to-internal-free}:
  \[
    \begin{tikzcd}[column sep = small]
      & \indhom[\cE]{{\freeTop[r][\cE]}}{}{1}{\mathquote{A}} \ar[r] \ar[d,cover]
      & \indhom[\cE]{\cE}{1}{\HOLinterp[1]}{\HOLinterp[\mathquote{A}]}  \ar[r,"\iso"] \ar[d,cover]
      & \HOLinterp[\mathquote{A}] \ar[d,cover] \ar[r,"\iso"]
      & \HOLinterp[A] \ar[d,cover]
    \\
      N \ar[ur, "s", dashed] \rar
      & \indhom[\cE]{{\freeTop[r][\cE]}}{}{1}{\mathquote{N}} \rar
      & \indhom[\cE]{\cE}{1}{\HOLinterp[1]}{\HOLinterp[\mathquote{N}]} \rar["\iso"]
      & \HOLinterp[\mathquote{N}] \rar["\iso"]
      & N
    \end{tikzcd}
  \]
  The bottom composite is $\id_N$ since each step is an $(0,S)$-algebra map.
\end{proof}

\begin{corollary} \label{cor:ihol-rcc}
  Intuitionistic higher-order logic admits the rule of countable choice:
  \[ \inferrule*[right=$\RCC$]
    { \proves \lforall{n \of N} \lexists{x \of X} \varphi(n,x) }
    { \proves \lexists{f \of X^N} \lforall{n \of N} \varphi(n,f(n)) } 
  \]
  
  Indeed, if $\IHOL$ proves \enquote{$\lforall{n \of N} \lexists{x \of X} \varphi(n,x)$}, there is some $\IHOL$-definable function $f \of X^N$ for which $\IHOL$ proves \enquote{$\lforall{n \of N} \varphi(n,f(n))$}.
\end{corollary}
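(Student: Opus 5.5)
The plan is to deduce the corollary from \cref{thm:n-is-projective-in-f} by passing through the standard correspondence between $\IHOL$ and the free topos: $\freeTop$ may be taken to be the syntactic topos of $\IHOL$, whose objects are (up to isomorphism) comprehensions $\HOLinterp[x \of X \suchthat \psi(x)]$ and whose global elements of a type $Y$ are exactly the $\IHOL$-definable closed terms $t \of Y$ up to provable equality. This identification holds up to equivalence, and by \cref{prop:free-topos-is-bi-initial} any construction of the free topos serves equally well.

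First, given a proof of $\lforall{n \of N} \lexists{x \of X} \varphi(n,x)$, I form the object $A \defeq \HOLinterp[n \of N,\, x \of X \suchthat \varphi(n,x)]$, a subobject of $\NNO \times X$, together with its first projection $e : A \to \NNO$. The image of $e$ is the subobject $\HOLinterp[n \of N \suchthat \lexists{x}\varphi(n,x)]$, and since the premise is provable this image is all of $\NNO$; hence $e$ is a cover. Next, \cref{thm:n-is-projective-in-f} supplies a section $s : \NNO \to A$ of $e$. Composing with $A \injto \NNO \times X \to X$ gives $f : \NNO \to X$, which under the syntactic description is a definable term $f \of X^N$ (via exponential transpose, equivalently a definable functional relation). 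Because $s$ factors through $A$ and satisfies $es = \id$, the map $(\id, f) : \NNO \to \NNO \times X$ factors through $A$, and this says precisely that $\IHOL$ proves $\lforall{n \of N} \varphi(n, f(n))$. The displayed rule then follows by existential introduction on $f$.

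The main obstacle is not mathematical but the bookkeeping in the translation between proofs in $\IHOL$ and maps in $\freeTop$. I need soundness and completeness of the internal language for this particular free topos, so that a cover in $\freeTop$ corresponds to a provable existence statement and a map $\NNO \to X$ corresponds to a definable function. Concretely, this means checking that $\freeTop$ as constructed here (initial in $\strLog$) is equivalent to the Lambek–Scott syntactic topos of $\IHOL$, i.e.\ the term model. I would cite the standard result \cite{lambek-scott:intro} for this equivalence together with bi-initiality, rather than reprove it. A smaller point to handle with care is parameters: $X$ and $\varphi$ should be closed, with no free variables besides $n$ and $x$. If parameters were allowed, I would reduce to the closed case by universally quantifying over them, although that reduction would require choice over the parameter type, which goes beyond what the corollary claims.
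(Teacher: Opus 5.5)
Your proposal is correct and follows essentially the same route as the paper. You read provability of the premise as the projection $\HOLinterp[ n \of N,\, x \of X \suchthat \varphi(n,x) ] \to \NNO$ being a cover in $\freeTop$, split it using \cref{thm:n-is-projective-in-f}, and translate the section back via the Lambek--Scott correspondence into a definable $f$ with $\IHOL \proves \lforall{n \of N} \varphi(n,f(n))$; the paper cites exactly the cover/provability and conservativity results from Lambek--Scott for these two steps, and your caveat that parameters would require choice over the parameter type times $\NNO$ is also sound.
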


\begin{proof}
  Provability of $\lforall{n \of N} \lexists{x \of X} \varphi(n,x)$ amounts precisely to the projection $\pi_0 : \HOLinterp[ n\of N,\, x \of X \mid \varphi(n,x) ] \to N$ being a cover in $\freeTop$ \parencite[II.6.1(b)]{lambek-scott:intro}.

  Projectivity of $\NNO$ provides a section $s$ for $\pi_0$;
  but by conservativity of the interpretation \parencite[II.14.3]{lambek-scott:intro}, $s$ amounts precisely to a definable function $f : N \to X$ such that $\IHOL \proves \lforall{n \of N} \varphi(n,f(x))$, as required for $\RCC$.
\end{proof}

In fact, this may be strengthened to a rule of \emph{dependent} choice:
\[ \inferrule*[right=$\RDC$]
  { \proves \lexists{x \of X} \varphi(x) 
  \\ \proves \lforall{x \of X} \left( \varphi(x) \Imp \lexists{y \of X} \left( \varphi(y) \land \rho(x,y) \right) \right) }
  { \proves \lexists{f \of X^N} \lforall{n \of N} \left( \varphi(f(n)) \land \rho(f(n),f(n+1))\right) } 
\]

Categorically, this is most clearly formulated in terms of graphs.

\begin{definition} \leavevmode \nopagebreak
  \begin{enumerate}
  \item A \defemph{graph} $\smG$ in a category $\cE$ is just a parallel pair $s, t : G_1 \parpair{}{} G_0$.

  \item A graph is \defemph{simple} if $s,t$ are jointly monic, and (assuming $\cE$ regular) \defemph{total} if $s : G_1 \coverto G_0$ and $G_0 \coverto 1$ are covers.

  \item Assuming a natural numbers object $(\NNO,0,S)$ in $\cE$, a \defemph{branch} in $\smG$ is a pair of maps $f_0 : \NNO \to G_0$, $f_1 : \NNO \to G_1$ with $s f_1 = f_0$, $t f_1 = f_0 S$.
  \end{enumerate}
\end{definition}

When $\cE$ interprets sufficient logic, the premises of $\RDC$ precisely describe a simple total graph in $\cE$
\[ \HOLinterp[ x,y \of X \suchthat \varphi(x) \land \varphi(y) \land \rho(x,y)] \ \parpair[2em]{}[][cover,yshift=0.2ex]{}[][yshift=-0.2ex] \ 
\HOLinterp[ x \of X \suchthat \varphi(x) ] \coverto[\quad] 1, \] 
and its conclusion asserts (internal) existence of a branch.
As with $\RCC$, we get the slightly stronger conclusion (equivalent since $1$ is projective) of external/global existence:

\begin{theorem} \label{thm:total-graph-branch-in-f}
  Every total graph in $\freeTop$ has a branch; so $\freeTop$ validates $\RDC$.
\end{theorem}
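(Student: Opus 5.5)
We will follow the proof of \cref{thm:n-is-projective-in-f} closely, replacing the cover over $\NNO$ by a total graph and the pointwise choice of witnesses by a recursively defined choice of successive witnesses. First, by \cref{prop:free-ranked-topos-equiv-free-topos} we may work in $\freeTop[\infty]$. A total graph consists of finitely many objects and maps together with two covers, so by compactness (\cref{prop:free-inf-top-filtered-colim,cor:covers-in-filtered-colim}, applied to the covers $s$ and $G_0 \coverto 1$) it is the image of a total graph $\smG$ in some $\freeTop[r]$ with $r$ finite. As in the $\NNO$ case, it then suffices to show that for every topos $\cE$ the interpretation $\HOLinterp[\smG]^\cE$ has a branch. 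Applied with $\cE = \freeTop$, this gives the branch in $\freeTop$. The $\RDC$ consequence follows as in \cref{cor:ihol-rcc}: the branch is a definable $f$, and the existential conclusion holds because $1$ is projective.

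Fix such $r$, $\smG$ and $\cE$, and internalise to obtain $\mathquote{\smG}$ in $\freeTop[r][\cE]$. This is still total, since $\mathquote{-}$ is regular. Write $\Gamma$ for the enriched global sections $\indhom[\cE]{{\freeTop[r][\cE]}}{}{1}{-}$. By \cref{thm:freyd-gluing-internal-r-topos} and \cref{prop:indexed-proj-equivalent-forms}\zcref[noname]{item:ind-proj-representable}, $\Gamma$ preserves covers, so $\Gamma\mathquote{G_0} \coverto 1$ and $\Gamma(\mathquote{s}) : \Gamma\mathquote{G_1} \coverto \Gamma\mathquote{G_0}$ are covers in $\cE$. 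Next we use enumerability. By \cref{cor:au-godel-numbering}, together with the lifting of the relevant objects to $\exlex{(\Pred \cE)}$ that was used in \cref{thm:n-is-projective-in-f}, the objects $\Gamma\mathquote{G_0}$ and $\Gamma\mathquote{G_1}$ are enumerable: they are objects of hom-sets of the internal free $r$-topos. \Cref{prop:enum-to-strenum-split-image} then requires the target to have enumerable diagonal, which is exactly what the Caveat warns we may lack. To avoid this, we plan to split the cover $\Gamma(\mathquote{s})$ using \cref{prop:enum-map-split-image} instead. The map $\Gamma(\mathquote{s})$ is a map between objects lifting to $\exlex{(\Pred \cE)}$, so we will argue that it is covered by an enumerable map: realise the lifted map and compose with the canonical cover from a strictly enumerable object. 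It then has split image, and being a cover it splits by \cref{prop:split-image-facts}\zcref[noname]{item:split-image-cover-splits}. If this fails, the fallback is to work with the term-level presentations, where we choose minimal Gödel-numbered witnesses on strictly enumerable representatives and then pass to the quotient. The same reasoning gives a global element $g_0 : 1 \to \Gamma\mathquote{G_0}$.

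With a section $\sigma$ of $\Gamma(\mathquote{s})$ and a point $g_0$ in hand, the natural numbers object of $\cE$ gives, by primitive recursion, $f_0 : \NNO \to \Gamma\mathquote{G_0}$ with $f_0(0) = g_0$ and $f_0 S = \Gamma(\mathquote{t})\,\sigma f_0$. Setting $f_1 \defeq \sigma f_0$ gives $\Gamma(\mathquote{s}) f_1 = f_0$ and $\Gamma(\mathquote{t}) f_1 = f_0 S$. This is a branch of the graph $\Gamma\mathquote{\smG}$ in $\cE$. Finally, we apply the standard interpretation of \cref{prop:standard-interpretation-internal-r-topos}. Its action on hom-objects, $\Gamma\mathquote{X} \to \indhom[\cE]{\cE}{1}{1}{\HOLinterp[\mathquote{X}]} \iso \HOLinterp[X]$, uses the isomorphism of \cref{prop:extend-interp-to-internal-free}, and it commutes with $s$ and $t$ by naturality. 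It therefore transports $(f_0, f_1)$ to a branch of $\HOLinterp[\smG]^\cE$. This step is simpler than the final step for $\NNO$, because here the index is the ambient $\NNO$ itself and no numeral map is involved.

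The main obstacle is the splitting step. We must obtain a genuine section of $\Gamma(\mathquote{s})$ over all of $\Gamma\mathquote{G_0}$, not just over numerals, and the target's diagonal may fail to be enumerable. I expect this to be resolved by the ``covered by an enumerable map'' argument through \cref{prop:split-image-facts}\zcref[noname]{item:split-image-if-covered}.
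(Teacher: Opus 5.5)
There is a genuine gap at the splitting step, and it cannot be closed the way you suggest. Your intermediate goal, a section of $\Gamma(\mathquote{s})$ over all of $\Gamma\mathquote{G_0}$, is false in general.

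First, the proposed justification does not work. Composing $\Gamma(\mathquote{s})$ with the canonical cover $e : \bar B \coverto \Gamma\mathquote{G_1}$ makes the \emph{domain} strictly enumerable. But enumerability of the \emph{map} $\Gamma(\mathquote{s})\,e$ means enumerability in the slice over $\Gamma\mathquote{G_0}$, i.e.\ a cover from a complemented subobject of $\NNO \times \Gamma\mathquote{G_0}$. The embedding of $\bar B$ there via $(\iota, \Gamma(\mathquote{s})e)$ is complemented only if equations in $\Gamma\mathquote{G_0}$ are decidable. That is precisely the enumerable-diagonal hypothesis of \cref{prop:enum-to-enum-diag-enum}, which the Caveat after \cref{cor:au-godel-numbering} warns is unavailable. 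If your argument worked, it would split every cover out of an enumerable object. In particular it would split the canonical quotient $\Gbar_0 \coverto \Gamma\mathquote{G_0}$, making $\Gamma\mathquote{G_0}$ a subobject of $\NNO$ with decidable equality.

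Second, no argument can produce such a section in general. Take $G_1 = \NNO$ and $G_0 = \NNO/R$, where $R(n,m) \defeq (n = m) \lor (\theta(n) \land \theta(m))$ and $\theta$ is a $\Sigma_1$ formula defining the halting set $K$. Let $s = t$ be the quotient map; this is a total graph once $r$ is large enough. Since $\Gamma\mathquote{\NNO} \iso \NNO$ via numerals, a section of $\Gamma(\mathquote{s})$ would make its kernel pair a complemented subobject of $\NNO \times \NNO$. That kernel pair is the relation \enquote{$R(\bar n,\bar m)$ is internally provable}. In the effective topos this relation is $\{(n,m) \mid n = m \text{ or } n,m \in K\}$, which is not recursive.

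Your fallback fails for the same reason. Minimal witnesses chosen on representatives do not respect the equivalence relation, so the resulting choice function does not descend to the quotient.

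The missing idea is that a branch is indexed only by $\NNO$, so you never need a section downstairs. You can carry representatives through the recursion and pass only the finished branch to the quotient. This is what the paper does:
\begin{enumerate}
  \item It pulls $\Gamma\mathquote{G_1}$ back along $p \times p$, where $p : \Gbar_0 \coverto \Gamma\mathquote{G_0}$ is the canonical cover by a strictly enumerable object.
  \item This gives a graph $\smGbar$ that is still total, with $\Gbar_0$ strictly enumerable and $\Gbar_1$ enumerable, because the pullback lifts to $\exlex{(\Pred \cE)}$.
  \item Now \cref{prop:enum-to-strenum-split-image} genuinely applies to $\bar s : \Gbar_1 \coverto \Gbar_0$ and to $\Gbar_0 \coverto 1$.
  \item Your recursion is run in $\smGbar$. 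The resulting branch is pushed along the graph map $\smGbar \to \Gamma\mathquote{\smG}$, and then through the standard interpretation exactly as in your final step.
\end{enumerate}
The rest of your outline matches the paper: compactness, internalisation, preservation of covers by $\Gamma$, and the transport to $\HOLinterp[\smG][\cE]$.
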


\begin{proof}
  By compactness as in \cref{thm:n-is-projective-in-f}, any total graph in $\freeTop$ lifts to some $\freeTop[r]$,
  so it suffices to show: for any $r \in \N$, any total graph $\smG$ in $\freeTop[r]$, and any topos $\cE$, $\HOLinterp[ \smG ]^\cE$ has a branch in $\cE$.

  Again, we next internalise $\smG$ to a total graph $\mathquote{\smG}$ in $\freeTop[r][\cE]$.
  Taking ($\cE$-enriched) global sections, the resulting graph $\indhom[\cE]{{\freeTop[r][\cE]}}{}{1}{\mathquote{\smG}}$ is still total by $\cE$-indexed projectivity of $1$ in $\freeTop[r][\cE]$.
  
  We know $\indhom[\cE]{{\freeTop[r][\cE]}}{}{1}{\mathquote{\smG}}$ lifts to $\exlex{(\Pred \cE)}$, so denoting its canonical cover by $p : \Gbar_0 \coverto \indhom[\cE]{{\freeTop[r][\cE]}}{}{1}{\mathquote{G_0}}$, we can pull the arrows $\indhom[\cE]{{\freeTop[r][\cE]}}{}{1}{\mathquote{G_1}}$ back along $p \times p$ to obtain a graph $\smGbar$, again total and with $\Gbar_1$ enumerable, but now also with $\Gbar_0$ (the object of \enquote{terms of type $\mathquote{G_0}$}) strictly enumerable.
  
  The covers $\overline{s} : \Gbar_1 \coverto \Gbar_0$ and $\Gbar_0 \coverto 1$ now admit by \cref{prop:enum-to-strenum-split-image} some splittings $k$, $a$;
  these give a branch in $\smGbar$ 
  (with $f_0 : \NNO \to \Gbar_0$ specified by $f_0 0 = a$, $f_0 S = \bar{t}kf_0$),
  and mapping this across through the internal standard interpretation gives the required branch in $\HOLinterp[\smG][\cE]$:
  \[
    \begin{tikzcd}[baseline=(\tikzcdmatrixname-3-1.base)]
        \;\! \Gbar_1\!\!\: \ar[dr,drpb] \ar[r] \ar[d,cover,"\bar{s}" description] \ar[d,shift left=2,"\bar{t}"]
      & \indhom[\cE]{{\freeTop[r][\cE]}}{}{1}{\mathquote{G_1}} \ar[d,cover,shift right] \ar[d,shift left] \ar[r,"\HOLinterp"]
      & \indhom[\cE]{\cE}{}{1}{\HOLinterp[\mathquote{G_1}]} \ar[r,"\iso"] \ar[d,cover,shift right] \ar[d,shift left]
      & \HOLinterp[G_1][\cE]  \ar[d,cover,shift right,"s"'] \ar[d,shift left,"t"]
      \\ \, \Gbar_0\! \ar[r,cover] \ar[d,cover] \ar[u,bend left,dashed,"k"]
      & \indhom[\cE]{{\freeTop[r][\cE]}}{}{1}{\mathquote{G_0}} \ar[d,cover] \ar[r,"\HOLinterp"]
      & \indhom[\cE]{\cE}{}{1}{\HOLinterp[\mathquote{G_0}]} \ar[r,"\iso"] \ar[d,cover]
      & \HOLinterp[G_0][\cE]  \ar[d,cover]
      \\ 1 \ar[u,bend left,dashed,"a"]
      & 1
      & 1
      & 1
    \end{tikzcd}
    \qedhere \]
\end{proof}

\begin{corollary} \label{cor:ihol-rdc}
  The rule of dependent choice $\RDC$ is admissible for $\IHOL$.
  Indeed, whenever $\IHOL$ proves \enquote{$\lexists{x \of X} \varphi(x)$} and \enquote{$\lforall{x \of X} ( \varphi(x) \Imp \allowbreak \lexists{y \of X} \allowbreak ( \varphi(y) \land \rho(x,y) ))$}, there is some $\IHOL$-definable function $f : X^\NNO$ for which $\IHOL$ proves \enquote{$\lforall{n \of N} \allowbreak \left( \varphi(f(n)) \land \rho(f(n),f(n+1))\right)$}. \qed
\end{corollary}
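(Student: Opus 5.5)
The plan is to derive this from \cref{thm:total-graph-branch-in-f} exactly as \cref{cor:ihol-rcc} was derived from \cref{thm:n-is-projective-in-f}, using the soundness/completeness and conservativity of the interpretation of $\IHOL$ in $\freeTop$ \parencite[II.6.1, II.14.3]{lambek-scott:intro}.

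First, set $G_0 \defeq \HOLinterp[ x \of X \suchthat \varphi(x) ]$ and $G_1 \defeq \HOLinterp[ x,y \of X \suchthat \varphi(x) \land \varphi(y) \land \rho(x,y)]$ in $\freeTop$, with $s,t$ the two projections. This is a simple graph, and I will check that it is total. The premise $\proves \lexists{x \of X}\varphi(x)$ says precisely that $G_0 \to 1$ is a cover. The second premise $\proves \lforall{x \of X}(\varphi(x) \Imp \lexists{y}(\varphi(y)\land\rho(x,y)))$ says that the image of $s : G_1 \to G_0$ is all of $G_0$, so $s$ is a cover. Both translations are the standard reading of provable existentials as covers, as in the proof of \cref{cor:ihol-rcc}.

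Second, \cref{thm:total-graph-branch-in-f} supplies a branch $f_0 : \NNO \to G_0$, $f_1 : \NNO \to G_1$ with $s f_1 = f_0$ and $t f_1 = f_0 S$. Composing $f_0$ with the inclusion $G_0 \injto X$ gives a map $\NNO \to X$. By conservativity of the interpretation, this map is (the denotation of) an $\IHOL$-definable function $f \of X^\NNO$. The factorisation of $f_0$ through $G_0$ yields $\IHOL \proves \lforall{n}\varphi(f(n))$. The map $f_1$ into $G_1$, together with the equations $sf_1 = f_0$ and $tf_1 = f_0S$, shows that $(f(n), f(n+1))$ lands in $G_1$, so $\IHOL \proves \lforall{n} \rho(f(n),f(n+1))$. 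Since $1$ is projective, the global element $f$ also witnesses the internal existential conclusion of $\RDC$.

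There is no real obstacle here. The only point needing care is the faithful translation between provability and the categorical statements: an existential is provable if and only if a cover exists, and global maps correspond to definable terms with provable properties. Both are standard facts about the free topos as the syntactic category of $\IHOL$.
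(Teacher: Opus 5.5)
Your proposal is correct and follows the route the paper intends: the corollary is stated without a separate proof, as a direct consequence of \cref{thm:total-graph-branch-in-f}, using the same translation as in \cref{cor:ihol-rcc} (the premises of $\RDC$ give a simple total graph in $\freeTop$, and the branch is read back as a definable function by conservativity). One small correction: passing from the global element $f$ to the internal existential conclusion needs only existential introduction, whereas projectivity of $1$ is needed for the converse, which is why the paper calls the global form \enquote{equivalent}.
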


\subsection{Comparison with Makkai’s presentation} \label{sec:comparison-makkai}

The present development differs in several respects from the treatment in Makkai’s manuscript \cite{makkai:n-is-projective}.

Most significantly, Makkai makes use throughout of the correspondence between $\IHOL$ (presented in a traditional syntax) and elementary toposes, while we work purely categorically, referring to $\IHOL$ occasionally for intuition, but never formally until the final \cref{cor:ihol-rcc}.
For instance, Makkai defines an $r$-topos as a Heyting category with subobject classifier that admits interpretations of all $\IHOL$-types of rank $\leq r$: he ranks the syntax, not its categorical interpretation.
Similarly, he presents free ($r$\nobreakdash-)toposes using the syntax of $\IHOL$ (along similar lines to \cite[\textsection \textsection 11--12]{lambek-scott:intro}), and their internalisations using the Gödel-number encoding of this syntax.
By contrast, we avoid encoding details by appealing to the general existence of free structures in arithmetic universes,
and correspondingly using a finitely-presented essentially algebraic notion of $r$-topos.

A more inessential difference, but perhaps of interest, is that where we have used indexed/fibred categories to mediate between external and internal categories, Makkai uses \emph{enriched} categories instead.

\subsection{History and related work} \label{sec:history}

The early history of work on $\RCC$ is somewhat murky, with more folklore than published literature; the following is the best understanding we have been able to reach, based in part on correspondence with Phil Scott and conversations with Michael Makkai.
For now we survey this just in outline, deferring discussion of technicalities to \cref{sec:comparison-approaches}

The first proof of admissibility of $\RCC$ was for second-order arithmetic ($\HAS$), in Troelstra \cite[Thm.~4.5.12]{troelstra:metamathematical-investigation},
couched in traditional proof-theoretic language
(and with a technical error corrected by Hayashi \cite[App.~3]{hayashi:on-derived-rules}).%
\footnote{This erratum merits a little explanation, as the descriptions in \cite{hayashi:on-derived-rules} and \cite{troelstra-2018:corrections} are slightly obscure: they strengthen Lemma~4.5.8 of \cite{troelstra:metamathematical-investigation}, but do not mention where the strengthening is required.
The issue is that Lemmas~4.5.8 and 4.5.9 can each be stated in two forms: with an external quantification over actual formulas, or an internal quantification over (Gödel-codes of) formulas in $\HAS$.
\cite{troelstra:metamathematical-investigation} gives both in their external forms, which indeed suffice for subsequent applications there.
However, the proof of Lemma~4.5.9 involves an internal induction over (codes of) proofs, so needs its statement to range over arbitrary internal formulas, and thus requires the stronger, internal version of Lemma~4.5.8.}

Following the splash of Freyd’s elegant categorical proof of $\EP$ and $\DP$ for toposes, there was a general interest in extending categorical methods to other proof-theoretic results;
in particular, proposals for adapting $\RCC$ to $\IHOL$ were presented at least by Joyal, Makkai, and Friedman and Scedrov.
However, this work of Joyal and Makkai never made it into print, while that of Friedman and Scedrov appeared as \cite{friedman-scedrov:set-existence-property}, including several related results but not $\RCC$.

On the non-categorical side, it has been suggested to us that $\RCC$ for some second- or higher-order system appears in work of Hayashi, but we have been unable to locate it there.
\Cite{hayashi:on-derived-rules} builds on and corrects the methods of \cite[\textsection 4.5]{troelstra:metamathematical-investigation}, giving various derived rules for a variant of $\HAS$, using approaches somewhat analogous to those of the present proof (formalising $\EP$ and satisfaction for fragments of $\HAS$ within $\HAS$), and outlining how to adapt the results to a higher-order system; but it does not consider $\RCC$ or similar principles.
It thus seems likely to us that Hayashi, too, may have presented a proof of $\RCC$ during this period that was never published.

Subsequent work using the technique of \emph{realizability with truth} has shown admissibility of  the full rule of choice for various constructive systems, mainly $\HAomega$ and close variants \cite[Theorem 3.8(iv)]{troelstra:realizability}, including just the \emph{finite} types (that is, iterated exponentials of $\NNO$).
Recent work of Frittaion, Nemoto, and Rathjen \cite{frittaion-nemoto-rathjen:choice-and-independence-rules} extends this method to constructive set theories, but retains the type restriction in the rule considered: Thm.~8.2 there shows that a range of theories including CZF and IZF admit the \enquote{rule of choice at finite types}:
\[ \inferrule*[right=$\RCFT$,left=\textnormal{($\sigma$, $\tau$ finite types)}]
  { \proves \lforall{n \of \sigma} \lexists{x \of \tau} \varphi(n,x) }
  { \proves \lexists{f \of \tau^\sigma} \lforall{n \of N} \varphi(n,f(n)) } 
\]
These results are thus neither more or less general than $\RCC$ for systems with power-types/-sets: the domain type of the choice is more general, but the target type is restricted, and in particular cannot involve power-types.

\subsection{Comparison with proof-theoretic presentations} \label{sec:comparison-approaches}
In the introduction we outlined the argument in terms of $\IHOL$; then we actually proved it entirely categorically, and extracted the $\IHOL$ version as a corollary.
As usual, the categorical treatment could be compiled down
into a more traditionally proof-theoretic treatment working syntactically in
$\IHOL$, along the lines of the introductory sketch. 

In such a development, our reflection theorems (\cref{sec:reflection-results}) would turn out essentially like Lemmas 4.5.8–9 of \cite{troelstra:metamathematical-investigation}, and our endgame much like the proof of Thm.~4.5.12 there (all adapted from $\HAS$ to $\HAH$/$\IHOL$).
The most substantial difference from Troelstra/Hayashi’s approach is that they obtain $\EP$ by normalisation techniques, where we used Freyd’s gluing argument.
As shown in Scedrov and Scott \cite{scedrov-scott:friedman-slash-freyd-covers}, Freyd’s gluing topos corresponds on the syntactic side to a variant of the Kleene slash due to Friedman%
\footnote{In fact many normalisation arguments can also be fruitfully analysed as gluing constructions \cite{fiore:semantic-analysis-of-nbe}, as can realizability with truth \cite[\textsection 4.6.1]{van-oosten:realizability}.}
; our proof thus corresponds to suitable internalisations of the Friedman slash for fragments of $\IHOL$/$\HAH$.

How does such a treatment compare with the present version?
Purely proof-theoretic approaches can be more elementary, and in many respects intuitively and expositorily simpler.
However, the bureaucratic encoding required for internalisation of syntax, and the subsequent internalisation of arguments, is typically given by hand just as required — hard to write and worse to read, so almost always suppressed. 
As Troelstra writes \cite[4.5.12]{troelstra:metamathematical-investigation}: \enquote{We sketch the argument: full details are long and tedious, and hardly instructive.}

Categorical presentations cannot fully avoid these encoding details, but they allow
them to be organised more uniformly, and given in clearer generality.
So the tedious work can be done once and for all, with a precise and broad scope of applicability.

Overall, the picture is a common one:
The categorical framework entails some extra overhead, not so much of technical work as of intuition. In return, however, it permits a more modular presentation, with more of the
work packaged into general, broadly applicable results, and less ad hoc
calculation.

Most fruitful of all, however, is the flexibility to pass freely back and forth, with the categorical framework to handle higher-level structural results, but dropping back into syntax for more elementary work as convenient, as we did for the low-level constructions of \cref{sec:universes,sec:internal-free-models}.

\begin{private}
  \addtocontents{toc}{\protect\setcounter{tocdepth}{1}}
  \appendix

  \input{appendix-literature.tex}

\section{Overview of notations}

\comment{Primarily for private reference — probably delete or suppress before publication.}

Here we lay out, for reference and overview, the notational conventions we aim to follow in the paper.

\begin{itemize}
\item $2$-categories of large categories: $\CAT$, $\REG$, $\LOG$, $\LOG[r]$; of indexed cats, $\CAT[\cB]$, etc.
\item $2$-categories of small categories: $\Cat$, $\Log$, $\Log[r]$; of indexed cats, $\Cat[\cE]$ etc.; of internal cats, $\intCat[\cE]$ etc.
\item Specific large ($1$-)categories, e.g. of small or internal algebraic structures: $\strCat[\cE]$, $\strLog[r]$, $\Mod[\cE]{\thT}$
\item Arbitrary potentially-large categories: $\cE$ (typically when logically-structured), $\cC$
\item Specific logical free/syntactic categories (small, but nonetheless important citizens of the world of possibly-large categories): $\freeTop[r][\cE]$, $\syncat{\thT}$
\item Arbitrary small categories: $\smE$ (esp.\ if logical), $\smC$
\item Arbitrary internal categories: $\smE$ (esp.\ if logical), $\smC$
\end{itemize}

\comment{The Elephant uses fraktur for all $2$-categories. We roughly follow this, but use calligraphic instead: both are a bit ugly, but Fraktur in most fonts is less readable.}

Interpretations of logical languages in categories:
\begin{itemize}
\item $\IHOL$ (or similar) in a topos: $\HOLinterp[ X^\NNO ][\cE]$.
\item Internal language of an AU: $\AUinterp[ \NNO \times \NNO ][\cE]$. \comment{Should we distinguish this notationally from the previous? We use this fairly informally/flexibly, whereas that one is a quite formal item.}
\item EAT in a model: $\EATinterp[ X ][M]$.
\end{itemize}

Indexed hom-objects: $\indhom{\cE}{X}{A}{B}$.

\comment{Alternatives:
$\indhomwithHom{\cE}{\indC}{X}{A}{B}$, or 
$\indhomwithunderline{\cE}{\indC}{X}{A}{B}$, or 
$\indhomwithcatsubscript{\cE}{\indC}{X}{A}{B}$, or 
$\indhomwithboldparens{\cE}{\indC}{X}{A}{B}$…}

\begin{longcomment}
  Arrows testing: $A \to B$, $A \rightarrow B$, $A \injto B$, $A \hookrightarrow B$, $A \coverto B$ $A \twoheadrightarrow B$.

  Parallel pairs:
  $ A \parpair[2em]{s}[xshift=-0.2em][cover]{t} B$,
  $ A \parpair{}{} B$

  \filbreak 
  Check how inline arrows affect line spacing: testing paragraph and enough test to fill out a line of words text filling out the line like a long and rather stupid sentence.  Now some text again to get onto the next line and then $A \to B$ let’s see how it raises the line height compared to the other lines, and what about the depth as well.
  Now what about if we put a parallel pair in in the style above:
  $A \parpair{s}{t} B$
 what does that look like?  Good; not too much effect on the line spacing now!

\end{longcomment}

\begin{longcomment}
  Fonts testing: \newcommand{\testcatnames}[1]{$#1{CAT}$, $#1{Cat}$, $#1{REG}$, $#1{Reg}$, $#1{LOG}$, $#1{Log}$}
  \begin{itemize}
  \item \testcatnames{\mathbf}
  \item \testcatnames{\mathscr}
  \item \testcatnames{\mathcalit}
  \item \testcatnames{\mathbfcalit}
  \item \testcatnames{\mathfrak}
  \end{itemize}

  Testing scale-matching between fonts/alphabets: \\
  \begingroup
  \noindent
  \rlap{\color{red}\vrule width\textwidth height.15pt depth.15pt}%
  \rlap{\color{red}\vrule width\textwidth
           height\dimexpr1ex+.15pt\relax depth-\dimexpr1ex-.15pt\relax}%
  \sbox0{A}%
  \rlap{\color{red}\vrule width\textwidth
           height\dimexpr\ht0+.15pt\relax depth-\dimexpr\ht0-.15pt\relax}%
  \newcommand{\testletters}{ABQabxgf}
  \testletters%
  $\testletters$%
  $\mathcalit{ABQabxgf}$%
  $\mathfrak{\testletters}$
  \textsf{\testletters}%
  \texttt{\testletters}%
  \textsc{\testletters}%
  \endgroup

  Kerning testing, for mathcal category names:
  \newcommand{\testkernrow}[1]{
    #1\textrm{mu}
    & \mathcalit{C{\mkern#1mu} AT}
    & \mathcalit{C{\mkern#1mu} at}
    & \mathcalit{L{\mkern#1mu} og}
    & \mathcalit{R{\mkern#1mu} eg}
    & \mathcalit{L{\mkern#1mu} ex}
    & \CAT{\mkern#1mu}_{\cE}
    & \mathcalit{LE{\mkern#1mu}XT}
    & \mathcalit{PRET{\mkern#1mu}OP}
  }
  \[ \begin{array}{rccccccccc}
    \testkernrow{0} \\
    \testkernrow{-0.5} \\
    \testkernrow{-1} \\
    \testkernrow{-1.5} \\
    \testkernrow{-2} \\
    \testkernrow{-2.5} \\
    \testkernrow{-3} \\
    \testkernrow{-3.5} \\
    \testkernrow{-4} \\
    \hline \text{best} & 1.5? & 2? & 2? & 2? & 2? & -3.5? & -1? & -3? \\
     & \CAT & \Cat & \Log & \Reg & \Lex & \CAT[\cE] & \LEXT & \PRETOP
  \end{array}\]
\end{longcomment}

\end{private}


\printbibliography

\end{document}